\newtheorem{theorem}{Theorem}[section]
\newtheorem{lemma}[theorem]{Lemma}
\newtheorem{propn}[theorem]{Proposition}
\newtheorem{cor}[theorem]{Corollary}
\theoremstyle{remark}
\newtheorem{remark}[theorem]{Remark}
\numberwithin{equation}{section}
\begin{document}

\title{Scaling limit for Brownian motions \\ on the $l$-level Sierpinski gaskets: \\ The fractal to Euclidean crossover}
\author{David A. Croydon\footnote{\tiny{Research Institute for Mathematical Sciences, Kyoto University, Kyoto 606--8502, Japan; croydon@kurims.kyoto-u.ac.jp.}}, Ben Hambly\footnote{\tiny{Mathematical Institute, University of Oxford, United Kingdom;  hambly@maths.ox.ac.uk.}}, Takashi Kumagai\footnote{\tiny{Department of Mathematics, Waseda University, 3-4-1 Okubo, Shinjuku-ku, Tokyo 169-8555, Japan; t-kumagai@waseda.jp.}}}
\maketitle

\begin{abstract}
In two dimensions, the $l$-level Sierpinski gasket $\mathrm{SG}(l)$ is obtained by splitting an equilateral triangle into a collection of $l^2$ equilateral triangles of equal size and with the same total area, retaining only the $l(l+1)/2$ triangles with the same orientation as the original triangle, and then iterating this procedure indefinitely. We show that the canonical diffusions on the spaces $\mathrm{SG}(l)$, $l\geq2$, can be rescaled to yield Brownian motion on the initial triangle. Our argument also applies to the analogous higher-dimensional Sierpinski gaskets. Moreover, we prove a local central limit theorem for the associated transition densities. Key to this is the derivation of a Poincar\'{e} inequality, in the proof of which we exploit the Euclidean-type mixing that occurs between the bottlenecks present at each scale of the fractal. \\
{\bf Keywords:} Sierpinski gasket, fractal, Brownian motion, scaling limit, transition density.\\
{\bf MSC2020:} 28A80 (primary), 60J25, 60J35, 60K50.
\end{abstract}

\section{Introduction}

It is now well known that heat flow on fractal spaces, such as Sierpinski gaskets, has clear differences to that on classical Euclidean space. Indeed, unlike the diffusive behaviour of Brownian motion on the latter spaces, the canonical diffusions on many fractals have been shown to be sub-diffusive, with walk dimension strictly larger than 2. (We recall that the walk dimension is the polynomial exponent governing the space-time scaling of the process in question, see \eqref{walkdim} below for a precise definition in our setting.) In the physics literature, see \cite{MSS} in particular, the question was raised as to how the crossover from `fractal' to `Euclidean' behaviour occurs when one considers a sequence of fractals increasingly well-approximating Euclidean space. Establishing rigourously a result conjectured in \cite{MSS}, in \cite{HK}, this question was studied at the level of certain exponents describing the laws of the related stochastic processes. It was established that these exponents, which relate to the short-time, microscopic behaviour of the relevant diffusions, retain the fractal aspects of the model, even asymptotically. (We expand on this discussion in Theorem \ref{fobw} and Remarks \ref{HKcomp} and \ref{rem2} below.) In this article, we consider the same issue, but at the level of a functional scaling limit. In contrast to the story told in \cite{HK} and \cite{MSS}, we demonstrate that for a certain sequence of fractal spaces and a certain scaling regime, the fractality of the spaces is no longer preserved in the limit. Rather, we see that the associated sub-diffusive processes have as a scaling limit a standard Brownian motion on Euclidean space (restricted to lie within a suitable domain). Additionally, we present a corresponding result for the transition densities of the processes in question.

To describe the results in \cite{HK} and our main results, let us start by introducing some key notation; full definitions are postponed to Section \ref{prelimsec}. We will work in $d$-dimensional Euclidean space, $d\geq 2$, but usually suppress $d$ from the notation as this parameter does not vary. First, let $V_0$ be the vertices of a $d$-dimensional regular tetrahedron $\Delta$ of side length 1, one of the vertices of which is $0$. For each integer $l\geq 2$, we set $\mathrm{SG}(l)$ to be the $d$-dimensional $l$-level Sierpinski gasket with boundary $V_0$ (see Figure \ref{sglfig}), and suppose that $X^l=(X^l_t)_{t\geq0}$ is the canonical diffusion, i.e.\ Brownian motion, on this space, started from 0; the time-scaling of $X^l$ will be fixed so that the expected commute time between 0 and each other vertex of $\Delta$ is equal to 1. For each $l,d\geq 2$, the following limit, called the walk dimension, is known to exist:
\begin{equation}\label{walkdim}
d_w(l):=\left(\lim_{t\rightarrow0}\frac{\log \mathbf{E}\left(\left|X^l_t\right|^2\right)}{2\log t}\right)^{-1};
\end{equation}
moreover, it has been established that $d_w(l)\in (2,\infty)$, see \cite[Proposition 5.3]{KajMur}, for example. Additionally, each of the processes $X^l$ admits a transition density (or heat kernel) with respect to the canonical flat probability measure on the underlying space, $(p^l_t(x,y))_{x,y\in \mathrm{SG}(l),\:t>0}$ say. The spectral dimension is defined to be the exponent governing the asymptotic scaling in the eigenvalues and it typically determines
the behaviour of the on-diagonal part of the heat kernel at short times. In particular, for our purposes we can define
\[d_s(l):=\lim_{t\rightarrow0}\frac{2\log p^l_t(0,0)}{\log t}.\]
It is known that the spectral dimension exists for Brownian motion on $\mathrm{SG}(l)$. The following is the main result of \cite{HK}.

\begin{figure}[t]
  \centering
  \includegraphics[width=0.3\textwidth]{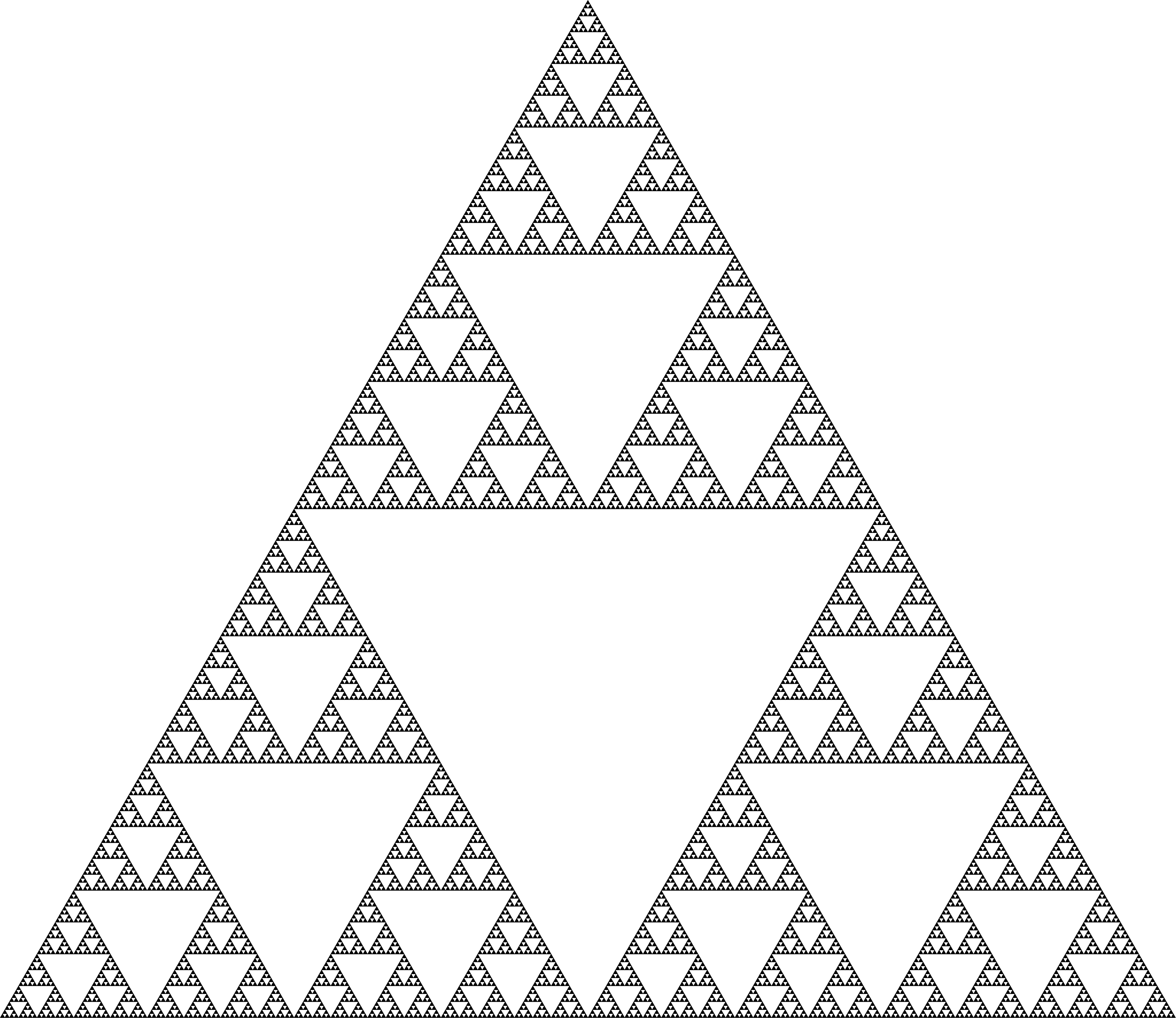}\hspace{5pt}\includegraphics[width=0.3\textwidth]{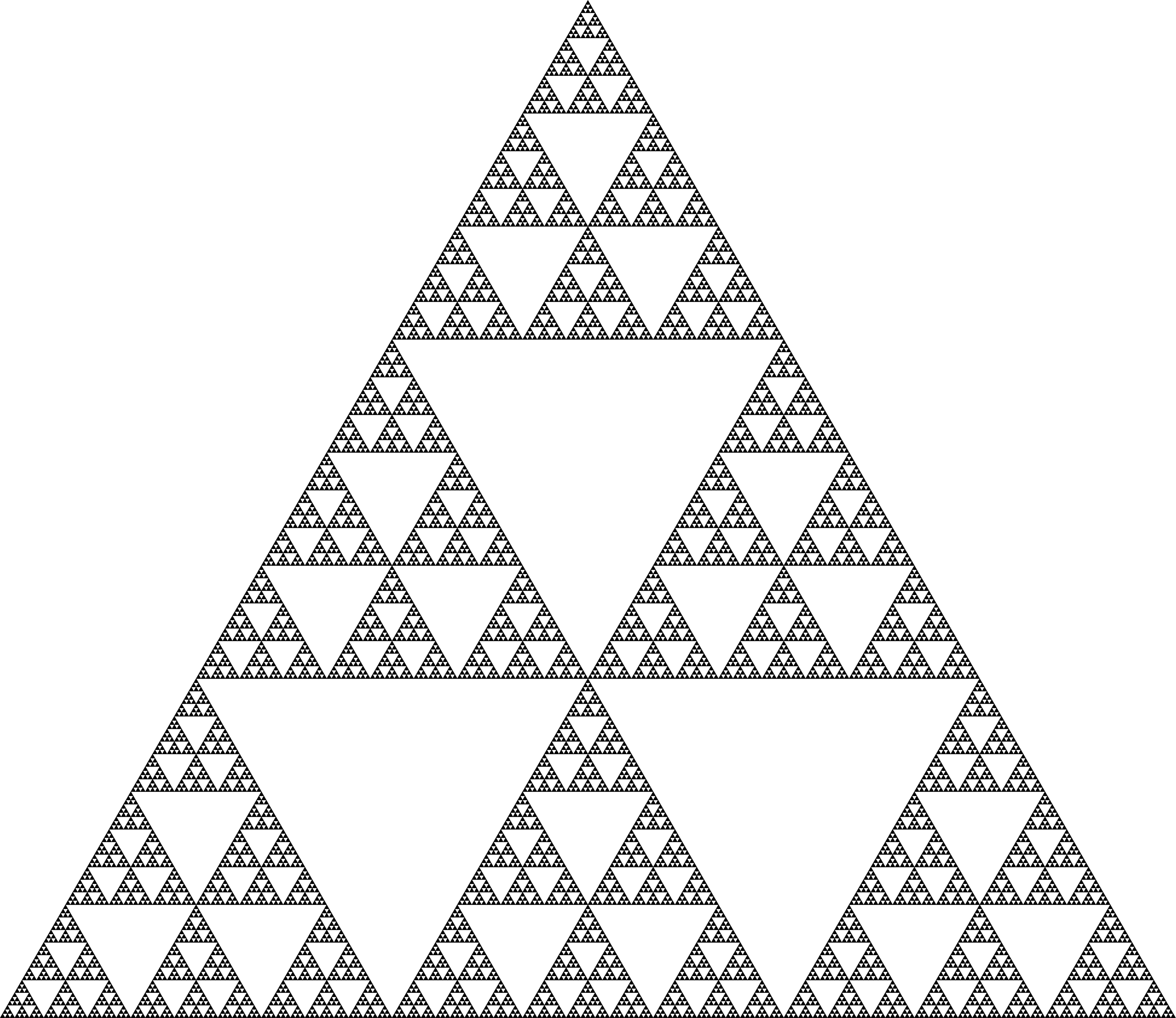}\hspace{5pt}\includegraphics[width=0.3\textwidth]{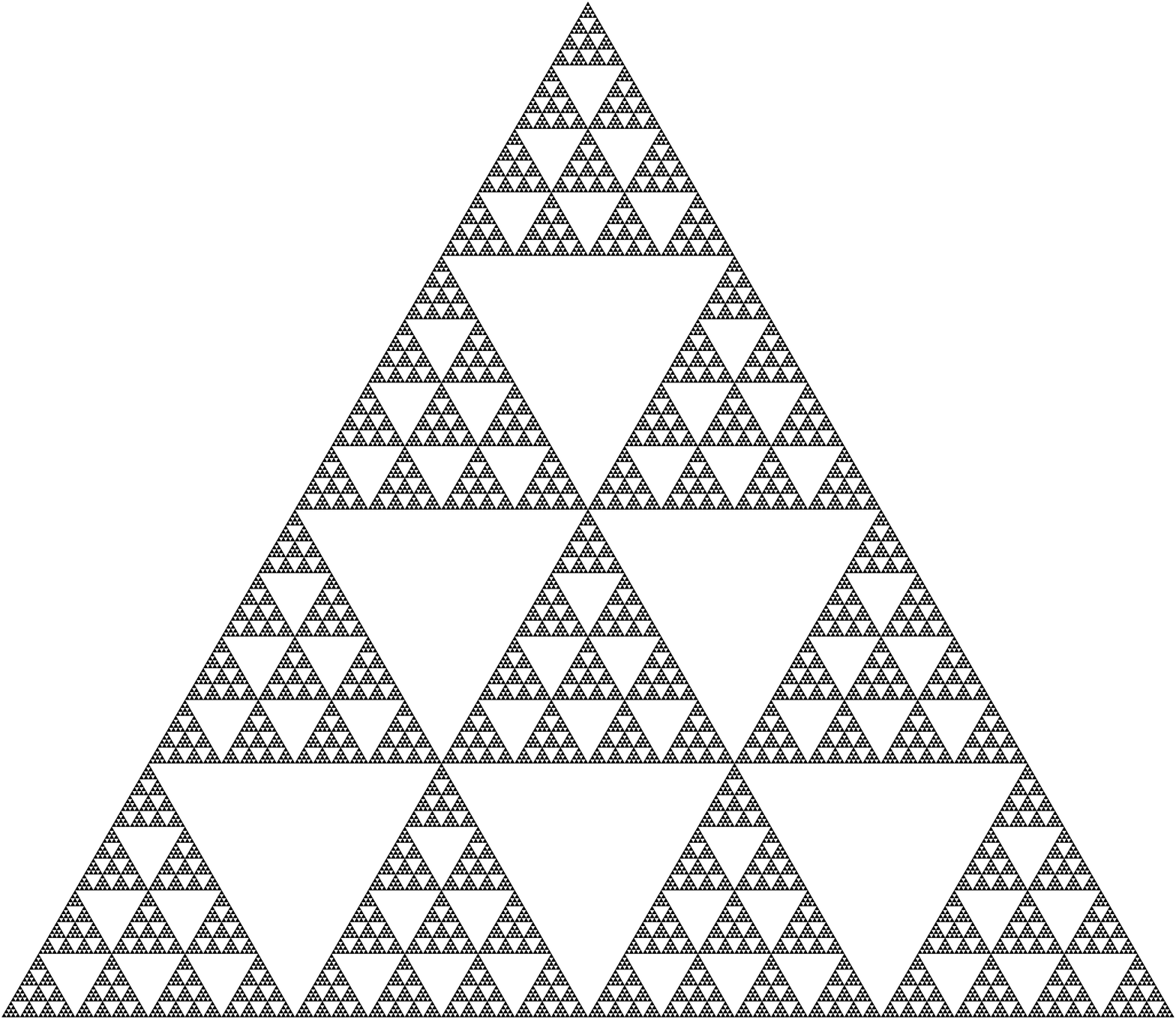}
  \caption{The sets $\mathrm{SG}(2)$, $\mathrm{SG}(3)$ and $\mathrm{SG}(4)$ in two dimensions.}\label{sglfig}
\end{figure}

\begin{theorem}{\rm (\cite[Corollary 2.3]{HK})}\label{fobw}
For each $d\geq 2$, the following hold as $l\to\infty$.
\begin{equation}\label{d_w_l}
d_w(l)=\left\{
           \begin{array}{ll}
             2+\frac{\log\log l}{\log l}+\frac{O(1)}{\log l}, & \hbox{if $d=2$;} \\
              d+\frac{O(1)}{\log l}, & \hbox{if $d\geq 3$.}
           \end{array}
         \right.\end{equation}
\begin{equation}\label{d_s_l}
d_s(l)=\left\{
           \begin{array}{ll}
             2-\frac{\log\log l}{\log l}+\frac{O(1)}{\log l}, & \hbox{if $d=2$;} \\
              2-\frac{O(1)}{\log l}, & \hbox{if $d\geq 3$.}
           \end{array}
         \right.\end{equation}
\end{theorem}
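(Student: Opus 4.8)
The plan is to reduce both displays to the large-$l$ asymptotics of two scaling factors attached to the exactly self-similar structure of $\mathrm{SG}(l)$. Recall that $\mathrm{SG}(l)$ is the union of $N(l)$ isometric copies of itself, each contracted by $l^{-1}$, where $N(l)$ is the number of retained sub-simplices; a short combinatorial count gives $N(l)=\binom{l+d-1}{d}=\Theta(l^d)$, so that $\log N(l)=d\log l+O(1)$. The diffusion $X^l$ is built from the canonical self-similar Dirichlet form on $\mathrm{SG}(l)$, whose renormalization is governed by a single resistance scaling factor $\rho(l)>1$, namely the factor by which the effective resistance between two boundary corners grows under one level of refinement; by the full symmetry of the simplex the equal-conductance harmonic structure is an exact fixed point, so this factor is well defined. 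The commute-time normalization fixes only the overall time constant and does not affect exponents, and the per-level time scaling is $\tau(l)=N(l)\rho(l)$. Standard resistance-form theory on p.c.f.\ self-similar sets then yields the exponent identities
\[
d_w(l)=\frac{\log\tau(l)}{\log l}=\frac{\log (N(l)\rho(l))}{\log l},\qquad d_s(l)=\frac{2\log N(l)}{\log\tau(l)}=\frac{2\log N(l)}{\log (N(l)\rho(l))},
\]
the second following from $d_s=2d_f/d_w$ with Hausdorff dimension $d_f=\log N(l)/\log l$. Thus everything reduces to the asymptotics of $\rho(l)$.

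The heart of the argument, and the step I expect to be the \emph{main obstacle}, is the estimation of $\rho(l)$ as $l\to\infty$. By homogeneity of effective resistance, $\rho(l)$ is comparable, up to multiplicative constants depending only on $d$, to the corner-to-corner effective resistance of the level-one network $G_1(l)$, in which each of the $N(l)$ sub-cells is represented by a fixed, $l$-independent resistor network on its $d+1$ corners; Rayleigh monotonicity shows this comparison is insensitive to the precise internal gadget chosen. For large $l$ this network is a triangular region of the $d$-dimensional simplicial lattice of linear size $l$, with the two marked corners at graph distance $\Theta(l)$, so the problem becomes that of the point-to-point resistance across such a region. Here the classical recurrence/transience dichotomy takes over. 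For $d=2$ the underlying walk is recurrent and the two-point resistance at separation $\Theta(l)$ grows logarithmically (the sharp-angle corners change the constant but not the logarithmic order), so that $\rho(l)\asymp\log l$, i.e.\ $\log\rho(l)=\log\log l+O(1)$. For $d\geq3$ the walk is transient, the lattice Green's function is finite, and the two-point resistance converges to a positive constant as the separation grows, whence $\rho(l)\asymp1$, i.e.\ $\log\rho(l)=O(1)$.

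Turning the heuristic into two-sided bounds is the delicate part: the upper bound on the resistance is obtained by exhibiting explicit unit flows between the corners via Thomson's principle, while the lower bound combines Rayleigh monotonicity with the known lattice resistance asymptotics. One must also account for the $O(1)$ multiplicative discrepancy between the renormalization fixed point $\rho(l)$ and the bare level-one resistance, but this affects $\log\rho(l)$ only additively at order $O(1)$, which is harmless for the stated conclusions.

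It remains to substitute these estimates into the exponent identities. For $d\geq3$ we have $\log (N(l)\rho(l))=d\log l+O(1)$, which gives immediately $d_w(l)=d+O(1)/\log l$ and $d_s(l)=2\log N(l)/(d\log l+O(1))=2-O(1)/\log l$. For $d=2$ we have $\log N(l)=2\log l+O(1)$ and $\log\rho(l)=\log\log l+O(1)$, hence $\log (N(l)\rho(l))=2\log l+\log\log l+O(1)$, so that $d_w(l)=2+\log\log l/\log l+O(1)/\log l$ directly, while the spectral dimension follows from a first-order expansion in $1/\log l$,
\[
d_s(l)=\frac{4\log l+O(1)}{2\log l+\log\log l+O(1)}=2-\frac{\log\log l}{\log l}+\frac{O(1)}{\log l},
\]
which completes the proof.
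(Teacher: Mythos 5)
Your proposal is correct and follows essentially the route of the cited source: this paper does not prove Theorem \ref{fobw} itself but imports it from \cite[Corollary 2.3]{HK}, and your reduction via $d_w(l)=\log(\rho_l N_{d,l})/\log l$ and $d_s(l)=2\log N_{d,l}/\log(\rho_l N_{d,l})$, together with $N_{d,l}=\binom{d+l-1}{d}$ and the resistance asymptotics $\rho_l\asymp\log l$ for $d=2$ and $\rho_l\asymp 1$ for $d\ge 3$, is exactly the decomposition the paper records elsewhere (see \eqref{ndl}, Lemma \ref{hklem} and the identity quoted in the proof of Theorem \ref{t:mr2}). The one step you leave as a sketch --- the two-sided bounds on $\rho_l$ via the recurrence/transience dichotomy for the corner-to-corner resistance of the level-one lattice --- is precisely the content of \cite[Theorem 2.2]{HK}, and your proposed shorting and flow arguments match those the paper itself deploys for the closely related bounds in Lemmas \ref{rhobound} and \ref{diambound}.
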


As this theorem shows, $d_w(l)\rightarrow d$ and $d_s(l)\rightarrow 2$ as $l\rightarrow \infty$, meaning the fractal aspects of the model are retained asymptotically for $d\ge 3$. In our first conclusion, we show that this is not the case for the sample paths of the processes when they are observed on an appropriate time scale. To present this result, we suppose that $X^\Delta=(X^\Delta_t)_{t\geq0}$ is standard Brownian motion on $\Delta$, reflected at the boundary. Unless otherwise stated, we assume that all stochastic processes start from 0. (This is not an essential requirement, but eases the presentation.)

\begin{theorem}\label{t:mr1} There exists a sequence of scaling parameters $(\tau^l)_{l\geq 2}$ taking values in $(0,\infty)$ such that, as $l\rightarrow\infty$,
\[\left(X^l_{\tau^lt}\right)_{t\geq 0}\rightarrow \left(X^\Delta_t\right)_{t\geq 0}\]
in distribution with respect to the local uniform topology on $C([0,\infty),\Delta)$, the space of continuous functions on $\Delta$ (equipped with the Euclidean distance).
\end{theorem}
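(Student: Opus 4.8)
The plan is to establish the convergence of the rescaled processes via a combination of Dirichlet form convergence (Mosco/$\Gamma$-convergence) and tightness. The key conceptual point is that although each $X^l$ is sub-diffusive on its own natural time scale, when we zoom out to the macroscopic scale of $\Delta$ and choose $\tau^l$ appropriately, the fractal structure at fine scales should average out, leaving only the large-scale isotropic diffusive behaviour of $\Delta$. I would organize the argument around the associated Dirichlet forms $(\mathcal{E}^l,\mathcal{F}^l)$ on $L^2(\mathrm{SG}(l),\mu^l)$, where $\mu^l$ is the flat measure, and show that under the time rescaling by $\tau^l$ these converge (in the Mosco sense, after embedding all forms in a common ambient $L^2(\Delta)$ via the natural inclusions $\mathrm{SG}(l)\hookrightarrow\Delta$ and extending measures) to a constant multiple of the classical Dirichlet form $\frac12\int_\Delta|\nabla f|^2\,dx$ associated with reflected Brownian motion on $\Delta$.

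\medskip

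First I would fix the scaling parameter: the time-scale $\tau^l$ must be chosen so that the effective macroscopic diffusivity stays of order one, which amounts to balancing the microscopic time-normalization (commute time $=1$ across $\Delta$) against the resistance/volume scaling exponents of $\mathrm{SG}(l)$. Concretely, $\tau^l$ will be comparable to the inverse of the product of the resistance scaling factor and the measure scaling factor, and the relevant asymptotics can be read off from the exponents controlling $d_w(l)$ in Theorem \ref{fobw}. Second, I would prove the lower bound (liminf) part of Mosco convergence, which typically follows from lower semicontinuity and a straightforward comparison of energies once the embeddings are set up. Third, and most importantly, I would construct recovery sequences for the upper bound (limsup): given a smooth test function $f$ on $\Delta$, I must produce functions $f_l\in\mathcal{F}^l$ whose rescaled energies $\tau^l\mathcal{E}^l(f_l,f_l)$ converge to $\frac{c}{2}\int_\Delta|\nabla f|^2$. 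This is where a homogenization-type argument is needed: one discretizes at an intermediate scale, uses harmonic extensions of $f$ restricted to that scale's vertices, and controls the corrector terms. The Poincaré inequality advertised in the abstract is precisely the tool that lets one bound the discrepancy between the discrete energy and the continuum gradient energy uniformly in $l$, by exploiting the Euclidean-type mixing between the bottlenecks at each scale.

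\medskip

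To upgrade the Dirichlet form convergence to convergence of the processes in $C([0,\infty),\Delta)$ with the local uniform topology, I would combine the resulting convergence of semigroups (which Mosco convergence yields) with finite-dimensional distribution convergence and a tightness estimate. Tightness in the uniform topology requires uniform (in $l$) modulus-of-continuity control, and I expect this to follow from uniform heat kernel or on-diagonal bounds of sub-Gaussian type for $X^l$, rescaled by $\tau^l$; the Poincaré inequality again feeds into establishing such uniform estimates via a Nash-type or parabolic mean-value argument. Since $\Delta$ is compact and the limit process $X^\Delta$ is a Feller process with continuous paths, the identification of the limit is unambiguous once the finite-dimensional distributions are pinned down by the form convergence.

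\medskip

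The hard part will be step three: constructing recovery sequences and proving that the rescaled fractal energies homogenize to the isotropic Euclidean gradient energy with the correct constant, uniformly in $l$. The subtlety is that the number of refinement levels that matter grows with $l$, so one cannot simply fix a finite-scale approximation; instead one must genuinely quantify how the bottleneck structure of $\mathrm{SG}(l)$ dissolves into Euclidean-like transport as $l\to\infty$, and the uniform Poincaré inequality is the essential mechanism making this quantification possible. I would expect the bulk of the technical work, and the most delicate estimates, to reside there.
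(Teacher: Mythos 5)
Your proposal takes a genuinely different route from the paper, and as written it contains gaps that are more than technical. The paper does not use Dirichlet-form (Mosco) convergence at all for Theorem \ref{t:mr1}. Its proof rests on a structural observation that your plan misses entirely: by the symmetries of $\mathrm{SG}(l)$, the trace of $X^l$ on the first-level vertex set $V_1^l$ is \emph{exactly} the simple random walk on a section of the tetrahedral lattice. Consequently no homogenization, corrector analysis or recovery-sequence construction is needed: one proves an invariance principle for that lattice walk (Proposition \ref{p:yscaling}), shows that the successive vertex-hitting times $T^l_n$ concentrate on $n\mathbf{E}(T^l_1)$ (Proposition \ref{tprop}, a martingale/second-moment argument), and concludes that $X^l_{\tau^l t}\approx Y^l_{\lfloor dl^2t\rfloor}\rightarrow X^\Delta_t$ with $\tau^l=dl^2\mathbf{E}(T^l_1)$. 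The only genuinely fractal input is the uniform resistance-diameter bound $\sup_l\sup_{x,y}R^l(x,y)<\infty$ (Lemma \ref{diambound}, resting on $\inf_l\rho_l>1$, Lemma \ref{rhobound}), which controls the variance of $T^l_1$. Note also that the Poincar\'{e} inequality advertised in the abstract is the one of Lemma \ref{thm:lem4.5}, used for the heat-kernel estimates behind the local CLT (Theorem \ref{LCLT}), not for Theorem \ref{t:mr1}; leaning on it for the process-level limit puts the weight in the wrong place.

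Two concrete gaps in your plan. First, Mosco convergence of the rescaled forms yields strong convergence of semigroups and resolvents in an $L^2$ sense, which identifies finite-dimensional distributions only for initial laws that are absolutely continuous with respect to the reference measures; Theorem \ref{t:mr1} concerns processes started from the \emph{fixed point} $0$, and upgrading $L^2$-semigroup convergence to pointwise convergence of $P^l_{\tau^l t}f(0)$ requires uniform equicontinuity of the heat kernels --- precisely the content of the paper's Section 4, which is considerably harder than the proof of Theorem \ref{t:mr1} itself. Your route would therefore force you to prove (a version of) Theorem \ref{PT6} before even establishing finite-dimensional convergence from $0$. Second, the liminf half of Mosco convergence over the varying spaces $L^2(\mathrm{SG}(l),\mu^l)$ is not ``a straightforward comparison of energies'': given $f_l\in\mathcal{F}^l$ converging weakly to $f$, one must extract enough compactness to bound $\int_\Delta|\nabla f|^2$ by $\liminf_l\tau^l\mathcal{E}^l(f_l,f_l)$, and the degenerating bottleneck structure at every scale $l^{-m}$ makes this delicate. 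The limsup/recovery direction you flag as the hard part is in fact the easy one here: taking $f_l$ to be the harmonic extension of $f|_{V_1^l}$, the consistency \eqref{consist} reduces $\tau^l\mathcal{E}^l(f_l,f_l)$ to a rescaled lattice energy $\asymp l^{2-d}\sum_{|x-y|=l^{-1}}(f(x)-f(y))^2$, which converges to $c\int_\Delta|\nabla f|^2$ with no correctors, exactly the computation appearing at the end of the proof of Proposition \ref{p:yscaling}. None of this means the Dirichlet-form route is impossible, but as proposed it is incomplete and substantially more work than the hitting-time/time-change argument the paper uses.
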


To place this result into context, we note there have been substantial developments in the literature for understanding the scaling limits of processes on fractal-like spaces as described by so-called `resistance forms'. See \cite{Croy, CHK, Kig}, for example. However, we can not immediately apply the results in those works to deduce Theorem \ref{t:mr1} since, whilst the processes $X^l$, $l\geq 2$, are associated with resistance forms, the limiting process $X^\Delta$ is not. Indeed, the resistance metric (see Subsection \ref{ressec} for a definition) degenerates in the limit, and so fails to capture the behaviour of the process $X^\Delta$.

The scaling factors $(\tau^l)_{l\geq 2}$ of Theorem \ref{t:mr1} will initially be defined in terms of a certain hitting time, see \eqref{tauldef} below. Combining our definition of $\tau^l$ with \eqref{d_w_l}, we have the following result. (In the following, we write $f(l)\asymp g(l)$ if there exist $c_1,c_2>0$ such that $c_1g(l)\le f(l)\le c_2g(l)$ for all $l$, and $f(l)\sim g(l)$ if $\lim_{l\to \infty}f(l)/g(l)$ exists.)

\begin{theorem}\label{t:mr2} It holds that
\[\tau^l= \frac{1}{2}l^{2-d_w(l)}\asymp\left\{
                \begin{array}{ll}
                  \frac{1}{\log l}, & \hbox{if $d=2$;} \\
                  l^{2-d}, & \hbox{if $d\geq 3$.}
                \end{array}
              \right.\]
\end{theorem}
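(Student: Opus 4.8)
The plan is to prove the two assertions separately: first the exact identity $\tau^l = \frac{1}{2}l^{2-d_w(l)}$, and then the two-sided asymptotic bounds, the latter being a routine consequence of the former together with \eqref{d_w_l}.

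For the exact identity I would start from the definition \eqref{tauldef} of $\tau^l$ in terms of a hitting time and unpack it using the exact self-similarity of Brownian motion on $\mathrm{SG}(l)$. The normalisation fixes the expected commute time between $0$ and any other vertex of $\Delta$ to be $1$; since $\mathrm{SG}(l)$ is symmetric under the reflection exchanging these two vertices, the corresponding one-sided expected hitting time equals $\frac{1}{2}$. The key structural input is that $\mathrm{SG}(l)$ decomposes into $N$ contractive copies of itself at linear scale $1/l$, and that the per-level time-scaling factor equals $l^{d_w(l)}$; equivalently, the walk dimension computed via \eqref{walkdim} coincides with the exact self-similar exponent $\log(N\rho(l))/\log l$, where $\rho(l)$ is the resistance-scaling factor. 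Combining these, the expected hitting time across a single first-level cell (of Euclidean diameter $1/l$) is exactly $\frac{1}{2}l^{-d_w(l)}$. The Euclidean factor $l^{2}$ present in \eqref{tauldef}, which records that a displacement of order one is accumulated diffusively over about $l^{2}$ cell-crossings of size $1/l$, then yields $\tau^l = l^{2}\cdot \tfrac{1}{2}l^{-d_w(l)} = \frac{1}{2}l^{2-d_w(l)}$.

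Given the identity, the asymptotics follow by writing $l^{2-d_w(l)} = \exp\bigl((2-d_w(l))\log l\bigr)$ and substituting \eqref{d_w_l}. For $d=2$ one has $(2-d_w(l))\log l = -\log\log l + O(1)$, so that $l^{2-d_w(l)} = e^{O(1)}/\log l \asymp 1/\log l$; for $d\geq 3$ one has $(2-d_w(l))\log l = (2-d)\log l + O(1)$, so that $l^{2-d_w(l)} = e^{O(1)}\,l^{2-d}\asymp l^{2-d}$. Multiplying by the constant $\frac{1}{2}$ does not affect the $\asymp$ relations, which completes the argument.

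The routine part is the final exponential manipulation; the only genuinely delicate point is pinning down the exact constant $\frac{1}{2}$ and the exact power $l^{2-d_w(l)}$ in the first identity. This requires care to ensure that the limiting exponent appearing in \eqref{walkdim} agrees with the exact per-level scaling factor $N\rho(l)$, which holds precisely because these gaskets are exactly (rather than merely asymptotically) self-similar, and that the hitting-time normalisation is transported correctly through the self-similar decomposition. Once this bookkeeping is in place, no further estimates are needed, since the hard analytic content, namely the expansion \eqref{d_w_l}, may be quoted directly from \cite{HK}.
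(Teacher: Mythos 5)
Your overall route is the same as the paper's: use the exact self-similarity to write $\mathbf{E}(T_1^l)$ as $\rho_l^{-1}N_{d,l}^{-1}=l^{-d_w(l)}$ times the expectation of a hitting time on the whole gasket, evaluate that expectation via the commute-time normalisation and symmetry, and then read off the $\asymp$ from \eqref{d_w_l} (this last step you carry out exactly as the paper does, and it is fine). The gap is in the constant. The definition \eqref{tauldef} is $\tau^l = d\,l^2\,\mathbf{E}(T_1^l)$, and by self-similarity $\mathbf{E}(T_1^l) = l^{-d_w(l)}\,\mathbf{E}(\sigma^l(V_0\setminus\{0\}))$, where $\sigma^l(V_0\setminus\{0\})$ is the time for $X^l$ to hit the \emph{set} of all $d$ vertices of $V_0$ other than $0$. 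Your symmetry argument only gives $\mathbf{E}(\sigma^l_x)=\tfrac12 R^l(0,x)=\tfrac12$ for a \emph{single} vertex $x$; it says nothing about $\mathbf{E}(\sigma^l(V_0\setminus\{0\}))$, which is the quantity that actually enters $\tau^l$ and which equals $\tfrac{1}{2d}$, not $\tfrac12$. Passing from the point hitting time to the set hitting time is the substantive step of the paper's proof: applying the strong Markov property at time $\sigma^l(V_0\setminus\{0\})$ together with the symmetry of the gasket yields $\mathbf{E}(\sigma^l_x)=\mathbf{E}(\sigma^l(V_0\setminus\{0\}))+\mathbf{E}(\sigma^l_x)\,\mathbf{P}(\sigma^l_x>\sigma^l(V_0\setminus\{0\}))$ with $\mathbf{P}(\sigma^l_x>\sigma^l(V_0\setminus\{0\}))=(d-1)/d$, whence $\mathbf{E}(\sigma^l(V_0\setminus\{0\}))=\tfrac{1}{2d}$ and $\tau^l=d\,l^{2-d_w(l)}\cdot\tfrac{1}{2d}=\tfrac12 l^{2-d_w(l)}$.

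As written, your computation asserts that the expected hitting time associated with a single first-level cell is exactly $\tfrac12 l^{-d_w(l)}$ and then multiplies by $l^2$ rather than by the $d\,l^2$ appearing in \eqref{tauldef}. These two slips (a factor $d$ too large in the cell-crossing time, and a factor $d$ dropped from the definition of $\tau^l$) cancel, so you land on the correct identity, but the argument does not establish it. What is needed is precisely the identity $\mathbf{E}(\sigma^l(V_0\setminus\{0\}))=\tfrac{1}{2d}$ (equation \eqref{req} of the paper), and that requires the strong-Markov/symmetry decomposition above, not merely the commute-time identity for a single pair of vertices.
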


In particular, Theorem \ref{t:mr2} shows that $\tau^l\rightarrow0$ as $l\rightarrow\infty$, and so we are slowing down the diffusions $X^l$ by an increasing amount as $l\rightarrow\infty$. Intuitively this makes sense as the diffusions $X^l$ hit the vertices of $\Delta$ in finite time, whereas the process $X^\Delta$ does not.

\begin{remark}\label{HKcomp}
\begin{enumerate}
\item[(a)] The $\asymp$ in the statement of Theorem \ref{t:mr2} is a direct consequence of
\eqref{d_w_l}. It would be interesting to determine whether $\asymp$ could be replaced by $\sim$; this would require showing that the $O(1)$ terms in \eqref{d_w_l} are of the form $C+o(1)$ for some constant $C\in\mathbb{R}$. (See Remark \ref{resrem} below for further discussion of this point.)
\item[(b)] We highlight that the walk dimension of the limiting process $X^\Delta$ is 2, independent of the dimension. Hence, apart from in dimension two, the walk dimensions $d_w(l)$ do not converge to the walk dimension of $X^\Delta$ as $l\rightarrow\infty$. In particular, the fractality of the spaces persists in the walk dimension, but not at the level of processes on the time scale of Theorem \ref{t:mr1}. This difference can be explained by noting that the walk dimension is given by the arbitrarily short-time asymptotics of the diffusions $X^l$, whereas the scaling limit of Theorem \ref{t:mr1} depends of the behaviour of $X^l$ at a specified (albeit still short) time scale, as is required to enable the process to suitably explore the space.
\item[(c)] One might also consider the behaviour of $X^l$ on the infinite versions of the fractals, as given by $\mathrm{SG}_\infty(l):=\cup_{n=1}^\infty l^n\mathrm{SG}(l)$. In this case, the limiting space is naturally given by an infinite collection of tetrahedra only connected at vertices, see Figure \ref{sglinffig} for the two-dimensional case. On the time-scales considered in Theorem \ref{t:mr1}, the natural diffusions on $\mathrm{SG}_\infty(l)$, $X^{l,\infty}$ say, will take an increasingly long time to leave the initial tetrahedron. Indeed, the expected time it takes $X^{l,\infty}_{\tau^l\cdot}$ to hit a vertex in $V_0$ other than its starting location is $1/2\tau^l$. Since $1/\tau^l$ diverges, it is easy to see that the limit process must be trapped in the initial tetrahedron forever and, in fact, one readily sees that $X^{l,\infty}_{\tau^l\cdot}\rightarrow X^\Delta$ by the same argument as Theorem \ref{t:mr1}. This situation is to be expected given that, as already noted, the limiting Brownian motion $X^\Delta$ never hits points. On the other hand, by considering the process $X^{l,\infty}$ at hitting times of tetrahedral vertices in the limiting space, it is an elementary exercise to check that, for any sequence $\varepsilon_l\rightarrow0$,
    \[\left(\varepsilon_l X^{l,\infty}_{\varepsilon_l^{-2}t}\right)_{t\geq 0}\rightarrow \left(X^\angle_t\right)_{t\geq 0}\]
    in distribution, where $X^\angle$ is the Brownian motion on the cone given by the convex hull of $\mathrm{SG}_\infty(l)$ (for any $l$), reflected at the boundary. Finally, if one considers the processes $X^{l,\infty}$ without time-rescaling, we expect that one will see the processes mix within each tetrahedron in asymptotically zero time, and then, after an asymptotically exponential amount of time, jump to an adjacent tetrahedron, uniformly amongst the possible neighbours. (One could regard the process as a continuous-time Markov chain on the graph representing the part of the tetrahedral lattice obtained when one considers each tetrahedron as a point.) In particular, we conjecture that a functional scaling limit with respect to the uniform topology will not hold. However, it should be possible to describe the finite-dimensional distributions of the limiting process in terms of a continuous time random walk on tetrahedra in such a way that, conditional on where this walk is, the limiting process is uniformly distributed on the relevant tetrahedron. We leave checking the details of such a result as an open problem.
\end{enumerate}
\end{remark}

\begin{figure}[t]
  \centering
  \includegraphics[width=0.9\textwidth]{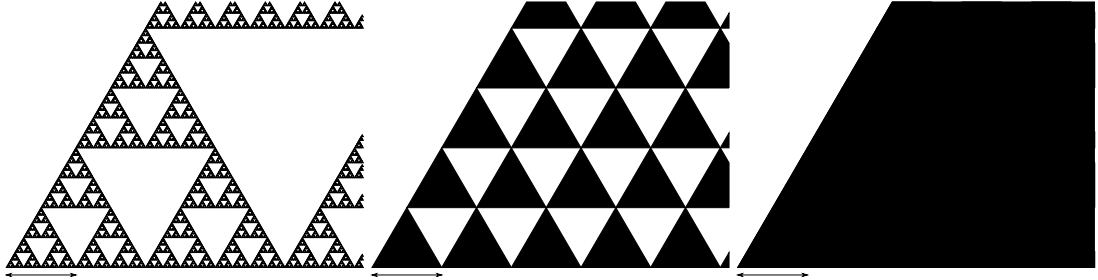}
  \caption{The set $\mathrm{SG}_\infty(2)$, the (Hausdorff) limit of the sets $\mathrm{SG}_\infty(l)$ (as $l\rightarrow\infty$) and the convex hull of $\mathrm{SG}_\infty(l)$ (for any $l$). The marked line segment shows the distance between vertices in $V_0$.}\label{sglinffig}
\end{figure}

As already advertised, in addition to a functional scaling limit, we can also give a scaling limit for the transition densities of $X^l$, $l\geq 2$. Recall that $(p^l_t(x,y))_{x,y\in \mathrm{SG}(l),\:t>0}$ is the transition density of Brownian motion on $\mathrm{SG}(l)$. Similarly, we write $(p^\Delta_t(x,y))_{x,y\in \Delta,\:t>0}$ for the transition density of $X^\Delta$ with respect the $d$-dimensional Hausdorff measure on $\Delta$ normalised to be a probability measure. We are then able to show the following uniform convergence.

\begin{theorem}\label{LCLT}
For every compact interval $I\subseteq (0,\infty)$, it holds that
\[\lim_{l\rightarrow\infty}\sup_{x,y\in \mathrm{SG}(l)}\sup_{t\in I}\left|p^l_{\tau^l t}(x,y)-p^\Delta_{t}(x,y)\right|=0.\]
\end{theorem}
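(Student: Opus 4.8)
The plan is to upgrade the process-level convergence of Theorem \ref{t:mr1} to a uniform convergence of transition densities (a local central limit theorem) by combining weak convergence with suitable equicontinuity and heat-kernel estimates. The overall strategy has three ingredients: (i) uniform-in-$l$ upper and lower heat-kernel bounds for $p^l_{\tau^l t}$, (ii) equicontinuity of the family $\{(x,y,t)\mapsto p^l_{\tau^l t}(x,y)\}$ on compact subsets of $\mathrm{SG}(l)\times\mathrm{SG}(l)\times I$, and (iii) identification of any subsequential limit as $p^\Delta_t$, using the functional convergence already established. The upgrade from weak to uniform convergence is then a standard Arzelà–Ascoli argument, once the equicontinuity is in place.

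Let me sketch these in order. **First I would** establish the necessary heat-kernel estimates. On each fixed $\mathrm{SG}(l)$ one has sub-Gaussian bounds governed by the walk dimension $d_w(l)$, but these degenerate as $l\to\infty$; the point is that at the rescaled time $\tau^l t = \tfrac12 l^{2-d_w(l)}t$, the relevant length scale becomes order one, so after rescaling one should recover genuinely Gaussian-type bounds that are uniform in $l$. Concretely, I expect to prove that there exist constants $c_1,c_2,C_1,C_2>0$, independent of $l$, such that
\begin{equation}\label{eq:hk-unif}
C_1 t^{-d/2}\exp\!\left(-c_1\frac{|x-y|^2}{t}\right)\le p^l_{\tau^l t}(x,y)\le C_2 t^{-d/2}\exp\!\left(-c_2\frac{|x-y|^2}{t}\right)
\end{equation}
for all $x,y\in\mathrm{SG}(l)$ and all $t$ in a neighbourhood of $I$, at least for $l$ large. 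The key analytic input here is the Poincaré inequality highlighted in the abstract, which (together with volume regularity and the Euclidean-type mixing across the bottlenecks) should yield a parabolic Harnack inequality with constants uniform in $l$; this in turn delivers both \eqref{eq:hk-unif} and the needed time- and space-regularity of $p^l_{\tau^l t}$. This is where the bulk of the work lies, and the Poincaré inequality is the crucial enabling estimate.

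**Next I would** derive equicontinuity. The uniform parabolic Harnack inequality gives a uniform Hölder modulus of continuity for $(x,t)\mapsto p^l_{\tau^l t}(x,y)$ (and by symmetry in $y$), so the family is uniformly bounded and uniformly equicontinuous on $\{|x-y|\ge\delta\}\cap I$ for each $\delta>0$; near the diagonal the on-diagonal upper bound in \eqref{eq:hk-unif} controls the oscillation since both $p^l_{\tau^l t}$ and $p^\Delta_t$ are bounded by $C t^{-d/2}$ there. **Finally,** by Arzelà–Ascoli every subsequence has a locally uniformly convergent further subsequence, and I would identify the limit: integrating $p^l_{\tau^l t}(x,\cdot)$ against a test function and appealing to the weak convergence $X^l_{\tau^l\cdot}\to X^\Delta$ from Theorem \ref{t:mr1} forces any subsequential limit to equal $p^\Delta_t$, whence the full family converges. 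A technical nuisance is that $p^l$ and $p^\Delta$ are defined with respect to different reference measures on different spaces, so I would interpret the supremum over $x,y\in\mathrm{SG}(l)$ via the embedding $\mathrm{SG}(l)\hookrightarrow\Delta$ and verify that the flat measure on $\mathrm{SG}(l)$ converges weakly to the normalised Hausdorff measure on $\Delta$, so that the two densities are measured on a common footing. I expect the uniform Poincaré inequality, and the resulting $l$-independent Harnack inequality, to be the main obstacle; everything downstream is a fairly routine compactness-and-identification argument.
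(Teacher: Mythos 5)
Your overall architecture---equicontinuity of the rescaled heat kernels combined with the functional convergence of Theorem \ref{t:mr1}, followed by a compactness-and-identification argument---is exactly the route the paper takes (its final step is an adaptation of a standard local limit theorem argument from weak convergence plus equicontinuity). The genuine gap is in your central analytic claim: that the Poincar\'{e} inequality together with volume regularity yields a parabolic Harnack inequality, and hence uniform two-sided Gaussian bounds, with constants independent of $l$. The Poincar\'{e} inequality that actually holds here is the cell-restricted one of Lemma \ref{thm:lem4.5}: the $L^2$-oscillation of $f$ is controlled only over $B(x,r)\cap\psi_i(\mathrm{SG}(l))$, with the energy on the right-hand side summed over $(m+1)$-cells \emph{within the same $m$-cell}. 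A Poincar\'{e} inequality on genuine balls fails, because a ball straddling two $m$-cells sees them joined only through the bottleneck vertices---a situation analogous to two copies of $\mathbb{Z}^d$ glued at a single point, which is a standard counterexample to the Poincar\'{e} inequality. Consequently the usual chain (volume doubling $+$ Poincar\'{e} inequality $\Rightarrow$ parabolic Harnack inequality $\Leftrightarrow$ two-sided Gaussian bounds) is not available, and your proposed two-sided estimate is not established. Indeed the paper explicitly refrains from claiming any heat kernel lower bound (even the exit-time lower bound acquires an extra $l$-dependent factor; see the remark following Lemma \ref{L.R2}), and only the on-diagonal \emph{upper} bound of Lemma \ref{hkbound} is proved and used.

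What replaces the parabolic Harnack inequality in the paper is a two-track argument. The cell-wise Poincar\'{e} inequality feeds into a Nash-type iteration---which only requires bounding $\frac{d}{dt}\|u_t\|_2^2$ via a cover by sets of the form $B(x,r)\cap\psi_i(\mathrm{SG}(l))$---yielding the on-diagonal upper bound of Lemma \ref{hkbound}. Separately, an \emph{elliptic} Harnack inequality for balls of radius $r>8l^{-1}$, obtained by comparison with the tetrahedral lattice walk rather than from the Poincar\'{e} inequality, gives uniform H\"{o}lder continuity of harmonic functions (Theorem \ref{T.R1}); this is transferred to the resolvents $U^l_\lambda$ (Proposition \ref{PR3}) and then to the semigroup via the spectral theorem (Proposition \ref{PR4}), with the on-diagonal bound supplying the $t$-dependent constant. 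To rescue your plan you would need either to prove a ball Poincar\'{e} inequality uniformly in $l$ (which the authors argue is impossible) or to follow some version of this elliptic-Harnack-plus-resolvent route; the downstream Arzel\`{a}--Ascoli and identification steps you describe, including the point about comparing reference measures, are then fine.
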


\begin{remark}\label{rem2}
\begin{enumerate}
\item[(a)] The proof of Theorem \ref{LCLT} highlights the point at which the transition density evolves from the fractal scaling to the Euclidean. Indeed, in Lemma \ref{hkbound} below, we show that, for $t\leq l^{-d_w(l)}$, the on-diagonal supremum $\sup_{x\in\mathrm{SG}(l)}p^l_t(x,x)$ is bounded above by an expression of the form $t^{-d_f(l)/d_w(l)}$ (multiplied by a suitable $l$-dependent constant), whereas for $t\geq l^{-d_w(l)}$, the bound is of the form $t^{-d/2}$. Whilst we only need the upper bound, we expect that the nature of the $t$ dependence is precise (though possibly there will be a different $l$-dependent constant needed in a corresponding lower bound). This is natural to expect, since $l^{-d_w(l)}$ is precisely the time-scale on which the diffusion $X^l$ leaves a level 1 cell, and thus moves beyond the fractal scale to the more Euclidean structure in which these cells are arranged.
\item[(b)] A similar remark to Remark \ref{HKcomp}(b) could be made concerning the spectral dimensions $d_s(l)$ of the diffusions $X^l$. As noted in \eqref{d_s_l}, $d_s(l)\rightarrow 2$ as $l\rightarrow \infty$. Since the limiting process $X^\Delta$ has spectral dimension $d$, there is only continuity in these exponents in dimension 2. Again, comparing the results of \cite{HK} and this paper, we see that the fractality of the spaces is preserved on arbitrarily short time scales, but not on the time scales given by $\tau^l$ (which is much longer than the time-scale $l^{-d_w(l)}$ identified in the previous comment). In fact, in the arguments used to prove Theorem \ref{LCLT}, we are required to consider the interplay between the fractal and Euclidean nature of the spaces not just at the first level, but between each scale level along the sequence $l^{-m}$, $m\geq 0$.
  \item[(c)] Whilst we do not pursue it here, we believe that by appealing to the general framework of \cite{KS}, it should also be possible to show convergence of the rescaled eigenvalues and eigenfunctions of the generators of the processes $X^l$, $l\geq 2$, to those associated with the limiting process $X^\Delta$.
\end{enumerate}
\end{remark}

The remainder of the article is organised as follows. In Section \ref{prelimsec}, we present some basic background regarding the construction of the sets $\mathrm{SG}(l)$ and the associated processes $X^l$, as well as the limiting process $X^\Delta$. Section \ref{sec3} contains the proof of Theorem \ref{t:mr1}, a key input to which is an estimate on the diameter of $\mathrm{SG}(l)$ with respect to an effective resistance metric. Using some symmetries of the underlying objects, the proof of Theorem \ref{t:mr2} is also set out in this section. Next, in Section \ref{sec4}, we show the equicontinuity of the rescaled transition densities of the processes $X^l$, and as a consequence obtain Theorem \ref{LCLT}. Finally, in Section \ref{sec5}, we discuss some further examples to which the approach of this article should be applicable, and some for which additional work seems to be required. Unless specifically noted, constants such as $c, C,\dots$ may change from line to line.

\section{Preliminaries}\label{prelimsec}

In this section, we introduce some of the key notation that will be used throughout the rest of the article. As already noted, we will typically suppress the dimensional dependence from the notation.

\subsection{The $l$-level Sierpinski gaskets}

To construct an $l$-level Sierpinski gasket in $d$ dimensions, we start by considering a $d$-dimensional regular tetrahedron $\Delta$ of side length 1, with one vertex equal to 0. (For each dimension $d$, the set $\Delta$ will be fixed for all $l$.) For each $l\in\mathbb{N}$, let $N_{d,l}$ be the maximal number of translations of $l^{-1}\Delta$ that can be placed inside $\Delta$ and only intersect at vertices. There is only one arrangement that achieves this; we denote the corresponding translations of $l^{-1}\Delta$ by $(\Delta_j)_{j=1}^{N_{d,l}}$. For example, when $d=2$, these are the collection of upward pointing triangles that appear when $\Delta$ is tiled by triangles a factor $l$ smaller. (A similar description can also be given in higher dimensions.) As noted in \cite{HK}, it holds that $N_{2,l}=l(l+1)/2$, and $N_{d,l}$ can be computed for $d\geq 3$ via the formula
\[N_{d,l}=\sum_{k=1}^lN_{d-1,k}.\]
It is a simple exercise in induction (on $d$) to check that this implies
\begin{equation}\label{ndl}
N_{d,l}=\binom{d+l-1}{d}.
\end{equation}
Now, for each $j=1,\dots,N_{d,l}$, let $\psi_j$ be the orientation preserving similitude from $\Delta$ to $\Delta_j$. For $l\geq 2$, the $l$-level Sierpinski gasket that we will consider, $\mathrm{SG}(l)$, is then the unique compact set $K$ satisfying
\[K=\bigcup_{j=1}^{N_{d,l}}\psi_j(K).\]
See Figure \ref{sglfig} for illustrations of $\mathrm{SG}(2)$, $\mathrm{SG}(3)$ and $\mathrm{SG}(4)$ in two dimensions.

The set $\mathrm{SG}(l)$ is readily computed to have Hausdorff dimension given by
\[d_H\left(\mathrm{SG}(l)\right)=\frac{\log N_{d,l}}{\log l}.\]
Moreover, the corresponding $d_H(\mathrm{SG}(l))$-dimensional Hausdorff measure on $\mathrm{SG}(l)$ is a non-trivial, finite measure of full support. We will denote by $\mu^l$ the version of this measure that has been normalized to have total mass equal to one. In particular, it holds that $\mu^l(\mathrm{SG}(l)\cap \Delta_j)=1/N_{d,l}$ for each $j=1,\dots,N_{d,l}$.

In our subsequent arguments, it will be convenient to label the `vertices' of each $\mathrm{SG}(l)$. As noted in the introduction, we will denote by $V_0$ the vertices of the original regular tetrahedron $\Delta$. We will further define the $i$-th level vertices inductively by setting
\[V_{i+1}^l:=\bigcup_{j=1}^{N_{d,l}}\psi_j\left(V_i^l\right).\]
Note in particular that if we consider $V_1^l$ to be the vertices of a graph, with edges between nearest-neighbour vertices, then $V_1^l$ is simply the restriction of a tetrahedral lattice with edge length equal to $l^{-1}$ to those vertices within $\Delta$.

\subsection{Resistance and Brownian motion}\label{ressec}

To construct our Brownian motion $X^l=(X^l_{t})_{t\geq 0}$ on $\mathrm{SG}(l)$, we will apply the general machinery of \cite{Kig} (see also \cite{Kigq}), which covers a much wider collection of self-similar fractals. (We note that Brownian motion was constructed and detailed heat kernel
estimates were obtained earlier on nested fractals, which is a class of fractals including $\mathrm{SG}(l)$. See \cite{DobKus,FHK,Fuk92,HKpcf,Kum93,Lin}, for instance.) We start by considering the bilinear form on $V_0$ given by setting:
\[\mathcal{E}_0(f,f):=\frac{1}{d+1}\sum_{x,y\in V_0}\left(f(x)-f(y)\right)^2.\]
In particular, the above expression represents the energy dissipated in the network created when wires of resistance $(d+1)/2$ are placed between each pair of vertices in $V_0$ and vertices are held at voltages according to $f$. The choice of normalisation is made so that the corresponding effective resistance, as defined by setting $R_0(x,x):=0$ and, for $x\neq y$,
\begin{equation}\label{rdef}
R_0(x,y):=\left(\inf\left\{\mathcal{E}_0(f,f):\:f(x)=0,\:f(y)=1\right\}\right)^{-1},
\end{equation}
is simply the discrete metric on $V_0$, i.e.\ $R_0(x,y)=\mathbf{1}_{\{x\neq y\}}$. For each $d,l\geq 2$, it is then possible to deduce the existence of a constant $\rho_l=\rho_{d,l}>1$ such that if a sequence of bilinear forms $(\mathcal{E}_i^l)_{i\geq 1}$ is defined on the sets $(V_i^l)_{i\geq 1}$, respectively, by setting
\begin{equation}\label{edef}
\mathcal{E}_{i+1}^l(f,f):=\rho_{l}\sum_{j=1}^{N_{d,l}}\mathcal{E}^l_i\left(f\circ \psi_j,f\circ \psi_j\right),
\end{equation}
where $\mathcal{E}^l_0:=\mathcal{E}_0$, then the corresponding effective resistances $R_i^l$, defined analogously to \eqref{rdef}, satisfy
\begin{equation}\label{consist}
R_{i}^l(x,y)=R_{i'}^l(x,y)
\end{equation}
for every $i>i'$ and $x,y\in V^l_{i'}$. This compatibility means it is possible to introduce a metric $R^l$ on $\cup_{i=1}^\infty V_i^l$ such that $R^l=R^l_i$ on each $V^l_i$ (and $R^l=R_0$ on $V_0$). By \cite[Theorem 3.3.4]{Kig}, one can use continuity to extend $R^l$ to a metric on the whole $\mathrm{SG}(l)$ that is equivalent to the Euclidean metric (see \cite[Remark 3.7]{FHK}). Moreover, as per \cite[Definition 2.3.2]{Kig}, $R^l$ is a resistance metric on $\mathrm{SG}(l)$ and, by \cite[Theorem 2.3.6]{Kig}, there is an associated resistance form $(\mathcal{E}^l,\mathcal{F}^l)$, which is a bilinear form on $\mathrm{SG}(l)$ satisfying
\begin{equation}\label{rldef}
R^l(x,y):=\left(\inf\left\{\mathcal{E}^l(f,f):\:f\in\mathcal{F}^l,\:f(x)=0,\:f(y)=1\right\}\right)^{-1},
\end{equation}
for each $x,y\in \mathrm{SG}(l)$ with $x\neq y$. Finally, we note that the results of \cite[Section 2.4]{Kig} show that $(\mathcal{E}^l,\mathcal{F}^l)$ can also be considered as a regular Dirichlet form on $L^2(\mathrm{SG}(l),\mu^l)$. Thus, by the classical correspondence (e.g.\ \cite[Theorem 7.2.1]{FOT}), there exists a related Hunt process $X^l$, which can be checked to be a diffusion process; this is our Brownian motion. Since points have positive capacity in our setting (by \cite[Theorem 9.9]{Kigq}), the law of this process is uniquely defined from any starting point. We recall from the introduction that, for simplicity of presentation, we will henceforth suppose that $X^l$ is started from 0 unless otherwise stated.

\subsection{Vertex hitting times and an associated random walk}\label{23sec}

At the heart of our arguments will be the discrete-time random walk obtained by observing $X^l$ at the hitting times of vertices in $V_1^l$. To introduce this, we first recall that processes associated with resistance forms (on compact spaces) satisfy the commute time identity (see \cite[equation (2.17)]{CHK}, for example). In particular, if $\sigma^l_x:=\inf\{t\geq 0:\:X_t^l=x\}$, then
\begin{equation}\label{commute}
\mathbf{E}\left(\sigma^l_y\:\vline\:X^l_0=x\right)+\mathbf{E}\left(\sigma^l_x\:\vline\:X^l_0=y\right)=R^l(x,y),\qquad \forall x,y\in \mathrm{SG}(l).
\end{equation}
Applying this in conjunction with the strong Markov property for $X^l$, we obtain that the stopping times $(T_n^l)_{n\geq 0}$ are all well-defined and finite random variables, where $T_0^l=0$ and
\[T_n^l:=\inf\left\{t>T_{n-1}^l:\:X^l_t\in V_1^l\backslash\left\{X^l_{T_{n-1}^l}\right\}\right\}.\]
Moreover, from the obvious symmetries of the fractal $\mathrm{SG}(l)$, we have that the discrete-time process $(Y^l_n)_{n\geq 0}$ given by
\[Y^l_n:=X^l_{T_{n}^l},\qquad \forall n\geq 0,\]
is the simple random walk on $V_1^l$ (considered as a graph with nearest-neighbour edges) started from 0.

As a final piece of notation for this preliminary section, we define
\begin{equation}\label{tauldef}
\tau^l:=dl^2\mathbf{E}(T_1^l).
\end{equation}
In Section \ref{sec4}, we will later check that this is equivalent to the description of $\tau^l$ given in Theorem \ref{t:mr2}. As for Theorem \ref{t:mr1}, we will use a law of large numbers for the sequence $(T_n^l)_{n\geq 0}$ to deduce that
\[T_n^l\approx n\mathbf{E}(T_1^l) =n\tau^ll^{-2}d^{-1}\]
(see Proposition \ref{tprop}), which then allows us to show
\begin{equation}\label{sketch}
X^l_{\tau^lt}\approx X^l_{T^l_{\lfloor dl^2t\rfloor }}=Y^l_{\lfloor dl^2t\rfloor}\rightarrow X^\Delta_t,
\end{equation}
where the convergence result is a simple consequence of the fact that $Y^l$ is merely a random walk on a section of a tetrahedral lattice.

\section{Proof of Theorems \ref{t:mr1} and \ref{t:mr2}}\label{sec3}

In this section, we will prove the functional scaling limit of Theorem \ref{t:mr1} and the description of the scaling factors given by Theorem \ref{t:mr2}. In order to make rigourous the sketch proof of Theorem \ref{t:mr1} presented at the end of previous section, we start by giving the scaling limit of the discrete-time random walk $Y^l$. The proof involves quite standard techniques, but we include it for completeness. (In the case of two dimensions, a much easier argument is possible using the symmetries of the triangle.)

\begin{propn}\label{p:yscaling} As $l\rightarrow \infty$,
\[\left(Y^l_{\lfloor dl^2t\rfloor}\right)_{t\geq 0}\rightarrow \left(X^\Delta_t\right)_{t\geq 0}\]
in distribution with respect to the usual Skorohod $J_1$-topology on $D([0,\infty),\Delta)$, the space of cadlag functions on $\Delta$ (equipped with the Euclidean distance).
\end{propn}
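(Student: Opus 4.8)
The plan is to recognise $Y^l$ as simple random walk on a fine piece of the tetrahedral lattice and to run a standard invariance-principle argument, with the only genuine work being the treatment of the reflecting boundary. First I would record the increment statistics. For a vertex $x\in V_1^l$ in the interior of $\Delta$, the neighbours of $x$ in the graph $V_1^l$ are exactly those of the infinite tetrahedral lattice, so a single step $\xi$ has mean zero by symmetry and, since the point symmetry group of the lattice acts irreducibly on $\mathbb{R}^d$, its covariance is forced to be isotropic; as every step has length $l^{-1}$ one gets $\mathbf{E}(\xi\xi^T)=\frac{l^{-2}}{d}\,\mathrm{Id}$. Taylor expanding the discrete generator $L^lf(x)=\mathrm{deg}(x)^{-1}\sum_{y\sim x}(f(y)-f(x))$ for $f\in C^2(\Delta)$ then gives $L^lf(x)=\frac{l^{-2}}{2d}\Delta f(x)+o(l^{-2})$ at interior points, so after the time change by $\lfloor dl^2t\rfloor$ the effective generator is the Laplacian multiplied by $\tfrac12$. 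This is precisely why the scaling $dl^2$ appears, and why it matches the normalisation of $X^\Delta$ as standard (rather than variance-normalised) Brownian motion.

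Next I would establish tightness of the laws of $(Y^l_{\lfloor dl^2t\rfloor})_{t\ge0}$ on $D([0,\infty),\Delta)$. Since $\Delta$ is compact this reduces to an oscillation estimate, which I would obtain by splitting $Y^l$ into its martingale part and a finite-variation ``reflection'' part supported on steps taken from boundary vertices. The martingale part has predictable quadratic variation of order $l^{-2}$ per step, so maximal inequalities for martingales yield a Kolmogorov-type modulus-of-continuity bound, while the individual increments are deterministically bounded by $l^{-1}$, ruling out jumps in the limit.

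To identify the limit I would use the submartingale formulation of reflected Brownian motion in the sense of Stroock--Varadhan. For each $f\in C^2(\Delta)$ whose normal derivative vanishes on $\partial\Delta$, I would show that $f(Y^l_{\lfloor dl^2\cdot\rfloor})-f(0)-\int_0^\cdot \tfrac12\Delta f(Y^l_{\lfloor dl^2s\rfloor})\,ds$ is an asymptotic martingale. The interior contribution is handled by the generator computation above; the point of imposing the Neumann condition $\partial_n f=0$ is that the first-order boundary correction coming from steps at $\partial\Delta$ is proportional to $\partial_n f$ (in the continuum limit it would become the boundary local-time term), and hence is negligible. Combined with tightness and the uniqueness of reflected Brownian motion $X^\Delta$ on the regular simplex, this yields convergence of the finite-dimensional distributions and hence the claimed weak convergence. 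Alternatively, since $Y^l$ is reversible with respect to the degree measure, one could phrase the argument through Mosco convergence of the discrete Dirichlet forms to $\tfrac12\int_\Delta|\nabla f|^2$ with Neumann boundary data.

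The main obstacle is the boundary analysis, and specifically the geometry of $\partial\Delta$. Away from the lower-dimensional faces the reflection is across a single hyperplane and the argument above applies cleanly, but one must also control the walk near edges, corners and the other lower-dimensional faces where several faces of the simplex meet. The process $X^\Delta$ spends zero Lebesgue-time on this skeleton, and the random walk visits the corresponding neighbourhoods only rarely; quantifying this so that the excess boundary terms vanish in the limit is the delicate step. In $d=2$ this difficulty is bypassed entirely, since the equilateral triangle is the fundamental domain of a Euclidean reflection group preserving the triangular lattice, so $Y^l$ can be unfolded to an ordinary unconstrained random walk on the full lattice and the result then follows from the classical Donsker invariance principle together with the continuity of the folding map.
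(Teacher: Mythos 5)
Your outline is a genuinely different route from the paper's: you propose a direct Stroock--Varadhan submartingale-problem argument (interior generator computation plus Neumann test functions), whereas the paper feeds the walk into the general machinery of \cite[Theorem 2.1]{CCK} for convergence to reflected diffusions. In that framework the quantitative engine is a discrete Poincar\'{e} inequality on Euclidean balls in the cone-restricted tetrahedral lattice, which via volume doubling and Delmotte's theorem gives two-sided Gaussian heat kernel bounds, hence exit-time estimates, an elliptic Harnack inequality and H\"{o}lder continuity of harmonic functions; these are combined with convergence of the killed walks to killed Brownian motion and a Dirichlet-form comparison. Your interior computation (isotropic covariance $\frac{l^{-2}}{d}\mathrm{Id}$, generator $\frac{1}{2}\Delta$ after the $dl^2$ time change) is correct and explains the normalisation, and the $d=2$ unfolding remark is essentially the ``much easier argument'' the paper alludes to --- though note that folding the free walk on the triangular lattice does \emph{not} reproduce $Y^l$ exactly, since the reflection identifies pairs of neighbours and so changes the one-step distribution at boundary vertices; one still has to argue the two walks have the same limit.

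The genuine gap is that the boundary estimates your argument hinges on are named but never supplied, and without them two steps fail. First, tightness: your decomposition of $Y^l$ into a martingale plus a finite-variation ``reflection'' part controls only the martingale. The drift at a boundary vertex has magnitude of order $l^{-1}$ (the same order as a single step), so over a time window of $dl^2\delta$ steps the finite-variation part is a priori of order $l\delta$; it is bounded only because the walk visits the boundary layer on roughly a $\sqrt{n}$-in-$n$-steps schedule, and proving that requires an occupation-time or Green-function estimate (in the paper this role is played by the Gaussian bounds coming from the Poincar\'{e} inequality). Second, identification: even granting that the local drift at a face-interior boundary vertex is purely normal (which needs the symmetry of the restricted neighbourhood to be checked for the $d$-dimensional tetrahedral lattice --- the lattice is \emph{not} reflection-symmetric across the faces of $\Delta$ for $d\geq 3$, so this is not automatic from unfolding), the contributions from the codimension-$\geq 2$ skeleton carry an $O(l^{-1})$ drift with no Neumann cancellation, and showing they vanish again requires a quantitative bound on the expected number of visits to an $O(l^{d-2})$-vertex set over $O(l^2)$ steps. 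You explicitly defer exactly these points as ``the delicate step,'' but they are the content of the proof rather than a technicality, so as written the argument is incomplete.
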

\begin{proof}
We will apply the general result for convergence to a reflected diffusion of \cite[Theorem 2.1]{CCK}, checking the conditions discussed at the end of \cite[Section 2]{CCK}. (Technically, we should apply this to the continuous time version of $Y^l$ with unit mean exponential holding times, but for the various estimates we require, there is no problem to switch between discrete and continuous time.) We will first show tightness of the sequence $(Y^l_{\lfloor dl^2t\rfloor})_{t\geq 0}$, $l\geq 1$, in the following sense. For every $\varepsilon,T>0$,
\begin{equation}\label{tightnessclaim}
\lim_{\delta>0}\limsup_{l\rightarrow\infty}\sup_{x\in V_1^l}\mathbf{P}\left(\sup_{\substack{0\leq s,t\leq T:\\|s-t|\leq \delta}}\left|Y^l_{\lfloor dl^2t\rfloor}-Y^l_{\lfloor dl^2s\rfloor}\right|>\varepsilon\:\vline\:Y^l_0=x\right)=0.
\end{equation}
To this end, it will be convenient to consider the random walk on the graph with vertex set $V=\cup_{l'\geq 0}l'V_1^{l'}$, with edges between nearest-neighbour vertices. Note that $V$ is simply a section of the tetrahedral lattice contained within a cone. Write $Y=(Y_n)_{n\geq 0}$ for the discrete-time simple random walk on this graph, and $\mu^V$ for the measure on $V$ that assigns each vertex $x\in V$ a mass equal to its graph degree. It is clear that the graph metric on $V$ is bounded above and below by constant multiples of the Euclidean metric. Moreover, writing $\bar{B}(x,r):=\{y:\:|y-x|\leq r\}$ for the closed Euclidean ball, then it holds that $\mu^V(\bar{B}(x,r))\asymp 1+r^d$ for $x\in V$ and $r\geq 0$. In particular, $\mu^V$ is a doubling measure, i.e.\ there exists a constant $C$ such that
\begin{equation}\label{vd}
\mu^V\left(\bar{B}(x,2r)\right)\leq C\mu^V\left(\bar{B}(x,r)\right),\qquad \forall x\in V,\:r\geq 0.
\end{equation}
Applying \cite[Lemma 3.29 and Lemma 3.31]{barbook}, we further deduce that, for every $x\in V$ and $r\geq 0$,
\[\frac{1}{\mu^V\left(\bar{B}(x,r)\right)}\sum_{y,z\in V\cap \bar{B}(x,r)}\left(f(y)-f(z)\right)^2\leq C R_{\bar{B}(x,r)}^{-2}\sum_{\substack{y,z\in V\cap \bar{B}(x,r):\\|y-z|=1}}\left(f(y)-f(z)\right)^2,\]
where $R_{\bar{B}(x,r)}$ is the relative isoperimetric constant for $\bar{B}(x,r)$ of \cite[Definition 3.21]{barbook}. From \cite[Theorem 3.24]{barbook}, we have that
\[R_{\bar{B}(x,r)}\geq \frac{\left|V\cap \bar{B}(x,r)\right|}{\kappa_{\bar{B}(x,r)}},\]
where $\kappa_{\bar{B}(x,r)}$ is given by
\[\kappa_{\bar{B}(x,r)}:=\min_{\Gamma}\max_{\substack{y,z\in V\cap \bar{B}(x,r):\\|y-z|=1}}\left|\left\{u,v\in V\cap \bar{B}(x,r):\: \{y,z\}\in \gamma_{u,v}\right\}\right|,\]
with the first infimum being taken over collections $\Gamma$ of nearest neighbour paths with vertices in $ V\cap \bar{B}(x,r)$ that contain, for each pair $u,v\in  V\cap \bar{B}(x,r)$ an element $\gamma_{u,v}$ connecting $u$ to $v$.
Adapting the argument of \cite[Theorem 3.25]{barbook} from the square to the tetrahedral lattice, it is an elementary exercise to check that
\[\kappa_{\bar{B}(x,r)}\leq C\left| V\cap \bar{B}(x,r)\right|r.\]
Hence, we see that $R_{\bar{B}(x,r)}\geq C r^{-1}$, which implies in turn that, for every $x\in V$ and $r\geq 0$,
\begin{equation}\label{pi}
\frac{1}{\mu^V\left(\bar{B}(x,r)\right)}\sum_{y,z\in V\cap \bar{B}(x,r)}\left(f(y)-f(z)\right)^2\leq C r^{2}\sum_{\substack{y,z\in V\cap \bar{B}(x,r):\\|y-z|=1}}\left(f(y)-f(z)\right)^2.
\end{equation}
Now, \eqref{pi} is the Poincar\'{e} inequality for the graph $V$. It is well-known that in conjunction with the volume doubling property of \eqref{vd} and the fact the graph $V$ has bounded degree, this implies that the random walk $Y$ admits two-sided Gaussian transition density estimates (see \cite[Theorem 1.7]{Delmotte}). Furthermore, two-sided Gaussian estimates imply the following exit time estimates: for every $x\in V$, $r\geq 1$,
\begin{equation}\label{exitupper}
\sup_{y\in V\cap \bar{B}(x,r)}\mathbf{E}\left(\bar{\sigma}^{Y}(x,r)\:\vline\:Y_0=y\right)\leq c_1r^2,
\end{equation}
\[\mathbf{E}\left(\bar{\sigma}^{Y}(x,r)\:\vline\:Y_0=x\right)\geq c_2r^2,\]
where $\bar{\sigma}^Y(x,r):=\inf\{n\geq 0:\:Y_n\not\in  \bar{B}(x,r)\}$ (see \cite[Theorem 3.1]{GT1} and \cite[Proposition 6.1]{GT2}). In turn, these yield the following estimate: for every $x\in V$, $r\geq 1$ and $t>0$,
\[\mathbf{P}\left(\bar{\sigma}^{Y}(x,r)\leq t\:\vline\:Y_0=x\right)\leq Ce^{-cr^2/t},\]
see the argument of \cite[Theorem 5.5]{BBg}, for example. Clearly, the latter bound implies: for every $x\in V$, $r\geq 1$ and $t>0$,
\[\mathbf{P}\left(\sup_{0\leq n\leq t}\left|Y_n-x\right|>r\:\vline\:Y_0=x\right)\leq Ce^{-cr^2/t}.\]
By appealing to the symmetry of the graphs $V_1^l$, it readily follows that: for $\varepsilon\in(0,\tfrac12)$, $T>0$,
\begin{align*}
\lefteqn{\limsup_{l\rightarrow\infty}\sup_{x\in V_1^l}\mathbf{P}\left(\sup_{\substack{0\leq s,t\leq T:\\|s-t|\leq \delta}}\left|Y^l_{\lfloor dl^2t\rfloor}-Y^l_{\lfloor dl^2s\rfloor}\right|>\varepsilon\:\vline\:Y^l_0=x\right)}\\
&\leq\limsup_{l\rightarrow\infty}\sup_{x\in V}T\delta^{-1}\mathbf{P}\left(\sup_{0\leq n\leq 3dl^2\delta}\left|Y_{n}-x\right|>\varepsilon l\:\vline\:Y_0=x\right)\leq CT\delta^{-1}e^{-c\varepsilon^2/\delta}.
\end{align*}
Since the final expression here converges to 0 as $\delta\rightarrow 0$, the claim in \eqref{tightnessclaim} follows. Appealing to standard criteria for tightness of measures on $D([0,\infty),\Delta)$ (see \cite[Chapter 16]{Kall}, for example), we deduce that \cite[Assumption 2.6]{CCK} holds.

For \cite[Assumption 2.3]{CCK}, we need to check an exit time bound and a H\"{o}lder continuity result for harmonic functions. The first of these is implied by \eqref{exitupper}. As for the second, we again appeal to \cite[Theorem 1]{GT1} to observe that the two-sided Gaussian estimates for the transition density of $Y$ imply an elliptic Harnack inequality, i.e.\ there exists a constant $C<\infty$ such that, for every $x\in V$, $r\geq 1$ and $u:V\rightarrow\mathbb{R}$ that is harmonic in $V\cap\bar{B}(x,2r)$ (with respect to $Y$),
\begin{equation}\label{graphehi}
\sup_{x\in V\cap\bar{B}(x,r) }u(x)\leq C\inf_{x\in V\cap\bar{B}(x,r) }u(x).
\end{equation}
From this, standard arguments imply the existence of a constant $C$ and $\gamma>0$ such that
\[\left|u(x)-u(y)\right|\leq C\left(\frac{|x-y|}{r}\right)^\gamma\left\|u\right\|_{\infty},\qquad \forall x,y\in V\cap\bar{B}(x,r),\]
for every $u:V\rightarrow\mathbb{R}$ that is harmonic in $V\cap\bar{B}(x,2r)$. (We give an example of the argument in the proof of Theorem \ref{T.R1} below.) By scaling, the corresponding estimate holds for functions $u:V_1^l\rightarrow \mathbb{R}$ that are harmonic with respect to $Y^l$, with the constant being uniform over different values of $l$. This confirms \cite[Assumption 2.3]{CCK} in the present setting.

Next, suppose $\tilde{V}$ are the vertices of the tetrahedral lattice obtained by extending $V$ in the obvious way, and let $\tilde{Y}=(\tilde{Y}_n)_{n\geq 0}$ be the associated nearest-neighbour discrete-time random walk, started from 0. It is elementary to check that
\[\left(\varepsilon\tilde{Y}_{\lfloor d\varepsilon^{-2}t\rfloor}\right)_{t\geq 0}\rightarrow \left(X_t\right)_{t\geq 0},\]
where the limit process is standard Brownian motion on $\mathbb{R}^d$, started from 0. From this, it is straightforward to check that, if $\bar{Y}^l$ is $Y^l$ killed on hitting $\partial\Delta$, $\bar{X}^\Delta$ is $X^\Delta$ killed on hitting $\partial\Delta$, and $x_l\in V_1^l$ satisfies $x_l\rightarrow x\in \Delta$, then
\[\mathbf{P}\left(\left(\bar{Y}^l_{\lfloor dl^2t\rfloor}\right)_{t\geq 0}\in \cdot\:\vline\:\bar{Y}^l_0=x_l\right)\rightarrow\mathbf{P}\left(\left(X^\Delta_t\right)_{t\geq 0}\in \cdot\:\vline\:\bar{X}^\Delta_0=x\right),\]
weakly as probability measures on $D([0,\infty),\Delta)$. This confirms condition (ii) of \cite[Theorem 2.1]{CCK}.

It remains to check that the invariant measures of $Y^l$, suitably normalised, converge to the invariant measure of $X^\Delta$, and also that the Dirichlet form of any potential limiting process is bounded above by that of $X^\Delta$. The first of these claims is elementary. As for the second, this is also straightforward to check using the fact that: there exists a constant $c\in (0,\infty)$ such that, for any $f\in C_c^2(\Delta)$,
\[l^{2-d}\sum_{\substack{x,y\in V_1^l:\\|x-y|=l^{-1}}}\left(f(x)-f(y)\right)^2\rightarrow c\int_\Delta\left|\nabla f(x)\right|^2dx,\]
cf.\ the argument of \cite[Proposition 3.16]{CCK}, in which the more challenging example of a percolation cluster was considered. As a consequence, we can conclude that all of the conditions of \cite[Theorem 2.1]{CCK} hold, and the result follows.
\end{proof}

To check the remaining approximation in \eqref{sketch}, it will be useful to suitably understand the effective resistance metrics $R^l$ and the scaling constants $\rho_l$. In this direction, we first recall a result from \cite{HK}.

\begin{lemma}[{\cite[Theorem 2.2]{HK}}]\label{hklem} If $d=2$, then $\rho_l \asymp \log l$. If $d\geq 3$, then $\rho_l \asymp 1$.
\end{lemma}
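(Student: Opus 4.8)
The plan is to identify $\rho_l$ with an effective resistance on a discrete lattice approximation of $\Delta$, and then to estimate that resistance by classical electrical-network techniques, with the dimensional dichotomy entering exactly as the recurrence/transience distinction for $\mathbb{Z}^d$.

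First I would exploit the full symmetry of $\mathrm{SG}(l)$ to reduce the determination of $\rho_l$ to a single resistance computation. Since the symmetry group of $\Delta$ acts transitively on pairs of vertices of $V_0$ and is respected by the subdivision into the cells $(\Delta_j)$, the trace onto $V_0$ of the ``bare'' form $\hat{\mathcal{E}}:=\sum_{j=1}^{N_{d,l}}\mathcal{E}_0(f\circ\psi_j,f\circ\psi_j)$ is a scalar multiple $\lambda\mathcal{E}_0$. The compatibility requirement \eqref{consist} then forces $\rho_l\lambda=1$, so that $\rho_l=\lambda^{-1}$ equals the effective resistance $\hat{R}(x,y)$ between two vertices $x,y\in V_0$ in the level-one network associated with $\hat{\mathcal{E}}$. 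Here I would record the short computation that the complete graph $K_{d+1}$ underlying each cell has corner-to-corner resistance exactly $1$, so that this level-one network is, up to a bounded factor, the nearest-neighbour tetrahedral lattice on $V_1^l$ with unit edge resistances; as noted in the preliminaries, this is precisely the spacing-$l^{-1}$ tetrahedral lattice intersected with $\Delta$. Thus $\rho_l$ is comparable to the point-to-point resistance between two corners of $\Delta$, which lie at graph distance of order $l$.

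With this reduction in hand I would bound the resistance from both sides. For the lower bounds I would use Rayleigh monotonicity: deleting the edges outside $\Delta$ only increases resistance, so $\rho_l$ dominates a constant multiple of the full-lattice resistance $R_{\mathbb{Z}^d}(0,v)$ with $|0-v|\asymp l$. In $d\geq3$ this already gives $\rho_l\geq c$ (indeed the resistance from a vertex to the rest of the graph is at least the reciprocal of its total incident conductance, which is of order $1$), while in $d=2$ the classical estimate $R_{\mathbb{Z}^2}(0,v)=\tfrac{1}{\pi}\log l+O(1)$ yields $\rho_l\geq c\log l$. For the upper bounds I would invoke Thomson's principle, constructing an explicit unit flow from $0$ to $v$ supported inside $\Delta$, routed outward from $0$ through dyadic annuli and symmetrically back into $v$, so that at radius $2^k$ the current is spread over order $2^{k(d-1)}$ edges. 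The energy of such a flow is of order $\sum_k 2^{-k(d-2)}$, which converges (giving $\rho_l\leq C$) when $d\geq3$ and is of order $\log l$, summed over the $\asymp\log l$ relevant scales, when $d=2$.

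I expect the main obstacle to be the upper bound via this flow construction, specifically arranging that the flow remains within $\Delta$. Because the two terminals are vertices lying on the boundary of $\Delta$, the current cannot spread isotropically near the corners, and one must verify that the constrained corner-to-corner flow still achieves the same energy order as the unconstrained lattice flow; the two-dimensional case, where the logarithmic contributions from each scale must be summed and matched against the lower bound, is the most delicate. A clean way to handle this is to transport a known good flow on the full lattice into the cone by reflecting across the boundary faces of $\Delta$, or to build the flow hierarchically using the self-similar cell structure, and I would carry out whichever proves more economical.
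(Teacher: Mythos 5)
Your proposal is correct, but note that the paper does not prove this lemma at all: it is quoted verbatim from \cite[Theorem 2.2]{HK}, so the only comparison available is with that reference and with the fragments of the same argument that the paper does reproduce nearby. Your reduction is exactly the one the paper uses in Lemma \ref{rhobound}: by the $S_{d+1}$-symmetry and the compatibility condition \eqref{consist}, $\rho_l$ is a fixed multiple ($\tfrac{d+1}{2}$, with the paper's normalisation) of the corner-to-corner effective resistance of $V_1^l$ with unit nearest-neighbour resistors, and your two-sided estimate of that resistance is the standard one. For the lower bound the paper shorts concentric shells (Nash--Williams), which is the dual of, and interchangeable with, your Rayleigh-monotonicity comparison with the full tetrahedral lattice; for the upper bound the paper simply defers to \cite[Proposition 9.16 and Exercise 9.1]{LPW}, which is precisely your dyadic-annulus flow via Thomson's principle. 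The one point you rightly flag --- keeping the flow inside the cone at a corner of $\Delta$ --- is genuinely the only place needing care, and it is routine: the cone subtends a fixed positive solid angle, so annuli of radius $r$ inside it still contain $\asymp r^{d-1}$ lattice points and the energy computation goes through with a worse constant. So your argument is a correct, self-contained substitute for the citation, following the same route as \cite{HK}.
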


\begin{remark}\label{resrem}
It will follow from observations made in the proof of Theorem \ref{t:mr2} that
\[\tau^l=\frac{l^2}{2\rho_lN_{d,l}}.\]
From the expression for $N_{d,l}$ in \eqref{ndl}, it follows that
\[\tau^l\sim\frac{d!}{2}l^{2-d}\rho_l^{-1}.\]
Hence to replace $\asymp$ by $\sim$ in Theorem \ref{t:mr2}, as suggested might be an interesting problem in Remark \ref{HKcomp}(b), it would be enough to show that $\rho_l\sim c\log l$ when $d=2$ and that $\rho_l$ has a limit when $d\geq 3$.
\end{remark}

Lemma \ref{hklem} will be sufficient for our purposes when $d=2$. However, for $d\geq 3$, we need slightly more control on the $\rho_l$, and in particular need that they are uniformly bounded away from 1.

\begin{lemma}\label{rhobound} For all $d\geq 2$,
\[\inf_{l\geq 2}\rho_l>1.\]
\end{lemma}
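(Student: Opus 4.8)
The plan is to reduce the scaling factor $\rho_l$ to an effective resistance on an explicit finite network, and then bound that resistance from below using the Nash--Williams cutset inequality. First I would unwind the definition of $\rho_l$. By the self-similarity and symmetry of $\mathrm{SG}(l)$, the factor $\rho_l$ is characterised by the requirement that the effective resistance on $V_0$ induced by $\mathcal{E}_1^l$ agree with $R_0$. Concretely, let $G^l$ denote the finite network on $V_1^l$ in which, for each cell $\Delta_j$, every pair of the $d+1$ vertices of $\Delta_j$ is joined by a conductor of conductance $\tfrac{2}{d+1}$; this is exactly the network whose energy is $\sum_{j=1}^{N_{d,l}}\mathcal{E}_0(f\circ\psi_j,f\circ\psi_j)$. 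Writing $\hat R^l(x,y)$ for the effective resistance in $G^l$, the relation \eqref{edef} gives $R_1^l(x,y)=\rho_l^{-1}\hat R^l(x,y)$, so imposing $R_1^l\equiv R_0\equiv 1$ on distinct pairs in $V_0$ forces
\[
\rho_l=\hat R^l(x_0,x_1),
\]
the effective resistance between two vertices $x_0,x_1$ of the original tetrahedron $\Delta$ in the network $G^l$. (Here I use that, by symmetry, $\hat R^l(x,y)$ is the same for every pair of distinct $x,y\in V_0$.)

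It then remains to show $\hat R^l(x_0,x_1)>1$ uniformly in $l$. For this I would apply the Nash--Williams inequality: if $\Pi_1,\Pi_2$ are edge-disjoint cutsets each separating $x_0$ from $x_1$ in $G^l$, then
\[
\hat R^l(x_0,x_1)\ \geq\ \Big(\sum_{e\in\Pi_1}c_e\Big)^{-1}+\Big(\sum_{e\in\Pi_2}c_e\Big)^{-1},
\]
where $c_e$ denotes the conductance of the edge $e$. I would take $\Pi_1$ to be the set of edges of $G^l$ incident to $x_0$ and $\Pi_2$ the set of edges incident to $x_1$. The key combinatorial observation is that a corner of $\Delta$ is a vertex of exactly one cell $\Delta_j$ (the adjacent same-orientation cell), so $x_0$ has precisely $d$ incident edges, each of conductance $\tfrac{2}{d+1}$; hence $\sum_{e\in\Pi_1}c_e=\tfrac{2d}{d+1}$, and likewise for $x_1$. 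Since for $l\geq 2$ the corners $x_0,x_1$ lie at Euclidean distance $1$ while every edge of $G^l$ has length $l^{-1}<1$, the two corners share neither an edge nor a cell, so $\Pi_1$ and $\Pi_2$ are genuinely edge-disjoint. Nash--Williams then yields
\[
\rho_l=\hat R^l(x_0,x_1)\ \geq\ \frac{d+1}{2d}+\frac{d+1}{2d}=\frac{d+1}{d}>1
\]
uniformly in $l\geq 2$, giving $\inf_{l\geq2}\rho_l\geq \tfrac{d+1}{d}$.

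I expect the only real subtleties to be the reduction in the first paragraph (keeping track of normalisation constants so that $\rho_l$ is exactly the corner-to-corner resistance of $G^l$, which uses that the symmetry of $\mathrm{SG}(l)$ collapses the decimation fixed point to a single scalar) together with the observation that a single cutset is \emph{not} enough: the bound from $\Pi_1$ alone is only $\tfrac{d+1}{2d}$, which is $<1$ when $d=2$, so it is essential to exploit both corners. Once one uses the two disjoint corner cutsets the argument is completely elementary, and it pleasingly handles all $d\geq 2$ at once; in particular it reproves the relevant bound in the $d=2$ case otherwise covered by Lemma \ref{hklem}. As a sanity check, for $d=2$ it gives $\rho_l\geq \tfrac32$, consistent with the known value $\rho_2=\tfrac53$.
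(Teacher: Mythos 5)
Your proof is correct. The reduction in your first paragraph is exactly the one the paper uses: from \eqref{edef} and the consistency \eqref{consist}, $\rho_l$ equals the corner-to-corner effective resistance of the level-one network built from copies of $\mathcal{E}_0$ (the paper phrases this with unit resistors as $R_{V_1^l}(0,x)=\tfrac{2\rho_l}{d+1}$, which is the same statement up to the normalisation you absorb into the conductances). Where you diverge is in the lower bound. The paper applies Rayleigh monotonicity and the parallel law to reduce to twice the resistance from a corner to the set at graph distance $\lfloor l/2\rfloor$, and then shorts \emph{every} sphere around the corner, yielding the series $\sum_{i\le \lfloor l/2\rfloor}\frac{1}{dN_{d-1,i}}$; this converges only for $d\geq 3$ (giving $\liminf_l\rho_l\geq\frac{(d+1)(d-1)}{d(d-2)}$), so the paper handles $d=2$ separately via Lemma \ref{hklem}. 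Your Nash--Williams argument with the two corner cutsets is, in effect, the $i=1$ term of that series applied at both corners, and it is enough: since each corner of $\Delta$ lies in exactly one cell, you get $\rho_l\geq\frac{d+1}{d}$ for every $l\geq 2$ and every $d\geq 2$ in one stroke. This buys you three things: a single argument covering $d=2$ without invoking Lemma \ref{hklem}, an explicit bound valid for each fixed $l$ (so you do not need the paper's final step of combining a $\liminf$ estimate with the fact that each individual $\rho_l>1$), and a shorter proof; the cost is a weaker constant for $d\geq 3$, which is irrelevant for the statement of the lemma. Your checks of the two genuinely load-bearing points --- that each corner has exactly $d$ incident edges of conductance $\tfrac{2}{d+1}$, and that the two corner cutsets are edge-disjoint because no cell contains two corners of $\Delta$ --- are both sound.
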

\begin{proof} Since we know that $\rho_l>1$ for each fixed $l$, the result for $d=2$ immediately follows from Lemma \ref{hklem}. Thus we restrict to the case when $d\geq 3$. For each $l$, suppose $V_1^l$ is the vertex set of a graph, equipped with nearest-neighbour edges. See Figure \ref{vl1fig}. Moreover, suppose $(R_{V_1^l}(x,y))_{x,y\in V_1^l}$ is the effective resistance on $V_1^l$ when edges are equipped with unit resistors. By \eqref{edef} and \eqref{consist}, we have that, for any $x\in V_0\backslash\{0\}$,
\[R_{V_1^l}(0,x)=\frac{2\rho_l}{d+1}R_{1}^l(0,x)=\frac{2\rho_l}{d+1}R_0(0,x)=\frac{2\rho_l}{d+1},\]
and so to prove the result, we are required to find a suitable lower bound for $R_{V_1^l}(0,x)$. To this end, fix $x\in V_0\backslash\{0\}$ and suppose $\Delta^l_0$ and $\Delta^l_1$ are the vertices in $V_1^l$ at graph distance $\lfloor l/2 \rfloor$ from 0 and $x$, respectively. (Again, see Figure \ref{vl1fig}.) Now, by Rayleigh's monotonicity principle and the parallel law (see \cite[Chapter 9]{LPW}, for example),
\[R_{V_1^l}(0,x)\geq R_{V_1^l}\left(0,\Delta_0^l\right)+R_{V_1^l}\left(\Delta_1^l,x\right)=2 R_{V_1^l}\left(0,\Delta_0^l\right).\]
By shorting vertices at each fixed distance from 0, it further holds that
\[R_{V_1^l}\left(0,\Delta_0^l\right)\geq \sum_{i=1}^{\lfloor l/2 \rfloor}\frac{1}{dN_{d-1,{i}}}.\]
Applying \eqref{ndl}, this yields that
\[R_{V_1^l}\left(0,\Delta_0^l\right)\geq  \frac{(d-1)!}{d}\sum_{i=1}^{\lfloor l/2 \rfloor}\frac{1}{i(i+1)\dots(i+d-2)},\]
and clearly the right-hand side here converges as $l\rightarrow\infty$ to
\[ \frac{(d-1)!}{d}\sum_{i=1}^{\infty}\frac{1}{i(i+1)\dots(i+d-2)}=\frac{d-1}{d(d-2)}.\]
Combining the above observations, we deduce that
\[\liminf_{l\rightarrow\infty}\rho_l\geq \frac{(d+1)(d-1)}{d(d-2)}>1.\]
Since $\rho_l>1$ for each fixed $l$, this completes the proof.
\end{proof}

\begin{figure}[t]
  \centering
  \includegraphics[width=0.9\textwidth]{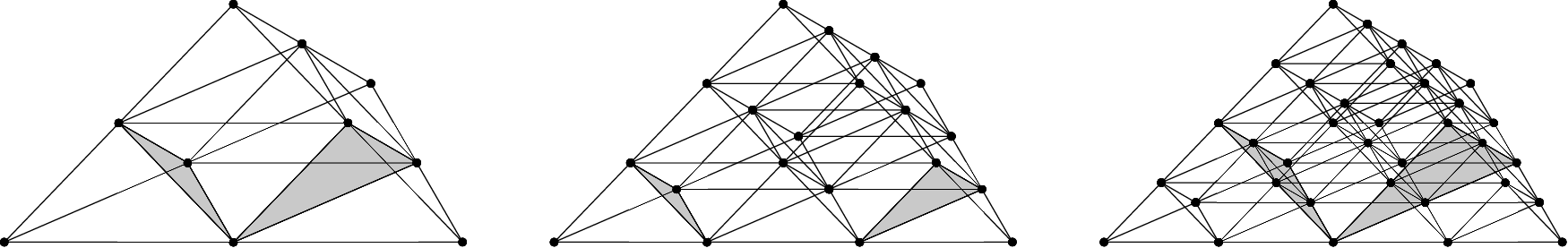}
  \caption{The sets $V^2_1$, $V^3_1$ and $V^4_1$ in three dimensions. In each case, the set $V_0$ consists of the four extremal vertices of the tetrahedron. The shaded regions show $\Delta^l_0$ and $\Delta^l_1$ when the resistance being estimated is that between the vertices in the bottom left and right of the figure.}\label{vl1fig}
\end{figure}

From the above estimates on $\rho_l$, we are able to deduce the following uniform bound on the diameters of the spaces $\mathrm{SG}(l)$ in terms of their effective resistance metrics. This result is fundamental for the arguments that follow.

\begin{lemma}\label{diambound} For all $d\geq 2$,
\[\sup_{l\geq 2}\sup_{x,y\in \mathrm{SG}(l)}R^l(x,y)<\infty.\]
\end{lemma}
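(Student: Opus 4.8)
The plan is to reduce the diameter to a bound on the resistance from an arbitrary point to the nearest boundary vertex, and then to control the latter by a self-similar recursion. Since $R^l=R_0$ on $V_0$ and $R_0$ is the discrete metric, we have $R^l(u,v)=1$ for distinct $u,v\in V_0$, so the triangle inequality for the metric $R^l$ gives, for any $x,y\in\mathrm{SG}(l)$,
\[
R^l(x,y)\le \min_{c\in V_0}R^l(x,c)+1+\min_{c\in V_0}R^l(y,c)\le 2\tilde B_l+1,
\]
where $\tilde B_l:=\sup_{x\in\mathrm{SG}(l)}\min_{c\in V_0}R^l(x,c)$. It therefore suffices to bound $\tilde B_l$ uniformly in $l$.

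For the recursion, I would decompose $\mathrm{SG}(l)$ into its $N_{d,l}$ level-one cells $\psi_j(\mathrm{SG}(l))$. By \eqref{edef} the energy of the whole gasket dominates $\rho_l$ times the energy contributed by a single cell, so the variational characterisation \eqref{rldef} together with Rayleigh monotonicity yields the scaling bound $R^l(\psi_j(w),\psi_j(a))\le \rho_l^{-1}R^l(w,a)$ for all $w,a$. Given $x$ lying in cell $j$, write $x=\psi_j(w)$, choose $a\in V_0$ attaining $\min_a R^l(w,a)\le \tilde B_l$, and note $\psi_j(a)\in V_1^l$. Routing from $x$ to $\psi_j(a)$ costs at most $\rho_l^{-1}\tilde B_l$, and by the consistency \eqref{consist} the further cost from $\psi_j(a)$ to a nearest vertex of $V_0$ is at most $\hat B_l:=\max_{v\in V_1^l}\min_{c\in V_0}R_1^l(v,c)$. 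The triangle inequality then gives $\tilde B_l\le \rho_l^{-1}\tilde B_l+\hat B_l$, that is,
\[
\tilde B_l\le \frac{\hat B_l}{1-\rho_l^{-1}}=\Big(1+\frac{1}{\rho_l-1}\Big)\hat B_l.
\]
Crucially, Lemma \ref{rhobound} gives $\inf_{l}\rho_l>1$, so the prefactor is bounded uniformly in $l$, and the problem reduces to bounding $\hat B_l$.

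To estimate $\hat B_l$, I would observe that, since each nearest-neighbour edge of $V_1^l$ belongs to exactly one cell, expanding \eqref{edef} with $\mathcal{E}^l_0=\mathcal{E}_0$ shows $\mathcal{E}_1^l=\tfrac{2\rho_l}{d+1}\,\mathcal{E}^{\mathrm{unit}}_{V_1^l}$, where $\mathcal{E}^{\mathrm{unit}}_{V_1^l}$ is the unit-conductance Dirichlet form on the graph $V_1^l$ used in the proof of Lemma \ref{rhobound}. Hence $R_1^l=\tfrac{d+1}{2\rho_l}R_{V_1^l}$ and $\hat B_l=\tfrac{d+1}{2\rho_l}\max_{v\in V_1^l}\min_{c\in V_0}R_{V_1^l}(v,c)$. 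It then remains to show that the resistance from an arbitrary vertex of $V_1^l$ to its nearest corner grows no faster than $\rho_l$: combining the standard estimate that this resistance is $O(\log l)$ when $d=2$ and $O(1)$ when $d\ge 3$ with $\rho_l\asymp \log l$ ($d=2$) from Lemma \ref{hklem} and $\inf_l\rho_l>1$ ($d\ge3$) from Lemma \ref{rhobound}, one obtains $\hat B_l=O(1)$, and the result follows.

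The main obstacle is this last finite-lattice estimate, namely the uniform upper bound on $\max_{v}\min_{c}R_{V_1^l}(v,c)$. The cutting and shorting arguments used for the lower bounds in the proof of Lemma \ref{rhobound} run the wrong way here, and one cannot simply compare with the infinite tetrahedral lattice, since passing to a subgraph only increases effective resistance. Instead I would produce the bound via Thomson's principle, exhibiting a unit flow from the central vertex to a nearest corner that spreads out over a radially growing number of edges; its energy is of order $\sum_{r\le l} r^{-(d-1)}$, which is $O(\log l)$ for $d=2$ and $O(1)$ for $d\ge3$. The bookkeeping for this flow, ensuring it remains within the finite tetrahedral region and terminates on $V_0$, is the most delicate step; alternatively, one notes that this resistance is comparable to the corner-to-corner resistance, which by \eqref{edef} and \eqref{consist} equals $\tfrac{2\rho_l}{d+1}$ and is thus precisely of order $\rho_l$.
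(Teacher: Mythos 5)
Your argument is correct and is essentially the paper's proof in a slightly different packaging: the paper telescopes $R^l(x,y)$ over all scales via approximating sequences $x_m\in V_m^l$ and sums the geometric series $\sum_{m}\rho_l^{-m}$, whereas you encode the same geometric decay in the one-step inequality $\tilde B_l\le \rho_l^{-1}\tilde B_l+\hat B_l$ (valid since $\tilde B_l<\infty$ for each fixed $l$ by compactness). All the substantive ingredients coincide — the contraction $R^l(\psi_j(w),\psi_j(a))\le\rho_l^{-1}R^l(w,a)$ from \eqref{edef}, the uniform bound on $(1-\rho_l^{-1})^{-1}$ from Lemma \ref{rhobound}, the identity $R_1^l=\tfrac{d+1}{2\rho_l}R_{V_1^l}$, and the flow/Thomson bound of order $\log l$ (resp.\ $O(1)$) on the level-one lattice cancelled against $\rho_l$ via Lemma \ref{hklem} — the last being exactly the ``elementary arguments'' the paper delegates to its cited reference on effective resistance in lattices.
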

\begin{proof} For a given $i\in \mathbb{N}$, we say that the sets of the form
\[\psi_{j_1}\circ\cdots\circ\psi_{j_i}(\Delta),\qquad j_1,\dots,j_i\in\{1,\dots,N_{d,l}\},\]
are $i$-cells; we also say that $\Delta$ is a 0-cell. Clearly, for each $x\in \mathrm{SG}(l)$, we can find a sequence $(x_i)_{i\geq 0}$ such that, for each $i\geq 0$, $x_i\in V_i^l$ (where $V_0^l:=V_0$), and $x$, $x_i$ and $x_{i+1}$ are in the same $i$-cell. It thus obviously holds that $x_i\rightarrow x$ with respect to the Euclidean metric, and, since this is topologically equivalent to the effective resistance metric, then $R^l(x_i,x)\rightarrow 0$. Supposing $(y_i)_{i\geq 0}$ is a similarly constructed sequence for $y\in \mathrm{SG}(l)$, it follows that
\begin{eqnarray*}
R^l(x,y)&=&\lim_{i\rightarrow\infty}R^l(x_i,y_i)\\
&\leq& \lim_{i\rightarrow\infty}\left(R^l(x_0,y_0)+\sum_{m=0}^{i-1}R^l(x_m,x_{m+1})+\sum_{m=0}^{i-1}R^l(y_m,y_{m+1})\right)\\
&\leq&1+2\sum_{m=0}^{\infty}\sup_{x',y'\in V_1^l}\sup_{j_1,\dots,j_m\in\{1,\dots,N_{d,l}\}}R^l\left(\psi_{j_1}\circ\cdots\circ\psi_{j_m}(x'),\psi_{j_1}\circ\cdots\circ\psi_{j_m}(y')\right).
\end{eqnarray*}
Now, by the self-similarity of the construction of the resistance form, i.e.\ \eqref{edef}, it holds that
\[R^l\left(\psi_{j_1}\circ\cdots\circ\psi_{j_m}(x'),\psi_{j_1}\circ\cdots\circ\psi_{j_m}(y')\right)\leq \rho_l^{-m}R^l\left(x',y'\right).\]
Hence, we have shown that
\[\sup_{x,y\in \mathrm{SG}(l)}R^l(x,y)\leq 1+2\sum_{m=0}^{\infty}\rho_l^{-m}\sup_{x',y'\in V_1^l}R^l\left(x',y'\right).\]
Consequently, since Lemma \ref{rhobound} implies that $\sum_{m=0}^{\infty}\rho_l^{-m}=(1-\rho_l^{-1})^{-1}$ is uniformly bounded over $l\geq 2$, to complete the proof it will suffice to show that the same is true of the supremum on the right-hand side above. Using the same notation as in the proof of Lemma \ref{rhobound}, we have that
\[\sup_{x',y'\in V_1^l}R^l\left(x',y'\right)=\frac{d+1}{2\rho_l}\sup_{x',y'\in V_1^l}R_{V_1^l}\left(x',y'\right).\]
And, by elementary arguments (cf.\ \cite[Proposition 9.16 and Exercise 9.1]{LPW}), one can check that
\[\sup_{x',y'\in V_1^l}R_{V_1^l}\left(x',y'\right)\leq \left\{
                                                         \begin{array}{ll}
                                                           C_d, & \hbox{for $d\geq 3$;} \\
                                                           C_2\log l, & \hbox{for $d=2$,}
                                                         \end{array}
                                                       \right.\]
where $C_d$ is a dimension dependent constant. Applying these estimates in conjunction with Lemma \ref{hklem}, we obtain the desired result.
\end{proof}

We are now able to show that the stopping times $T^l_n$, which were introduced in Subsection \ref{23sec}, concentrate on their means in the following manner. The constant $\tau^l$ was defined in \eqref{tauldef}.

\begin{propn}\label{tprop} For any $T>0$, as $l\rightarrow \infty$,
\[\frac{\sup_{n\leq l^2 T}\left|T_n^l-n\mathbf{E}(T_1^l)\right|}{\tau^l}\rightarrow 0\]
in probability.
\end{propn}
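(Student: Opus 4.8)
The plan is to establish the concentration of $T_n^l$ around its mean $n\mathbf{E}(T_1^l)$ via a second-moment argument, exploiting the fact that $(T_n^l - T_{n-1}^l)_{n\geq 1}$ are the successive inter-vertex crossing times, which by the strong Markov property and the symmetries of $\mathrm{SG}(l)$ form a stationary (indeed identically distributed) sequence of increments indexed by the path of the random walk $Y^l$. First I would write $T_n^l = \sum_{k=1}^n \xi_k^l$ where $\xi_k^l := T_k^l - T_{k-1}^l$, and observe that each $\xi_k^l$ is distributed as the time $X^l$ takes to traverse from one vertex of $V_1^l$ to a neighbouring vertex, so that $\mathbf{E}(\xi_k^l) = \mathbf{E}(T_1^l)$. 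The goal is then to control $\sup_{n\leq l^2 T}|\sum_{k=1}^n(\xi_k^l - \mathbf{E}(\xi_k^l))|/\tau^l$ in probability.

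The key step is a variance bound. Since the $\xi_k^l$ are not independent (they depend on the current and next vertex of $Y^l$), but conditionally on the trajectory of $Y^l$ each increment is an independent crossing time, I would compute $\mathrm{Var}(T_n^l)$ by conditioning on $Y^l$ and bounding both the conditional variances and the covariances coming from the Markov-chain correlations. The crucial quantitative input is the size of $\mathbf{E}(T_1^l)$ and $\mathbf{E}((T_1^l)^2)$ relative to $\tau^l$. From the commute time identity \eqref{commute} and the definition $\tau^l = d l^2 \mathbf{E}(T_1^l)$, we have $\mathbf{E}(T_1^l) = \tau^l l^{-2} d^{-1}$, and the second moment of a crossing time in a resistance form is controlled by (the product of the resistance diameter and) $\mathbf{E}(T_1^l)$; here Lemma \ref{diambound} is essential, since it provides a \emph{uniform-in-$l$} bound on the resistance diameter of $\mathrm{SG}(l)$, which in turn bounds $\mathbf{E}((T_1^l)^2)$ by a constant multiple of $(\mathbf{E}(T_1^l))^2$ up to the diameter factor. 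Summing over $n\leq l^2 T$ gives $\mathrm{Var}(T_{\lfloor l^2 T\rfloor}^l) \lesssim l^2 T \cdot \mathbf{E}((T_1^l)^2)$, which after dividing by $(\tau^l)^2 = d^2 l^4 (\mathbf{E}(T_1^l))^2$ produces a bound of order $T/l^2$ (times the diameter constant), vanishing as $l\to\infty$.

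To upgrade the pointwise-in-$n$ variance bound to the supremum over $n\leq l^2 T$, I would apply a maximal inequality: the centred partial sums $M_n^l := T_n^l - n\mathbf{E}(T_1^l)$ are a martingale with respect to the natural filtration (each increment is conditionally mean-zero after centring, by the strong Markov property and the fact that the expected crossing time is the same from every interior vertex by symmetry), so Doob's $L^2$ maximal inequality yields $\mathbf{E}(\sup_{n\leq l^2 T}|M_n^l|^2) \leq 4\,\mathbf{E}(|M_{\lfloor l^2 T\rfloor}^l|^2)$. Combining with the variance estimate and Chebyshev's inequality gives, for any $\varepsilon > 0$,
\[
\mathbf{P}\left(\frac{\sup_{n\leq l^2 T}|T_n^l - n\mathbf{E}(T_1^l)|}{\tau^l} > \varepsilon\right) \leq \frac{4\,\mathrm{Var}(T_{\lfloor l^2 T\rfloor}^l)}{\varepsilon^2 (\tau^l)^2} \to 0,
\]
which is exactly the claimed convergence in probability.

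The main obstacle I anticipate is the second-moment estimate for the single crossing time $T_1^l$, specifically showing $\mathbf{E}((T_1^l)^2) \leq C (\mathbf{E}(T_1^l))^2$ with $C$ uniform in $l$. A naive bound on the second moment of a hitting time in a resistance space typically involves the product of the total mass, the resistance diameter, and the first moment; one must check that the relevant geometric quantities ($\mu^l$-mass, which is normalised to one, together with the resistance diameter bounded by Lemma \ref{diambound}) combine to give a clean comparison with $(\mathbf{E}(T_1^l))^2$ rather than merely with $\mathbf{E}(T_1^l)$. If the crude resistance-form bound only yields $\mathbf{E}((T_1^l)^2) \leq C\,\mathbf{E}(T_1^l)$, then the variance of the sum is of order $l^2 T\cdot\mathbf{E}(T_1^l) = d^{-1}\tau^l T$, and dividing by $(\tau^l)^2$ still gives a bound of order $T/(\tau^l l^2)$—which vanishes provided $\tau^l$ does not decay too fast; by Theorem \ref{t:mr2}, $\tau^l \gtrsim (\log l)^{-1}$ or $l^{2-d}$, so one must verify $\tau^l l^2 \to \infty$ in every dimension, which indeed holds. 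Reconciling these two candidate bounds and selecting the one that rigorously gives decay in all dimensions is where the care is required.
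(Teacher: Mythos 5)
Your proposal follows essentially the same route as the paper: write $T_n^l-n\mathbf{E}(T_1^l)$ as a martingale, apply Doob's $L^2$ inequality together with Chebyshev, and reduce the problem to the uniform-in-$l$ comparison $\mathbf{E}\left((T_1^l)^2\right)\leq C\left(\mathbf{E}(T_1^l)\right)^2$. The one step you leave open is closed in the paper by rescaling: $T_1^l$ is equal in distribution to $\rho_l^{-1}N_{d,l}^{-1}\sigma^l(V_0\backslash\{0\})$, the mean of $\sigma^l(V_0\backslash\{0\})$ is the absolute constant $1/2d$, and its second moment is uniformly bounded because $t_l:=\sup_x\mathbf{E}\left(\sigma^l(V_0\backslash\{0\})\,\vline\,X^l_0=x\right)\leq\sup_{x,y}R^l(x,y)\leq C$ (commute time identity plus Lemma \ref{diambound}), so that $\sigma^l(V_0\backslash\{0\})/2t_l$ is stochastically dominated by a geometric random variable; this gives precisely the ``clean comparison'' you ask for. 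One caution: your fallback scenario rests on a miscalculation. If one only had $\mathbf{E}\left((T_1^l)^2\right)\leq C\,\mathbf{E}(T_1^l)$, then, using $\mathbf{E}(T_1^l)=\tau^ll^{-2}d^{-1}$, the quantity $l^2T\,\mathbf{E}(T_1^l)/(\tau^l)^2$ equals $T/(d\tau^l)$, not $T/(\tau^ll^2)$, and since $\tau^l\rightarrow0$ this diverges; so that alternative would not close the argument in any dimension, and the stronger bound in terms of $\left(\mathbf{E}(T_1^l)\right)^2$ is genuinely required.
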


\begin{proof} Since $T_n^l-n\mathbf{E}(T_1^l)$ is a martingale (in $n$), we have from Doob's $L^2$ martingale inequality that
\[\mathbf{P}\left(\frac{\sup_{n\leq l^2 T}\left|T_n^l-n\mathbf{E}(T_1^l)\right|}{\tau^l}>\varepsilon\right)\leq \frac{\mathbf{E}\left(\left(T_{\lfloor l^2T\rfloor}^l-\lfloor l^2T\rfloor\mathbf{E}(T_1^l)\right)^2\right)}{(\varepsilon\tau^l)^2}\leq C_{d,\varepsilon,T}l^{-2}\frac{\mathbf{Var}(T_1^l)}{\mathbf{E}(T_1^l)^2},\]
and so it will be sufficient to show that the right-hand side converges to 0 as $l\rightarrow\infty$. For $A\subseteq \mathrm{SG}(l)$, we write
\[\sigma^l(A):=\inf\{t\geq 0:\:X^l_t\in A\}.\]
By the scaling of the resistance form $(\mathcal{E}^l,\mathcal{F}^l)$ that follows from \eqref{edef} and the scaling of the measure $\mu^l$, it is possible to check that, when $X^l$ is started from 0, $T_{1}^l$ is equal in distribution to $\rho_l^{-1}N_{d,l}^{-1}\sigma^l(V_{0}\backslash\{0\})$. Moreover, in the proof of Theorem \ref{t:mr2} below (see \eqref{req} in particular), we will check that $\mathbf{E}(\sigma^l(V_{0}\backslash\{0\}))=1/2d$. Hence it will be enough to show that
\[l^{-2}\mathbf{E}\left(\sigma^l(V_{0}\backslash\{0\})^2\right)\rightarrow 0\]
as $l\rightarrow\infty$. To do this, we will check that the expectation on the left-hand side above can be bounded uniformly in $l$. To this end, we first introduce a parameter
\[t_l:=\sup_{x\in \mathrm{SG}(l)}\mathbf{E}\left(\sigma^l(V_{0}\backslash\{0\})\:\vline\:X^l_0=x\right).\]
By the commute time identity (see \eqref{commute}), we have that
\[t_l\leq \sup_{x\in \mathrm{SG}(l)}\sup_{y\in V_{0}\backslash\{0\}}\mathbf{E}\left(\sigma^l_y\:\vline\:X^l_0=x\right)
\leq \sup_{x,y\in\mathrm{SG}(l)}R^l\left(x,y\right)\leq C,\]
where we have applied Lemma \ref{diambound} to deduce the second inequality with a constant $C$ that is independent of $l$. Consequently, applying the Markov property and the Markov inequality, we have that
\[\mathbf{P}\left(\sigma^l(V_{0}\backslash\{0\})\geq 2nt_l\right)\leq \sup_{x\in \mathrm{SG}(l)}\mathbf{P}\left(\sigma^l(V_{0}\backslash\{0\})\geq 2t_l\:\vline\:X^l_0=x\right)^n\leq 2^{-n},\qquad \forall n\in \mathbb{N}.\]
In particular, under $\mathbf{P}$, $\sigma^l(V_{0}\backslash\{0\})/2t_l$ is stochastically dominated by a geometric, parameter $1/2$, random variable, and so it has a second moment that is uniformly bounded in $l$. Since $t_l$ is also uniformly bounded in $l$, this completes the proof.
\end{proof}

Next, define $(H^l(t))_{t\geq 0}$, the right-continuous inverse of $T^l$, by setting
\[H^l(t):=\inf\left\{n\geq 0:\:T^l_n>t\right\}.\]
It is a straightforward consequence of the previous proposition (and the definition of $\tau^l$) that we have a similar concentration result for $H^l$.

\begin{cor}\label{c:hconc} For any $T>0$, as $l\rightarrow \infty$,
\[\sup_{t\leq T}\left|d^{-1}l^{-2}H^l(\tau^lt)- t\right|\rightarrow 0\]
in probability.
\end{cor}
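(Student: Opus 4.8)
The plan is to transfer the concentration of the hitting times $T^l_n$ established in Proposition \ref{tprop} to their right-continuous inverse $H^l$ via the elementary duality $H^l(s)\leq n \iff T^l_n>s$. First I would record that, by the definition of $\tau^l$ in \eqref{tauldef}, we have $\mathbf{E}(T_1^l)=\tau^l(dl^2)^{-1}$, so that for any fixed $\delta>0$ and $T'>0$, Proposition \ref{tprop} (applied with $T$ replaced by $T'$) says that the event
\[
G_l:=\left\{\sup_{n\leq l^2 T'}\left|T^l_n-n\tau^l(dl^2)^{-1}\right|\leq \delta\tau^l\right\}
\]
has probability tending to one as $l\to\infty$.

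Next I would fix $\varepsilon\in(0,1)$, choose $\delta<\varepsilon$ and $T'=d(T+1)$, and establish the two-sided bound $t-\varepsilon< d^{-1}l^{-2}H^l(\tau^lt)\leq t+\varepsilon+d^{-1}l^{-2}$ on $G_l$, uniformly in $t\leq T$. For the upper bound, set $n=\lceil dl^2(t+\varepsilon)\rceil$; then $n\tau^l(dl^2)^{-1}\geq (t+\varepsilon)\tau^l$, so on $G_l$ we have $T^l_n\geq (t+\varepsilon-\delta)\tau^l>t\tau^l$, whence $H^l(\tau^lt)\leq n\leq dl^2(t+\varepsilon)+1$. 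For the lower bound, set $m=\lfloor dl^2(t-\varepsilon)\rfloor$ (taken to be $0$ when $t<\varepsilon$); then $m\tau^l(dl^2)^{-1}\leq (t-\varepsilon)\tau^l$, so on $G_l$ we have $T^l_m\leq (t-\varepsilon+\delta)\tau^l\leq t\tau^l$, and hence $H^l(\tau^lt)>m>dl^2(t-\varepsilon)$. Crucially, for every $t\leq T$ the indices $n$ and $m$ used here lie in $[0,l^2T']$ once $l$ is large (since $dl^2(t+\varepsilon)+1\leq dl^2(T+1)=l^2T'$ for $\varepsilon<1$ and $l$ large), so the estimate defining $G_l$ is simultaneously available for all such $t$.

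Combining the two bounds gives $\sup_{t\leq T}|d^{-1}l^{-2}H^l(\tau^lt)-t|\leq\varepsilon+(dl^2)^{-1}$ on $G_l$, and since $\mathbf{P}(G_l)\to1$, letting $l\to\infty$ (so that $(dl^2)^{-1}\to0$) and then $\varepsilon\downarrow0$ yields the claimed convergence in probability. This is essentially a routine inversion of a law of large numbers, so I do not expect a genuine obstacle; the only points requiring care are the floor/ceiling corrections, which contribute the harmless $O(l^{-2})$ term, and the verification that the full range of indices needed for $t\leq T$ stays within the window $n\leq l^2T'$ on which Proposition \ref{tprop} furnishes uniform concentration. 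One could alternatively exploit the monotonicity of $t\mapsto H^l(\tau^lt)$ to reduce to a finite grid of values of $t$, but the direct argument above already delivers the uniform (in $t$) bound at no extra cost.
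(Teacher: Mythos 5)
Your argument is correct and is precisely the routine inversion that the paper has in mind: it states Corollary \ref{c:hconc} without proof as ``a straightforward consequence'' of Proposition \ref{tprop} and the definition of $\tau^l$, and your write-up supplies exactly those standard details (duality between $T^l$ and $H^l$, concentration on the event $G_l$, and the check that the required indices stay within the window $n\leq l^2T'$). The only blemish is the chain ``$H^l(\tau^lt)>m>dl^2(t-\varepsilon)$'', whose second inequality fails since $m=\lfloor dl^2(t-\varepsilon)\rfloor\leq dl^2(t-\varepsilon)$; but integrality gives $H^l(\tau^lt)\geq m+1>dl^2(t-\varepsilon)$, so the conclusion stands.
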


Putting together Proposition \ref{p:yscaling} and Corollary \ref{c:hconc}, we arrive at the proof of Theorem \ref{t:mr1}.

\begin{proof}[Proof of Theorem \ref{t:mr1}]
Fix $\delta,T>0$, and suppose that
\[\sup_{t\leq T}\left|d^{-1}l^{-2}H^l(\tau^lt)- t\right|\leq \delta.\]
It then holds that
\begin{eqnarray*}
\sup_{t\leq T}\left|X^l_{\tau^lt}-Y^l_{\lfloor dl^2 t\rfloor}\right|&\leq &l^{-1}+\sup_{t\leq T}\left|Y^l_{H^l(\tau^lt)}-Y^l_{\lfloor dl^2 t\rfloor}\right|\\
&\leq &l^{-1}+\sup_{\substack{s,t\leq T+\delta:\\|s-t|\leq \delta}}\left|Y^l_{\lfloor dl^2 s\rfloor}-Y^l_{\lfloor dl^2 t\rfloor}\right|.
\end{eqnarray*}
Consequently, for any $\varepsilon,\delta>0$,
\begin{eqnarray*}
\lefteqn{\mathbf{P}\left(\sup_{t\leq T}\left|X^l_{\tau^lt}-Y^l_{\lfloor d l^2 t\rfloor}\right|> \varepsilon\right)}\\
&\leq &\mathbf{P}\left(\sup_{t\leq T}\left|d^{-1}l^{-2}H^l(\tau^lt)- t\right|> \delta\right)+\mathbf{P}\left(l^{-1}+\sup_{\substack{s,t\leq T+\delta:\\|s-t|\leq \delta}}\left|Y^l_{\lfloor d l^2 s\rfloor}-Y^l_{\lfloor dl^2 t\rfloor}\right|>\varepsilon\right).
\end{eqnarray*}
By Corollary \ref{c:hconc}, the first of these probabilities converges to 0 as $l\rightarrow \infty$. Moreover, Proposition \ref{p:yscaling} gives that the second probability converges to
\[\mathbf{P}\left(\sup_{\substack{s,t\leq T+\delta:\\|s-t|\leq \delta}}\left|X^\Delta_s-X^\Delta_t\right|>\varepsilon\right),\]
which, by the continuity of the process $X^l$, converges in turn to 0 as $\delta\rightarrow 0$. In particular, we have checked that, as $l\rightarrow\infty$,
\[\mathbf{P}\left(\sup_{t\leq T}\left|X^l_{\tau^lt}-Y^l_{\lfloor d l^2 t\rfloor}\right|> \varepsilon\right)\rightarrow 0,\]
which, in conjunction with Proposition \ref{p:yscaling}, is enough to complete the proof.
\end{proof}

To complete this section, we check the claims of Theorem \ref{t:mr2}, which reexpresses $\tau^l$, as originally defined in \eqref{tauldef}, in terms of the walk dimension, and describes the $l\rightarrow\infty$ asymptotics.

\begin{proof}[Proof of Theorem \ref{t:mr2}]
We start by recalling from the proof of Proposition \ref{tprop} that $T_1^l$ is equal in distribution to $\rho_l^{-1}N_{d,l}^{-1}\sigma^l(V_{0}\backslash\{0\})$, where $\sigma^l(V_{0}\backslash\{0\})$ is defined in the same proof. Hence, since $d_w(l)=\log(\rho_lN_{d,l})/\log l$ (by \cite[Proposition 2.1]{HK}, see also \cite[Theorem 8.18]{Barlow}),
\[\tau^l=dl^{2-d_w(l)}\mathbf{E}\left(\sigma^l(V_0\backslash\{0\})\right).\]
In particular, to complete the proof of the equality in the statement of the theorem, it suffices to show that
\begin{equation}\label{req}
\mathbf{E}\left(\sigma^l(V_0\backslash\{0\})\right)=\frac{1}{2d}.
\end{equation}
In checking this, we will apply the commute time identity of \eqref{commute}, which implies that, for any $x\in V_0\backslash\{0\}$,
\[\mathbf{E}\left(\sigma^l_x\right)=\frac{1}{2}R^l(0,x)=\frac{1}{2}.\]
Now, we may alternatively write
\begin{eqnarray*}
\lefteqn{\mathbf{E}\left(\sigma^l_x\right)}\\
&=&\mathbf{E}\left(\sigma^l_x\mathbf{1}_{\{\sigma^l_x=\sigma^l(V_0\backslash\{0\})\}}\right)+\mathbf{E}\left(\sigma^l_x\mathbf{1}_{\{\sigma^l_x>\sigma^l(V_0\backslash\{0\})\}}\right)\\
&=&\mathbf{E}\left(\sigma^l(V_0\backslash\{0\})\mathbf{1}_{\{\sigma^l_x=\sigma^l(V_0\backslash\{0\})\}}\right)+\\
&&\hspace{60pt}\mathbf{E}\left(\sigma^l(V_0\backslash\{0\})\mathbf{1}_{\{\sigma^l_x>\sigma^l(V_0\backslash\{0\})\}}\right)+\mathbf{E}\left(\sigma^l_x\right)\mathbf{P}\left(\sigma^l_x>\sigma^l(V_0\backslash\{0\})\right)\\
&=&\mathbf{E}\left(\sigma^l(V_0\backslash\{0\})\right)+\mathbf{E}\left(\sigma^l_x\right)\mathbf{P}\left(\sigma^l_x>\sigma^l(V_0\backslash\{0\})\right),
\end{eqnarray*}
where we have applied the strong Markov property at $\sigma^l(V_0\backslash\{0\})$ to deduce the second equality. (We also use symmetry to replace the expected time taken by the process $X^l$ to traverse from a vertex of $V_0\backslash\{0,x\}$ to $x$ with the expected time taken to traverse from $0$ to $x$.) Again appealing to symmetry, we have that
\[\mathbf{P}\left(\sigma^l_x>\sigma^l(V_0\backslash\{0\})\right)=\frac{|V_0|-2}{|V_0|-1}=\frac{d-1}{d}.\]
It follows that
\[\mathbf{E}\left(\sigma^l(V_0\backslash\{0\})\right)=\mathbf{E}\left(\sigma^l_x\right)\mathbf{P}\left(\sigma^l_x=\sigma^l(V_0\backslash\{0\})\right)=\frac{1}{2d},\] which establishes \eqref{req}, as required.

Finally, the $\asymp$ part of the theorem statement is a ready consequence of \eqref{d_w_l}.
\end{proof}

\section{Proof of Theorem \ref{LCLT}}\label{sec4}

In this section, we establish the convergence of transition densities stated as Theorem \ref{LCLT}. We first note that, by standard results on processes associated with resistance forms, for each $l\geq 2$, the process $X^l$ admits a jointly continuous transition density with respect to $\mu^l$ (see \cite[Theorem 10.4]{Kigq}, for example). As already noted in the introduction, we will denote this function by $(p^l_t(x,y))_{x,y\in \mathrm{SG}(l),\:t>0}$. Given the functional scaling limit of Theorem \ref{t:mr1}, to establish the convergence of these objects, it will suffice to check they satisfy the following equicontinuity result. Indeed, the proof of Theorem \ref{LCLT} can then be completed by a simple adaptation of the argument of \cite[Theorem 1]{CrHPA}. (In \cite{CrHPA}, the focus was on the transition densities of random walks on graphs, but the basic argument applies to more general processes.)

\begin{theorem}\label{PT6} For every compact interval $I\subseteq (0,\infty)$, it holds that
\[\lim_{\delta\rightarrow0}\limsup_{l\rightarrow \infty}\sup_{\substack{x,x',y,y' \in \mathrm{SG}(l):\\|x-x'|,|y-y'|\leq \delta}}\sup_{t\in I}\left|p^l_{\tau^l t}(x,y)-p^l_{\tau^l t}(x',y')\right|=0.\]
\end{theorem}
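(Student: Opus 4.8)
The plan is to reduce the statement to two uniform-in-$l$ ingredients: a bound on the \emph{size} of the rescaled transition densities, and a uniform spatial \emph{modulus of continuity} for them. For the first, observe that for $t$ in the compact interval $I\subseteq(0,\infty)$ we have $\tau^l t\gg l^{-d_w(l)}$, since $\tau^l=\tfrac12 l^{2-d_w(l)}$ gives $\tau^l t/l^{-d_w(l)}\asymp l^2\to\infty$. Hence $\tau^l t$ lies well inside the `Euclidean' regime of the on-diagonal heat kernel bound of Lemma \ref{hkbound}, and I would use this to deduce a uniform bound $\sup_{l\geq2}\sup_{x\in\mathrm{SG}(l)}\sup_{t\in I}p^l_{\tau^l t}(x,x)\leq C$. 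By the Chapman--Kolmogorov identity and Cauchy--Schwarz, $p^l_{\tau^l t}(x,y)\le (p^l_{\tau^l t}(x,x)p^l_{\tau^l t}(y,y))^{1/2}$, so the same $C$ controls the full rescaled kernel.

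For the modulus of continuity, the tempting first move is the resistance-form estimate $|f(x)-f(y)|^2\le R^l(x,y)\mathcal{E}^l(f,f)$ applied to $f=p^l_{\tau^l t}(\cdot,z)$, together with the spectral bound $\mathcal{E}^l(p^l_s(\cdot,z),p^l_s(\cdot,z))=-\tfrac12\partial_s p^l_{2s}(z,z)\le (C/s)\sup_w p^l_s(w,w)$. Combined with the size bound this yields $|p^l_{\tau^l t}(x,z)-p^l_{\tau^l t}(y,z)|\le C(R^l(x,y)/\tau^l)^{1/2}$, and by symmetry and the triangle inequality one reduces the two-variable difference to such one-variable terms. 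However, I expect this route to fail: for $|x-y|\in[l^{-1},1]$ the resistance $R^l(x,y)$ is comparable to the full resistance diameter (this reflects the transience of the tetrahedral lattice for $d\ge3$, and its logarithmic growth for $d=2$), so $\sup_{|x-y|\le\delta}R^l(x,y)$ does not vanish as $\delta\to0$ uniformly in $l$; worse, $R^l(x,y)/\tau^l$ diverges because $\tau^l\to0$. This is precisely the degeneration of $R^l$ noted after Theorem \ref{t:mr1}, and it is the main obstacle.

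To obtain a genuinely Euclidean modulus I would instead build on a uniform-in-$l$ Poincar\'{e} inequality at scales above the level-$1$ mesh $l^{-1}$. The geometric input is that, beyond this mesh, the level-$1$ cells of $\mathrm{SG}(l)$ are arranged like a section of the tetrahedral lattice, so the process mixes in a Euclidean fashion, and the bottlenecks at which cells meet cost only a bounded factor, controlled by $\inf_{l}\rho_l>1$ (Lemma \ref{rhobound}) and the uniform resistance-diameter bound (Lemma \ref{diambound}). Together with the volume-doubling property of $\mu^l$ with respect to Euclidean balls---whose exponent $d_H(\mathrm{SG}(l))=\log N_{d,l}/\log l\to d$ and whose doubling constant are uniform in $l$---this Poincar\'{e} inequality can be fed into the De Giorgi--Nash--Moser / parabolic Harnack machinery (as in \cite{Delmotte,GT1}) to produce, with constants independent of $l$, a H\"{o}lder estimate for $p^l_{\tau^l t}$ at the Euclidean scale. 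After the time rescaling by $\tau^l$, the natural space-time scale at times $t\in I$ is $O(1)$ and the relevant ball volumes are comparable to $1$, so the estimate collapses to $|p^l_{\tau^l t}(x,y)-p^l_{\tau^l t}(x',y')|\le C(|x-x'|+|y-y'|)^{\gamma}$ for some $\gamma>0$, uniformly in $l\ge2$ and $t\in I$. Taking the supremum over $|x-x'|,|y-y'|\le\delta$ and letting $\delta\to0$ then gives the claim.

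The step I expect to be genuinely hard is the uniform Poincar\'{e} inequality itself. One must quantify the Euclidean-type mixing that takes place between the fractal bottlenecks at every scale $l^{-m}$, $m\ge0$, and show that the resulting constant does not degenerate as $l\to\infty$; the matching of the fractal scaling below the mesh with the Euclidean scaling above it, uniformly in $l$, is the crux, and is exactly the point flagged in the abstract. Once this is in place, the remaining steps---the on-diagonal bound, the passage from Poincar\'{e} plus volume doubling to heat-kernel H\"{o}lder continuity, and the reading-off of equicontinuity---are standard, and Theorem \ref{LCLT} then follows from Theorem \ref{t:mr1} by the argument of \cite{CrHPA}.
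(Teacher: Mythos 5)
Your two-ingredient architecture (a uniform on-diagonal bound plus a uniform spatial modulus effective down to scale $l^{-1}$) is exactly the paper's; your first ingredient is handled identically (Lemma \ref{hkbound} at times $\tau^l t\gg l^{-d_w(l)}$, Cauchy--Schwarz for the off-diagonal), and you correctly identify both the failure of the naive resistance-form estimate and the uniform Poincar\'{e} inequality as the central input. The gap is the step ``feed the Poincar\'{e} inequality into the De Giorgi--Nash--Moser / parabolic Harnack machinery''. That machinery is not available off the shelf here: the Poincar\'{e} inequality on Euclidean balls \emph{fails} below scale $l^{-1}$ because of the bottlenecks between $m$-cells (the paper compares the situation to two copies of $\mathbb{Z}^d$ glued at a point, a standard counterexample), which is why Lemma \ref{thm:lem4.5} is restricted to a single $m$-cell; and the matching exit-time \emph{lower} bound, which any parabolic Harnack inequality with the scale function $\Psi_l$ would require, carries an $l$-dependent defect at sub-$l^{-1}$ scales (see the remark after Lemma \ref{L.R2}). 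One could try to prove a parabolic Harnack inequality restricted to scales above $l^{-1}$ with uniform constants, but that is itself a substantial task --- in particular, transferring caloric regularity from the lattice walk $Y$ to $X^l$ is delicate because the time change between the two only concentrates in probability --- and you have not sketched it.

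The paper avoids this by using the Poincar\'{e} inequality only for the Nash-type iteration giving the on-diagonal bound (Lemma \ref{hkbound}), and by obtaining the spatial modulus through an elliptic route: the elliptic Harnack inequality for $X^l$ at scales above $8l^{-1}$ is inherited from the lattice walk (Lemma \ref{L.R2}(b)), yields oscillation decay for harmonic functions (Theorem \ref{T.R1}), hence H\"{o}lder continuity of the $\lambda$-resolvents (Proposition \ref{PR3}), and finally H\"{o}lder continuity of the semigroup via the spectral theorem combined with the on-diagonal bound (Proposition \ref{PR4}). To complete your argument you would need either to carry out this elliptic/resolvent chain or to supply an actual proof of a restricted-scale parabolic Harnack inequality; as written, the key analytic step is asserted rather than proved.
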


In fact, we will check something slightly stronger than the above statement, namely a uniform H\"{o}lder continuity property for the transition densities. To state this, we introduce the following scale function:
\[\Psi_l(r):= r^{d_w(l)}\mathbf{1}_{\{r\le l^{-1}\}}+2\tau^lr^2\mathbf{1}_{\{r> l^{-1}\}},\]
where, as per the characterization of Theorem \ref{t:mr2}, $\tau^l=\frac12l^{2-d_w(l)}$. We note that this function is a continuous, strictly increasing function, and so has a well-defined inverse $\Psi_l^{-1}$, which is given by
\[\Psi^{-1}_l(t):= t^{1/d_w(l)}\mathbf{1}_{\{t\le l^{-d_w(l)}\}}+(2\tau^l)^{-1/2}t^{1/2}\mathbf{1}_{\{t> l^{-d_w(l)}\}}.\]
Roughly speaking, $\Psi_l$ describes the time-scaling of $X^l$ on different scales: for $r\leq l^{-1}$, the appropriate time-scaling is the fractal one determined by $ r^{d_w(l)}$, whereas, for $r>l^{-1}$, the scaling is the usual Euclidean one $r^2$, scaled by the $l$-dependent constant $2\tau^l$. (The 2 here is simply to ensure that $\Psi_l$ and $\Psi^{-1}_l$ are continuous.) It is clear that Theorem \ref{PT6} is a consequence of the following statement. Note that we use the notation $x\vee y:= \max\{x,y\}$.

\begin{theorem}\label{PT7}
For any $t_0>0$, there exist constants $C$ and $\gamma>0$ such that
\[\left|p^{l}_{\tau^lt}(x,y)-p^{l}_{\tau^lt}(x',y')\right|\leq Ct^{-(1+3d/4)}\left(|x-x'|\vee|y-y'|\vee l^{-1}\right)^{\gamma},\]
for all $x,x',y,y' \in \mathrm{SG}(l)$, $t\in [4l^{-2},t_0]$.
\end{theorem}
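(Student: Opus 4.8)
The plan is to prove the stated H\"{o}lder estimate for $p^l_{\tau^l t}(x,y)$ by leveraging the established theory relating heat kernel continuity to sub-Gaussian/sub-diffusive estimates governed by the scale function $\Psi_l$. The key structural point encoded in $\Psi_l$ is that $X^l$ behaves like a fractal diffusion with walk dimension $d_w(l)$ below scale $l^{-1}$, and like a (time-scaled) Euclidean Brownian motion above scale $l^{-1}$; this crossover is exactly what the two-regime definition of $\Psi_l$ captures. First I would establish, uniformly in $l$, the two ingredients needed to run a standard oscillation/Harnack-type argument for the heat kernel: (i) an on-diagonal upper bound of the form $\sup_x p^l_t(x,x)\leq C V_l(\Psi_l^{-1}(t))^{-1}$, where $V_l$ is the volume growth of $\mu^l$ (this is Lemma \ref{hkbound} as advertised in Remark \ref{rem2}(a), with the two $t$-regimes matching the two regimes of $\Psi_l$); and (ii) near-diagonal lower bounds together with exit time control on each scale. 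The effective resistance diameter bound of Lemma \ref{diambound}, the commute-time identity \eqref{commute}, and the uniform control on $\rho_l$ from Lemmas \ref{hklem} and \ref{rhobound} are the inputs that make these estimates uniform in $l$.

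Next I would convert these heat kernel bounds into the desired spatial H\"{o}lder continuity. The standard route is to show that the parabolic functions $u(t,x)=p^l_{\tau^l t}(x,y)$ satisfy a parabolic Harnack inequality on balls, or at least an oscillation estimate of the form
\[
\operatorname{osc}_{B(x_0,r)} p^l_{\tau^l t}(\cdot,y)\leq C\left(\frac{\Psi_l^{-1}(c)^{-1}}{\cdots}\right)\left(\frac{r}{R}\right)^{\gamma},
\]
obtained by iterating a single-scale oscillation bound across dyadic (or, more naturally here, $l$-adic) scales. Because the process is Euclidean-like above scale $l^{-1}$, once $r\gtrsim l^{-1}$ the iteration is just the classical Euclidean De Giorgi--Nash--Moser oscillation decay, while below scale $l^{-1}$ the fractal estimates take over; the role of the $l^{-1}$ term in $|x-x'|\vee|y-y'|\vee l^{-1}$ is precisely to truncate the iteration at the crossover scale, so that one never needs sub-$l^{-1}$ spatial resolution to obtain the bound. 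The factor $t^{-(1+3d/4)}$ should emerge from combining the on-diagonal prefactor (of order roughly $t^{-d/2}$ in the Euclidean regime, via $V_l(\Psi_l^{-1}(\tau^l t))^{-1}$) with the time-regularity needed to control $\partial_t$-type terms when passing from a single-time to a parabolic estimate, and from the lower cutoff $t\geq 4l^{-2}$ ensuring we are safely in the Euclidean regime of $\Psi_l$.

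The main obstacle will be obtaining all of the above estimates \emph{uniformly in $l$}, since the geometry of $\mathrm{SG}(l)$ changes with $l$ and naively the fractal heat kernel constants could blow up as $l\to\infty$. This is where the crossover philosophy must be made quantitative: I would control the contribution of the fractal scales $r\leq l^{-1}$ using the self-similar scaling \eqref{edef} of the resistance form together with the uniform resistance-diameter bound of Lemma \ref{diambound}, and then patch these local (single-cell) estimates to the global Euclidean estimate using a Poincar\'{e} inequality analogous to \eqref{pi} but for $\mathrm{SG}(l)$ itself. As the abstract emphasises, deriving this Poincar\'{e} inequality — exploiting the Euclidean-type mixing between the bottlenecks present at each scale — is the genuinely hard step, since it requires showing that the many narrow junctions between cells do not degrade the mixing as $l\to\infty$; I expect the bulk of the technical work to lie in establishing a scale-uniform Poincar\'{e} (or equivalently a uniform elliptic/parabolic Harnack) inequality for $X^l$ that survives the $l\to\infty$ limit, with the subsequent H\"{o}lder iteration being comparatively routine once that is in hand.
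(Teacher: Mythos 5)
Your identification of the key ingredients --- the on-diagonal upper bound of Lemma \ref{hkbound}, a scale-adapted Poincar\'{e} inequality, and the role of the $l^{-1}$ truncation at the crossover scale --- matches the paper, and your instinct that the Poincar\'{e} inequality is where the real work lies is correct. However, the route you propose from these ingredients to the H\"{o}lder estimate is different from the paper's and contains a genuine gap. You want to run a parabolic Harnack / De Giorgi--Nash--Moser oscillation iteration on the heat kernel itself, and you list as a required input ``near-diagonal lower bounds together with exit time control on each scale.'' A parabolic Harnack inequality with constants uniform in $l$ would indeed require exit time and near-diagonal heat kernel \emph{lower} bounds matching $\Psi_l$ uniformly in $l$, and this is precisely what is not available: the remark following Lemma \ref{L.R2} records that the best exit time lower bound the authors can prove is ${\mathbf E}^l_{x_0}\bar{\sigma}^l(x_0,r)\geq cl^{-(d_w(l)-2)}\Psi_l(r)$, with an $l$-dependent loss that degenerates as $l\to\infty$ for $d\geq 3$; likewise Remark \ref{rem2}(a) stresses that only the upper bound in Lemma \ref{hkbound} is established. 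With these inputs your single-scale oscillation constant, and hence the exponent $\gamma$, would degenerate with $l$, destroying the uniformity that is the whole point of the theorem.

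The paper circumvents this by an ``elliptic'' route that uses only upper bounds: the elliptic Harnack inequality of Lemma \ref{L.R2}(b), inherited from the lattice walk $Y^l$ rather than from two-sided bounds on $X^l$, gives uniform H\"{o}lder continuity of harmonic functions (Theorem \ref{T.R1}); combined with the exit time \emph{upper} bound this yields H\"{o}lder continuity of the resolvents $U^l_\lambda f$ (Proposition \ref{PR3}); spectral theory transfers this to the semigroup by writing $P^l_{\tau^l t}f=U^l_{(\tau^l)^{-1}}h$ with $\|h\|_\infty$ controlled via the on-diagonal upper bound (Proposition \ref{PR4}); and finally Chapman--Kolmogorov with $f_l=p^l_{\tau^l t/2}(\cdot,y)$ gives the theorem, the exponent $1+3d/4$ arising as $1+d/2$ from the resolvent-to-semigroup step plus $d/4$ from $\|f_l\|_2=p^l_{\tau^l t}(y,y)^{1/2}$. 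To salvage your approach you would need first to prove the uniform lower bounds, which the authors explicitly leave open.
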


Towards proving Theorem \ref{PT7}, we start by deriving an exit time estimate that establishes that $\Psi_l$ indeed gives appropriate control on the time-scaling, and also a version of the elliptic Harnack inequality in the present setting. For the statement, we let $B(x,r)$  be the standard $d$-dimensional Euclidean ball, centred at $x$ and with radius $r$, and define
\[\bar{\sigma}^l(x,r):=\sigma^l\left(B(x,r)^c\right)\]
to be the exit time of $B(x,r)$ by $X^l$. Moreover, we say that a function $h_l:\mathrm{SG}(l)\rightarrow \mathbb{R}$ is harmonic for $({\cal E}_{l}, {\cal F}_{l})$ in a ball $B(x_0,r)$ if $h_l(X^l_{t\land \bar{\sigma}^l(x_0,r)})$
is a martingale (with respect to the filtration associated with $X^l$) for every starting point $x\in \mathrm{SG}(l)$. To ease notation in what follows, we write $\mathbf{P}^l_x:=\mathbf{P}(\cdot\:\vline\:X^l_0=x)$ and $\mathbf{E}^l_x:=\mathbf{E}(\cdot\:\vline\:X^l_0=x)$.

\begin{lemma}\label{L.R2}
(a) There exists a constant $c_1$,
independent of $l$, such that
\[{\mathbf E}^l_x \bar{\sigma}^l(x_0,r)\leq c_1 \Psi_l(r), \qquad \forall x,x_0\in \mathrm{SG}(l),\:r\in(0,\tfrac14).\]
(b) There exists a constant $\theta>0$, independent of $l$, such that the following holds. If $r\in(8l^{-1},\frac{1}{4})$ and $h_l: \mathrm{SG}(l)\to {\mathbb R}_+$ is a harmonic function for $({\cal E}_{l}, {\cal F}_{l})$ on $B(x_0, 2r)$ for some $x_0\in \mathrm{SG}(l)$, then
\[\sup_{y\in B(x_0,r)\cap \mathrm{SG}(l)}h_l(y)\le \theta \inf_{y\in B(x_0,r)\cap \mathrm{SG}(l)}h_l(y).\]
\end{lemma}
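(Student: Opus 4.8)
The plan is to reduce both parts to the ``Euclidean'' (macroscopic) scale, where $X^l$ can be compared with the simple random walk on a section of the tetrahedral lattice, and then to descend to finer scales using the exact self-similarity \eqref{edef}. The two inputs I would lean on are the uniform-in-$l$ lattice estimates already obtained in the proof of Proposition \ref{p:yscaling}: the exit-time bound \eqref{exitupper} (for part (a)) and the elliptic Harnack inequality \eqref{graphehi} (for part (b)). The bridge to the diffusion is the vertex-hitting-time decomposition of Subsection \ref{23sec}: by self-similarity the trace of $X^l$ on the level-$k$ vertex set $V_k^l$ is simple random walk, with mean time per step $\tfrac{1}{2d}l^{-kd_w(l)}$ (from \eqref{req}, \eqref{tauldef} and $\rho_lN_{d,l}=l^{d_w(l)}$), and the associated hitting times form a martingale exactly as in the proof of Proposition \ref{tprop}.

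For part (a), given $r\in(0,\tfrac14)$ I would fix the integer $m\ge0$ with $l^{-(m+1)}<r\le l^{-m}$ and observe $X^l$ at its successive visits to $V_{m+1}^l$, the lattice of spacing $l^{-(m+1)}$. Since $X^l$ moves only an $O(l^{-(m+1)})$ Euclidean distance between consecutive such visits, the exit time $\bar{\sigma}^l(x_0,r)$ is bounded above by the time for this walk to leave a lattice ball of radius comparable to $rl^{m+1}\in(1,l]$ (in lattice units). By optional stopping applied to the hitting-time martingale, justified as in Proposition \ref{tprop}, the expected exit time is the expected number of lattice steps times the per-step mean $\tfrac{1}{2d}l^{-(m+1)d_w(l)}$; the number of steps is at most $c_1(rl^{m+1})^2$ by \eqref{exitupper}, so
\[\mathbf{E}^l_x\bar{\sigma}^l(x_0,r)\le C\,(rl^{m+1})^2\,l^{-(m+1)d_w(l)}=C\,r^2\,l^{(m+1)(2-d_w(l))}.\]
Because $d_w(l)>2$ and $r>l^{-(m+1)}$, this is at most $Cr^{d_w(l)}$ when $m\ge1$, while for $m=0$ it equals $Cr^2l^{2-d_w(l)}=C\Psi_l(r)$; in both cases the right-hand side is $C\Psi_l(r)$. (For the few radii with $rl^{m+1}$ within a bounded factor of $1$ I would bound $\bar\sigma^l$ crudely by the crossing time of $O(1)$ level-$(m+1)$ cells, of order $l^{-(m+1)d_w(l)}\asymp r^{d_w(l)}$.) The point needing care is that \eqref{exitupper} be available for the level-$(m+1)$ walk uniformly in $l$ and $m$: after rescaling by $l^{m+1}$, the graph seen within the relevant ball is a bounded collection of rescaled copies of $V_1^l$ glued at vertices, for which the volume-doubling and Poincar\'e inputs behind \eqref{exitupper} hold with the same constants as in Proposition \ref{p:yscaling}.

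For part (b), where $r\in(8l^{-1},\tfrac14)$ is macroscopic, I would argue at the level of $V_1^l$. The trace on $V_1^l$ of a nonnegative $({\cal E}_l,{\cal F}_l)$-harmonic function $h_l$ is harmonic for $Y^l$ on the corresponding lattice ball, so the scaled form of \eqref{graphehi} gives $\sup_{V_1^l\cap B(x_0,r')}h_l\le\theta'\inf_{V_1^l\cap B(x_0,r')}h_l$ with $\theta'$ independent of $l$. To pass from the vertices to all of $B(x_0,r)\cap\mathrm{SG}(l)$, note that on each level-$1$ cell $C\subseteq B(x_0,2r)$ the function $h_l$ is harmonic, hence attains its maximum and minimum over $C$ at the boundary vertices of $C$, which lie in $V_1^l$; thus $\sup$ and $\inf$ of $h_l$ over $B(x_0,r)\cap\mathrm{SG}(l)$ are controlled by those over $V_1^l\cap B(x_0,r+l^{-1})$, and chaining the lattice inequality yields the claim. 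The role of $r>8l^{-1}$ is to provide several lattice spacings of room, so that the $O(l^{-1})$ discretization adjustments and the one-cell enlargement leave all radii comfortably within the range where \eqref{graphehi} (or a Harnack-chaining consequence of it) applies with constants uniform in $l$.

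The main obstacle throughout is uniformity in $l$ at the fractal-to-Euclidean crossover. In particular, it is essential in part (a) to use the sharp diffusive bound \eqref{exitupper} rather than the soft resistance-form estimate $\mathbf{E}^l_x\bar\sigma^l(x_0,r)\le(\text{resistance diameter})\times\mu^l(B(x_0,r))$: since the tetrahedral-lattice resistance grows logarithmically when $d=2$, the latter would lose a factor of $\log l$ precisely at radii $r$ near $l^{-(m+1)}$, which \eqref{d_w_l} shows is exactly the size of the $d=2$ crossover correction to $d_w(l)$. Verifying that \eqref{exitupper} and \eqref{graphehi} transfer, with $l$-independent constants, to the finite rescaled lattices produced by the self-similar reduction is the principal technical point; the remaining steps reduce to facts already in hand, namely Lemma \ref{diambound}, \eqref{req}, and the lattice analysis of Proposition \ref{p:yscaling}.
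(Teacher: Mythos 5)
Your proposal is correct and follows the paper's proof essentially step for step: in part (a) you reduce the exit time to that of the embedded walk on $V_{m+1}^l$, bound the number of lattice steps by \eqref{exitupper} and multiply by the per-step mean $\rho_l^{-(m+1)}N_{d,l}^{-(m+1)}$ coming from the self-similarity \eqref{edef}, exactly as in the paper (which in addition explicitly bounds the initial segment before the first visit to $V_{m+1}^l$ by the rescaled resistance diameter of Lemma \ref{diambound}, a term you fold into the same toolkit). Part (b) coincides with the paper's argument: pass to the trace on $V_1^l$, apply the scaled form of \eqref{graphehi}, and extend from the vertices to all of $B(x_0,r)\cap\mathrm{SG}(l)$ via the maximum principle on $1$-cells, with the restriction $r>8l^{-1}$ absorbing the discretization adjustments.
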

\begin{proof}
We start with the proof of (a). Let $x,x_0\in \mathrm{SG}(l)$ and $r\in(0,\tfrac14)$. Started from $x$, if $X^l$ has exited $B(x,2r)$ at a certain time, then it will also have exited $B(x_0,r)$. In particular,
\[{\mathbf E}^l_x \bar{\sigma}^l(x_0,r)\leq {\mathbf E}^l_x \bar{\sigma}^l(x,2r).\]
Moreover, if $r\in (\tfrac14 l^{-1},\tfrac14)$ and the jump chain on $V_1^l$ has exited a ball of radius $l^{-1}(\lceil 2rl\rceil+1)$ centred at the first hitting location of $V_1^l$ by $X^l$, then $X^l$ will have exited $B(x,2r)$. Hence, by the definition of $\tau^l$,
\[{\mathbf E}^l_x \bar{\sigma}^l(x,2r)\leq \mathbf{E}^l_x\sigma^l(V_1^l)+\sup_{y\in V}\mathbf{E}\left(\bar{\sigma}^Y\left(y,\lceil 2rl\rceil+1\right)\:\vline\:Y_0=y\right)\mathbf{E}T_1^l,\]
where $\bar{\sigma}^Y$ was defined in the proof of Proposition \ref{p:yscaling}, and $\mathbf{E}T_1^l$ was defined in Subsection \ref{23sec}. Now, by scaling and the commute time identity, see \eqref{commute}, the first term above satisfies
\[ \mathbf{E}^l_x\sigma^l(V_1^l)\leq \rho_l^{-1}N_{d,l}^{-1}\sup_{y,z\in \mathrm{SG}(l)}R^l(z,y)\leq C\tau^l l^{-2}\leq C\tau^l r^2,\]
where the second inequality follows from the description of $\tau^l$ in Remark \ref{resrem} and the upper bound for the resistance diameter of Lemma \ref{diambound}, and the third inequality follows from the choice of $r$. Furthermore, recalling the definition of the lattice random walk $Y$ from the proof of Proposition \ref{p:yscaling}, it holds that
\[\sup_{y\in V}\mathbf{E}\left(\bar{\sigma}^Y\left(y,\lceil 2rl\rceil+1\right)\:\vline\:Y_0=y\right)\mathbf{E}T_1^l\leq C\tau^ll^{-2}\left(\lceil 2rl\rceil+1\right)^2\leq C\tau^l r^2=\frac12 C\Psi_l(r),\]
where to deduce the first inequality, we apply the definition of $\tau^l$ from \eqref{tauldef} and the bound on the exit times of $Y$ given in \eqref{exitupper}. Hence we have shown the desired bound for this range of $r$. Similarly, if $r\in (\tfrac14l^{-(m+1)},\tfrac14l^{-m}]$, then we have
\begin{align*}
{\mathbf E}^l_x \bar{\sigma}^l(x_0,r)&\leq {\mathbf E}^l_x \bar{\sigma}^l(x,2r)\\
&\leq  \mathbf{E}^l_x\sigma^l(V_{m+1}^l)+\rho_l^{-{m+1}}N_{d,l}^{-{m+1}}\sup_{y\in V}\mathbf{E}\left(\bar{\sigma}^Y\left(y,\lceil 2rl^{m+1}\rceil+1\right)\:\vline\:Y_0=y\right)\\
&\leq \rho_l^{-(m+1)}N_{d,l}^{-(m+1)}\left(\sup_{y,z\in \mathrm{SG}(l)}R^l(z,y)+\left(\lceil 2rl^{m+1}\rceil+1\right)^2\right)\\
&\leq Cl^{-d_w(l)(m+1)}\left(1+r^2l^{2(m+1)}\right)\\
&\leq Cr^{d_w(l)},
\end{align*}
where for the final inequality we use the fact that $d_w(l)\geq 2$. This establishes the exit time upper bound.

Let $r\in(8 l^{-1},\frac{1}{4})$. If $h_l$ is harmonic on $B(x_0,r)$ with respect to $X^l$, then it is harmonic on $\bar{B}(x_0,2r(1-l^{-1}))\cap V_1^l$ with respect to $Y^l$. Hence, similarly to \eqref{graphehi},
\[\sup_{x\in \bar{B}\left(x_0,\tfrac32r(1-l^{-1})\right)\cap V_1^l}h_l(x)\leq C\inf_{x\in \bar{B}\left(x_0,\tfrac32r(1-l^{-1})\right)\cap V_1^l}h_l(x).\]
Note that this statement follows directly from \eqref{graphehi} and the remark following \cite[Definition 1.44]{barbook}, which explains how there is no problem in incorporating the factor $\tfrac{3}{2}$ if one restricts the range of $r$ suitably; this is the reason for taking $r\geq 8l^{-1}$ rather than $r\geq l^{-1}$. Now, since harmonic functions take their maximum and minimum values on the boundaries of domains and the 1-cells of $\mathrm{SG}(l)$ only meet at vertices in $V_1^l$, the above inequality readily implies the desired result.
 \end{proof}

\begin{remark}
It is natural to ask whether the exit time upper bound can be complemented by a lower bound of the form
${\mathbf E}^l_{x_0} \bar{\sigma}^l(x_0,r)\geq c\Psi_l(r)$ for $x_0\in \mathrm{SG}(l)$, $r>0$. However, by arguing similarly to the previous proof, we are only able to show that, for $r\in (\tfrac14l^{-(m+1)},\tfrac14l^{-m}]$,
\[{\mathbf E}^l_{x_0} \bar{\sigma}^l(x_0,r)\geq cr^2l^{-(d_w(l)-2)(m+1)}\geq cr^{d_w(l)}l^{-(d_w(l)-2)}=cl^{-(d_w(l)-2)}\Psi_l(r).\]
In particular, there is additional $l$-dependence in the constant. Whilst we believe there should be some $l$-dependence as one passes along scales of the fractal from $l^{-m}$ to $l^{-(m+1)}$, we do not know if this constant is optimal.
\end{remark}

We next derive a version of the Poincar\'{e} inequality that is tailored to the different levels of the model with the scaling function $\Psi_l(r)$. Again, the proof follows the standard argument for fractals, but we have to combine this with an estimate for lattices in an appropriate way to handle the situation between the scales $l^{-m}$ and $l^{-(m+1)}$ for each $m\geq 0$. The restriction to the $m$-cell $\psi_i(\mathrm{SG}(l))$ seems rather awkward, but we believe that it is necessary, since without this, the comparison with the triangular lattice, which allows us to use the bound in \eqref{pi}, would not be applicable. Indeed, bottlenecks between $m$-cells will mean the result will not be true for more general sets of a similar size. Intuitively, mixing within an $m$-cell of the random walk happens at a much quicker rate than transitions between cells; this suggests functions like the heat kernel will, on the appropriate time-scale, be somewhat step-like -- smoothing on each $m$-cell, before mass passes to a neighbouring one. Indeed, the situation that arises between two neighbouring $m$-cells is, on the appropriate scale, somewhat analogous to the joining of two copies of $\mathbb Z^d$, $d\ge 2$, at a single point. It is known that the latter graphs violate the Poincar\'{e} inequality (see \cite[Example 3.32]{barbook} and \cite[Section 3.3]{Kum14}), and we find a similar issue in our setting. Thus we believe it is unavoidable that we have to consider each $m$-cell $\psi_i(\mathrm{SG}(l))$ separately.

\begin{lemma}
\label{thm:lem4.5}
There exists finite constants $C>0$ and $C_0>1$ such that for $l\geq 2$: if $r\in  (\tfrac14l^{-(m+1)},\tfrac14l^{-m}]$ for some $m\geq 0$, $x\in \psi_i(\Delta)$ for some $i=(i_1,\dots,i_{m})\in\{1,\dots,N_{d,l}\}^{m}$, where we define $\psi_i:=\psi_{i_1}\circ\cdots\circ \psi_{i_{m}}$, and $f\in\mathcal{F}^l$, then
\begin{equation}\label{poincare}
\int_{B(x,r)\cap\psi_i(\mathrm{SG}(l))}\left(f(y)-\bar{f}^{l,i}_{B(x,r)}\right)^2\mu^l(dy)\leq C\Psi_l(r)\rho_l^{m+1}
\sum_{\substack{j\in \{1,\dots,N_{d,l}\}^{m+1}:\\(j_1,\dots,j_m)=(i_1,\dots,i_m),\\\psi_j(\mathrm{SG}(l))\cap B(x,5r)\neq \emptyset}}\mathcal{E}^l\left(f\circ\psi_j,f\circ\psi_j\right),
\end{equation}
where
\[\bar{f}^{l,i}_{B(x,r)}:=\frac{1}{\mu^l(B(x,r)\cap\psi_i(\mathrm{SG}(l)))}\int_{B(x,r)\cap\psi_i(\mathrm{SG}(l))}f(y)\mu^l(dy).\]
\end{lemma}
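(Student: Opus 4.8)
The plan is to prove this local Poincaré inequality by reducing it, at the appropriate scale, to the known graph Poincaré inequality \eqref{pi} on the tetrahedral lattice $V$, while exploiting the self-similar scaling of the resistance form to transfer between the fractal scale $l^{-(m+1)}$ and the lattice scale. The key observation is that the region $B(x,r)\cap\psi_i(\mathrm{SG}(l))$ sits inside a single $m$-cell $\psi_i(\mathrm{SG}(l))$, which is an isometric (up to rescaling by $l^{-m}$ and the resistance scaling $\rho_l^{-m}$) copy of the whole space $\mathrm{SG}(l)$. Since $r\in(\tfrac14 l^{-(m+1)},\tfrac14 l^{-m}]$, after applying $\psi_i^{-1}$ the ball $B(x,r)$ becomes a ball of radius comparable to $l^{-1}$ in $\mathrm{SG}(l)$, which straddles only finitely many $1$-cells $\psi_j(\mathrm{SG}(l))$ with $(j_1,\dots,j_m)=i$. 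This is precisely the scale at which the fractal-to-Euclidean crossover occurs and at which the geometry looks like the tetrahedral lattice $V_1^l$.

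The steps I would carry out are as follows. First I would use the self-similarity \eqref{edef} of the resistance form and the scaling $\mu^l(\psi_j(\mathrm{SG}(l)))=N_{d,l}^{-(m+1)}$ of the measure to rewrite both sides of \eqref{poincare} in terms of the pulled-back function $f\circ\psi_i$ on $\mathrm{SG}(l)$ at scale $1$, so that the factor $\rho_l^{m+1}$ on the right and the factor $\Psi_l(r)\asymp l^{-d_w(l)(m+1)}=(\rho_l N_{d,l})^{-(m+1)}$ on the left combine correctly with the measure scaling. Next, I would reduce the integral over the fractal to a sum over the vertices in $V_1^l$: the mean-value replacement is controlled because a harmonic (or mollified) function is nearly constant on each $1$-cell, and the fluctuation of $f$ within a single $1$-cell is bounded by that cell's Dirichlet energy via the resistance estimate and the commute-time identity \eqref{commute} together with the diameter bound of Lemma \ref{diambound}. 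This standard fractal step replaces $\int(f-\bar f)^2\,d\mu^l$ by a weighted sum $\sum_{x',y'\in V_1^l}(f(x')-f(y'))^2$ over the relevant portion of the lattice, plus an energy error from the within-cell variation. Finally, I would apply the graph Poincaré inequality \eqref{pi} for $V$ (equivalently $V_1^l$ rescaled), which bounds this vertex-sum by $Cr^2$ times the nearest-neighbour edge sum, and then re-express the edge sum back in terms of $\sum_j \mathcal{E}^l(f\circ\psi_j,f\circ\psi_j)$ using \eqref{edef} once more.

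The main obstacle, and the reason the restriction to a single $m$-cell is essential, is the control of the mean-replacement and within-cell fluctuation term: one must ensure that the sum $\sum_{x',y'}(f(x')-f(y'))^2$ over lattice vertices genuinely dominates the continuous $L^2$ oscillation of $f$ over $B(x,r)\cap\psi_i(\mathrm{SG}(l))$, and that the error from approximating $f$ by its $1$-cell averages is absorbed into the right-hand energy sum rather than producing an uncontrolled constant. As emphasised in the discussion preceding the lemma, if one allowed $B(x,r)$ to meet two distinct $m$-cells, the bottleneck connecting them (analogous to joining two copies of $\mathbb{Z}^d$ at a point) would make the comparison with \eqref{pi} fail, since the lattice Poincaré inequality would no longer apply to the disconnected-at-a-point geometry. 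Thus the crux is to verify that, within a single $m$-cell, the ball $B(x,5r)$ appearing on the right genuinely covers all the $1$-subcells needed to bound the oscillation on $B(x,r)$, making the choice of the enlarged radius $5r$ exactly what is required to close the estimate.
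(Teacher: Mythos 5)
Your plan is essentially the paper's own proof: decompose the $L^2$ oscillation over $(m+1)$-cells, bound the within-cell fluctuation by the resistance diameter estimate of Lemma \ref{diambound}, control the cell-to-cell differences at the vertices $V_{m+1}^l\cap\psi_i(\mathrm{SG}(l))$ via the lattice Poincar\'{e} inequality \eqref{pi}, and recombine the scaling factors $\rho_l^{-(m+1)}N_{d,l}^{-(m+1)}(1+(rl^{m+1})^2)$ into $\Psi_l(r)$. The only cosmetic differences are that the paper works directly at scale $l^{-m}$ rather than pulling back by $\psi_i^{-1}$, and that the within-cell step needs only $(f(y)-f(z))^2\leq R^l(y,z)\mathcal{E}^l(f,f)$ from \eqref{rldef} for general $f\in\mathcal{F}^l$ (no harmonicity or commute-time identity is involved).
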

\begin{proof} In the proof, we will write $B=B(x,r)\cap\psi_i(\mathrm{SG}(l))$, where $i\in\{1,\dots,N_{d,l}\}^{m}$ is such that $x\in \psi_i(\Delta)$, with $m$ satisfying $r\in (\tfrac14l^{-(m+1)},\tfrac14l^{-m}]$. Note that the left-hand side of \eqref{poincare} can be written
\begin{align*}
\lefteqn{\int_{B}\left(f(y)-\bar{f}^{l,i}_{B(x,r)}\right)^2\mu^l(dy)}\\
&=\frac{1}{2\mu^l(B)}\int_{B}\int_{B}\left(f(y)-f(z)\right)^2\mu^l(dy)\mu^l(dz)\\
&=\frac{1}{2\mu^l(B)}\sum_{j\in\{1,\dots,N_{d,l}\}^{m+1}}\sum_{k\in\{1,\dots,N_{d,l}\}^{m+1}}\int_{B\cap\psi_j(\mathrm{SG}(l))}\int_{B\cap\psi_k(\mathrm{SG}(l))}\left(f(y)-f(z)\right)^2\mu^l(dy)\mu^l(dz).
\end{align*}
We next replace $f(y)-f(z)$ by
\[f(y)-f(\psi_j(0))+f(\psi_j(0))-f(\psi_k(0))+f(\psi_k(0))-f(z)\]
and use that $(a+b+c)^2\leq 3(a^2+b^2+c^2)$ to deduce that
\begin{align}
\lefteqn{\int_{B\cap\psi_j(\mathrm{SG}(l))}\int_{B\cap\psi_k(\mathrm{SG}(l))}\left(f(y)-f(z)\right)^2\mu^l(dy)\mu^l(dz)}\nonumber\\
&\leq 3\int_{B\cap\psi_j(\mathrm{SG}(l))}\left(f(y)-f(\psi_j(0))\right)^2\mu^l(dy)\mu^l\left(B\cap\psi_k(\mathrm{SG}(l))\right)\nonumber\\
&\hspace{40pt}+3\left(f(\psi_j(0))-f(\psi_k(0))\right)^2\mu^l\left(B\cap\psi_j(\mathrm{SG}(l))\right)\mu^l\left(B\cap\psi_k(\mathrm{SG}(l))\right)\nonumber\\
&\hspace{40pt}+3\int_{B\cap\psi_k(\mathrm{SG}(l))}\left(f(z)-f(\psi_k(0))\right)^2\mu^l(dz)\mu^l\left(B\cap\psi_j(\mathrm{SG}(l))\right).\label{threeterms}
\end{align}
Now, to handle the first term here, we apply \eqref{rldef} and Lemma \ref{diambound} to obtain
\begin{align*}
{\int_{B\cap\psi_j(\mathrm{SG}(l))}\left(f(y)-f(\psi_j(0))\right)^2\mu^l(dy)}
&\leq N_{d,l}^{-(m+1)}\int_{\mathrm{SG}(l)}\left(f\circ\psi_j(y)-f\circ\psi_j(0)\right)^2\mu^l(dy)\\
&\leq N_{d,l}^{-(m+1)}\sup_{y\in \mathrm{SG}(l)}R^l(0,y)\mathcal{E}^l(f\circ\psi_j,f\circ\psi_j)\\
&\leq CN_{d,l}^{-(m+1)}\mathcal{E}^l(f\circ\psi_j,f\circ\psi_j).
\end{align*}
Obviously, it is also the case that, if $(j_1,\dots,j_m)\neq (i_1,\dots,i_m)$ or $B\cap\psi_j(\mathrm{SG}(l))=\emptyset$, then the left-hand side above is zero. Hence
\begin{align*}
\lefteqn{\frac{1}{\mu^l(B)}\sum_{j\in\{1,\dots,N_{d,l}\}^{m+1}}\sum_{k\in\{1,\dots,N_{d,l}\}^{m+1}}\int_{B\cap\psi_j(\mathrm{SG}(l))}\left(f(y)-f(\psi_j(0))\right)^2\mu^l(dy)\mu^l\left(B\cap\psi_k(\mathrm{SG}(l))\right)}\\
&\leq CN_{d,l}^{-(m+1)}\sum_{\substack{j\in\{1,\dots,N_{d,l}\}^{m+1}:\\(j_1,\dots,j_m)=(i_1,\dots,i_m),\\\psi_j(\mathrm{SG}(l))\cap B(x,r)\neq \emptyset}}\mathcal{E}^l(f\circ\psi_j,f\circ\psi_j).\hspace{200pt}
\end{align*}
The third term in \eqref{threeterms} can be dealt with in the same way. As for the second term, we have that
\begin{align*}
\lefteqn{\frac{1}{\mu^l(B)}\sum_{j,k\in\{1,\dots,N_{d,l}\}^{m+1}}\left(f(\psi_j(0))-f(\psi_k(0))\right)^2\mu^l\left(B\cap\psi_j(\mathrm{SG}(l))\right)\mu^l\left(B\cap\psi_k(\mathrm{SG}(l))\right)}\\
&\leq \frac{1}{N_{d,l}^{2(m+1)}\mu^l(B)}\sum_{\substack{j,k\in\{1,\dots,N_{d,l}\}^{m+1}:\\(j_1,\dots,j_m)=(i_1,\dots,i_m),\\(k_1,\dots,k_m)=(i_1,\dots,i_m)}}
\left(f(\psi_j(0))-f(\psi_k(0))\right)^2\mathbf{1}_{\{B\cap\psi_j(\mathrm{SG}(l))\neq\emptyset,\:B\cap\psi_k(\mathrm{SG}(l))\neq\emptyset\}}\\
&\leq\frac{1}{N_{d,l}^{2(m+1)}\mu^l(B)}\sum_{\substack{j,k\in\{1,\dots,N_{d,l}\}^{m+1}:\\(j_1,\dots,j_m)=(i_1,\dots,i_m),\\(k_1,\dots,k_m)=(i_1,\dots,i_m)}}
\left(f(\psi_j(0))-f(\psi_k(0))\right)^2\mathbf{1}_{\{\psi_j(0),\psi_k(0)\in B(x,r+l^{-(m+1)})\}}\\
&\leq\frac{1}{N_{d,l}^{2(m+1)}\mu^l(B)}\sum_{y,z\in V_{m+1}^l\cap B(x,r+l^{-(m+1)})\cap\psi_i(\mathrm{SG}(l))}\left(f(y)-f(z)\right)^2.
\end{align*}
Since the graph structure of $V_{m+1}^l\cap\psi_i(\mathrm{SG}(l))$ is simply a part of the triangular lattice, we can apply the discrete-space Poincar\'{e} inequality of \eqref{pi} to deduce that there exists a constant $C$ such that
\begin{align}
\lefteqn{\frac{1}{\mu^l(B)}\sum_{j,k\in\{1,\dots,N_{d,l}\}^{m+1}}\left(f(\psi_j(0))-f(\psi_k(0))\right)^2\mu^l\left(B\cap\psi_j(\mathrm{SG}(l))\right)\mu^l\left(B\cap\psi_k(\mathrm{SG}(l))\right)}\nonumber\\
&\leq  \frac{C}{N_{d,l}^{2(m+1)}\mu^l(B)}(rl^{m+1})^2\left| B_{i,m}\right|\sum_{\substack{y,z\in B_{i,m}:\\|y-z|=l^{-(m+1)}}}\left(f(y)-f(z)\right)^2\label{here},\hspace{100pt}
\end{align}
where we write $B_{i,m}:= V_{m+1}^l\cap B(x,r+l^{-(m+1)})\cap\psi_i(\mathrm{SG}(l))$.

Each of the sums can be bounded as follows:
\begin{align*}
\lefteqn{\sum_{\substack{y,z\in B_{i,m}:\\|y-z|=l^{-(m+1)}}}\left(f(y)-f(z)\right)^2}\\
&\leq (d+1)\sum_{\substack{j\in \{1,\dots,N_{d,l}\}^{m+1}:\\(j_1,\dots,j_m)=(i_1,\dots,i_m)}}\mathcal{E}_0\left(f\circ \psi_j,f\circ \psi_j\right)\mathbf{1}_{\{\psi_j(\mathrm{SG}(l))\cap  B_{i,m}\neq\emptyset\}}\\
&\leq (d+1)\sum_{\substack{j\in \{1,\dots,N_{d,l}\}^{m+1}:\\(j_1,\dots,j_m)=(i_1,\dots,i_m)}}\mathcal{E}^l\left(f\circ \psi_j,f\circ \psi_j\right)\mathbf{1}_{\{\psi_j(\mathrm{SG}(l))\cap B(x,5r)\neq \emptyset\}},
\end{align*}
where we have used that $\mathcal{E}_{0}(f\circ \psi_j,f\circ \psi_j)\leq \mathcal{E}^l(f\circ \psi_j,f\circ \psi_j)$ and $B(x,r+l^{-1})\subseteq B(x,5r)$ to obtain the second inequality. Moreover, it is elementary to check that $| B_{i,m}|\leq C N_{d,l}^{m+1}\mu^l(B)$. Hence, the expression in \eqref{here} is bounded above by
\[ \frac{C(rl^{m+1})^2}{N_{d,l}^{m+1}}\sum_{\substack{j\in \{1,\dots,N_{d,l}\}^{m+1}:\\(j_1,\dots,j_m)=(i_1,\dots,i_m)}}\mathcal{E}^l\left(f\circ \psi_j,f\circ \psi_j\right)\mathbf{1}_{\{\psi_j(\mathrm{SG}(l))\cap B(x,5r)\neq \emptyset\}}=\frac{C(rl^{m+1})^2}{\rho_l^{m+1}N_{d,l}^{m+1}}\Sigma,\]
where we use the abbreviation
\[\Sigma:=\rho_l^{m+1}
\sum_{\substack{j\in \{1,\dots,N_{d,l}\}^{m+1}:\\(j_1,\dots,j_m)=(i_1,\dots,i_m),\\\psi_j(\mathrm{SG}(l))\cap B(x,5r)\neq \emptyset}}\mathcal{E}^l\left(f\circ\psi_j,f\circ\psi_j\right).\]
Combining this estimate with the bounds on the first and third terms, we conclude that

\[\int_{B}\left(f(y)-\bar{f}^{l,i}_{B(x,r)}\right)^2\mu^l(dy)\leq C\rho_l^{-(m+1)}N_{d,l}^{-(m+1)}\left(1+(rl^{m+1})^2\right)\Sigma.\]
Writing all the scaling factors in terms of $l$ and applying the assumption that $r\geq\tfrac14 l^{-(m+1)}$, the right-hand side here is bounded above by
\[Cr^2l^{-(d_w(l)-2)(m+1)}\Sigma.\]
Finally, for $m\geq 1$, we use the fact that $r\geq\tfrac14 l^{-(m+1)}$ and $d_w(l)\geq 2$, to deduce this is bounded above by $r^{d_w(l)}$. For $m=0$, we recall that $\tau^l=\tfrac12 l^{2-d_w(l)}$ (see Theorem \ref{t:mr2}) to obtain the bound $C\tau^lr^2$. In particular, in either case, the bound can be written simply as $C\Psi_l(r)\Sigma$, and so the proof is complete.
\end{proof}

We continue by applying the above Poincar\'{e} inequality to give an on-diagonal heat kernel estimate, following closely the argument of \cite{HKpcf,KusZhou}. The final claim of the following lemma is the one that we will require in order to establish the main results of this section. Note the naturalness of the appearance of $l^{-2}$ in \eqref{hkub}; this is the time taken by $X^l_{\tau^l\cdot}$ to get beyond the fractal scale.

\begin{lemma}\label{hkbound} There exists a constant $c$, independent of $l$, such that
\[\sup_{x\in \mathrm{SG}(l)}p^l_{t}(x,x)\leq \left\{
                                               \begin{array}{ll}
                                                 c(t/\tau^l)^{-d/2}, & \forall t\in[l^{-d_w(l)},\tau^l]; \\
                                                 cl^{d-d_f(l)}t^{-d_f(l)/d_w(l)}, & \forall t\in(0,l^{-d_w(l)}].
                                               \end{array}
                                             \right.\]
In particular, there exists a constant $c$, independent of $l$, such that
\begin{equation}\label{hkub}
\sup_{x\in \mathrm{SG}(l)}p^l_{\tau^lt}(x,x)\leq ct^{-d/2},\qquad \forall t\in[l^{-2},1].
\end{equation}
\end{lemma}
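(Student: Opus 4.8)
The plan is to deduce the stated on-diagonal bounds from a Nash-type inequality adapted to the scale function $\Psi_l$, and then read off \eqref{hkub} as the Euclidean-regime special case. Concretely, I would first establish that there is a constant $C$, uniform in $l$, such that
\[\|f\|_{L^2(\mu^l)}^2\leq C\Psi_l(r)\mathcal{E}^l(f,f)+\frac{\|f\|_{L^1(\mu^l)}^2}{V_l(r)},\qquad \forall f\in\mathcal{F}^l,\ r\in(0,\tfrac14],\]
where $V_l(r)$ is the volume profile of $\mathrm{SG}(l)$, namely $V_l(r)\asymp r^{d_f(l)}$ for $r\leq l^{-1}$ and $V_l(r)\asymp r^{d}$ for $r\in[l^{-1},\tfrac14]$, both uniform in $l$. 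Granting this, the standard Nash-type semigroup argument (as in \cite{HKpcf,KusZhou}), in which one runs the differential inequality for $\|P^l_tf\|_2^2$ and optimises the free parameter $r$ at each time, yields $\sup_x p^l_t(x,x)\leq C/V_l(\Psi_l^{-1}(t))$ uniformly in $l$, provided $\Psi_l$ is uniformly doubling; the latter holds because $d_w(l)$ is bounded above uniformly in $l$ by Theorem \ref{fobw}. Substituting the explicit inverse $\Psi_l^{-1}$ then gives $t^{-d_f(l)/d_w(l)}$ in the fractal regime $t\leq l^{-d_w(l)}$ and $(t/\tau^l)^{-d/2}$ in the Euclidean regime $t\in[l^{-d_w(l)},\tau^l]$, while \eqref{hkub} is immediate from the latter; the factor-of-two mismatch at the crossover $\tau^l l^{-2}=\tfrac12 l^{-d_w(l)}$ is absorbed using the monotonicity of $t\mapsto p^l_t(x,x)$ together with $\Psi_l$-doubling.

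The heart of the matter is the Nash inequality, which I would obtain from the cell-Poincar\'e inequality of Lemma \ref{thm:lem4.5}. Fix $r\in(\tfrac14 l^{-(m+1)},\tfrac14 l^{-m}]$ and cover $\mathrm{SG}(l)$, up to a $\mu^l$-null set, by domains $D_k=B(x_k,r)\cap\psi_{i(k)}(\mathrm{SG}(l))$, where the $x_k$ range over an $r$-net of each $m$-cell $\psi_{i(k)}(\Delta)$; only $O_d(1)$ balls are needed per cell, and since distinct $m$-cells meet only at vertices, both the $D_k$ and the enlarged windows $B(x_k,5r)$ have $l$- and $m$-uniformly bounded overlap. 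On each $D_k$, Lemma \ref{thm:lem4.5} bounds $\int_{D_k}(f-\bar f^{l,i(k)}_{B(x_k,r)})^2\,d\mu^l$ by $C\Psi_l(r)\Sigma_k$, where $\Sigma_k$ is the localised energy on the right-hand side of \eqref{poincare}. Writing $\int_{D_k}f^2\,d\mu^l=\int_{D_k}(f-\bar f)^2\,d\mu^l+\mu^l(D_k)\bar f^2$ and bounding the second term by $\|f\|_{L^1(D_k)}^2/\mu^l(D_k)\leq\|f\|_{L^1(D_k)}^2/(cV_l(r))$, summing over $k$ produces the Nash inequality: bounded overlap turns $\sum_k\Sigma_k$ into $C\mathcal{E}^l(f,f)$ via the self-similar identity $\mathcal{E}^l(f,f)=\rho_l^{m+1}\sum_{j}\mathcal{E}^l(f\circ\psi_j,f\circ\psi_j)$ following from \eqref{edef}, while $\sum_k\|f\|_{L^1(D_k)}^2\leq\big(\sum_k\|f\|_{L^1(D_k)}\big)^2\leq C\|f\|_{L^1(\mu^l)}^2$. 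It is precisely the restriction to single cells in Lemma \ref{thm:lem4.5} that legitimises this covering, sidestepping the bottleneck obstruction discussed before that lemma; the case $m=0$ (where $\psi_{i(k)}$ is the identity and the internal estimate reduces to the lattice Poincar\'e \eqref{pi}) is what produces the Euclidean volume exponent $r^d$.

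It then remains to verify the two volume asymptotics with $l$-independent constants. Since each $m$-cell has mass $N_{d,l}^{-m}$ and diameter comparable to $l^{-m}$, a ball of radius $r\in(\tfrac14 l^{-(m+1)},\tfrac14 l^{-m}]$ meets $O_d(1)$ such cells and contains a definite fraction of at least one, giving $V_l(r)\asymp N_{d,l}^{-m}\asymp r^{d_f(l)}$ for $r\leq l^{-1}$; for $r\in[l^{-1},\tfrac14]$, the level-one cells inside $B(x,r)$ are arranged like $\asymp(rl)^d$ lattice cells of mass $N_{d,l}^{-1}$ each, so $V_l(r)\asymp (rl)^d N_{d,l}^{-1}=r^d\, l^{d-d_f(l)}$, which is comparable to $r^d$ uniformly because $l^{d-d_f(l)}=l^d/N_{d,l}\to d!$ is bounded. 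This bounded factor $l^{d-d_f(l)}$ is exactly what surfaces in the fractal-regime bound of the lemma. I expect the main difficulty to lie entirely in this first, Nash-inequality, step: one must run the covering and summation while keeping every geometric constant (balls per cell, overlap multiplicities, and the lower bound $\mu^l(D_k)\geq cV_l(r)$) uniform in both $l$ and the level $m$, and confirm that the two volume regimes glue consistently at the crossover scale $r=l^{-1}$. By contrast, once the Nash inequality is available, the passage to the on-diagonal heat-kernel bound and the final substitution of $\Psi_l^{-1}$ and $\tau^l=\tfrac12 l^{2-d_w(l)}$ are routine.
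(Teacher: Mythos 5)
Your proposal is correct and follows essentially the same route as the paper: the paper likewise covers each $m$-cell by balls with uniformly bounded overlap, feeds Lemma \ref{thm:lem4.5} and the self-similar identity \eqref{edef} into the differential inequality $\frac{d}{dt}\|P^l_tu_0\|_2^2\leq -c_1\Psi_l(r)^{-1}(\|P^l_tu_0\|_2^2-c_2V_l(r)^{-1})$ (which is exactly your Nash inequality applied to $u_t$ with $\|u_0\|_1=1$), and then iterates over scales $r_n=\alpha^n$ using the uniform doubling of $\Psi_l$ and $V_l$ before concluding via $\|P^l_t\|_{1\to\infty}=\|P^l_t\|_{1\to2}^2$. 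The only difference is presentational: you package the functional inequality for general $f$ before running the semigroup argument, while the paper runs it directly on $g(t)=\|u_t\|_2^2$.
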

\begin{proof}
Let $u_0\in L^2(\mathrm{SG}(l),\mu^l)$ with $u_0 \geq 0$ and $\|u_0\|_1 = 1$. (In this proof, we suppress the dependence on $l$ in the notation of norms.) Set $u_t(x) = (P^l_t u_0)(x)$, where $(P^l_t)_{t\geq 0}$ is the semigroup associated with $X^l$, and $g(t) = \|u_t\|_2^2$; note that $g$ is continuous and decreasing. Define
\[
V_l(r)=\left\{
           \begin{array}{ll}
             l^{-d+d_f(l)}r^{d_f(l)}, & \hbox{if $r\le l^{-1}$;} \\
              r^{d}, & \hbox{if $r\ge l^{-1}$.}
           \end{array}
\right.\]
(We highlight that, from \eqref{ndl}, it is possible to check $l^{-d+d_f(l)}\rightarrow 1$ as $l\rightarrow \infty$, and so this term is bounded above and below by constants, uniformly in $l$; we include it to ensure the function $V_l$ is continuous.) Now, suppose $m\geq 0$ and $r\in  (\tfrac14l^{-(m+1)},\tfrac14l^{-m}]$. For each $i\in \{1,\dots,N_{d,l}\}^{m}$, it is possible to cover $\psi_i(\mathrm{SG}(l))$ by balls of the form $B(x,r)$ where $x\in \psi_i(\Delta)$ and, for each $j\in \{1,\dots,N_{d,l}\}^{m+1}$ with $(j_1,\dots,j_m)=(i_1,\dots,i_m)$, $\psi_j(\mathrm{SG}(l))$ has a non-empty intersection with at most a constant number $C_0$ of the sets $B(x,5r)$; the constant $C_0$ can be chosen independently of $l$, $m$ and $r$. Write $\Gamma_i$ for the centres of the balls in this cover of $\psi_i(\mathrm{SG}(l))$. Applying the self-similarity of the Dirichlet form $\mathcal{E}^l$, as follows from \eqref{edef}, we then deduce that
\begin{eqnarray}
\frac{d}{dt} g(t) &=& -2 {\mathcal E}^l(u_t,u_t)\nonumber\\
&=&-2\rho_l^{m+1}\sum_{j\in  \{1,\dots,N_{d,l}\}^{m+1}}{\mathcal E}^l(u_t\circ\psi_j,u_t\circ\psi_j)\nonumber\\
&\leq &-2C_0^{-1}\sum_{i\in  \{1,\dots,N_{d,l}\}^{m}}\sum_{x\in \Gamma_i}\rho_l^{m+1}\sum_{\substack{j\in  \{1,\dots,N_{d,l}\}^{m+1}:\\(j_1,\dots,j_m)=(i_1,\dots,i_m),\\\psi_j(\mathrm{SG}(l))\cap B(x,5r)\neq\emptyset}}{\mathcal E}^l(u_t\circ\psi_j,u_t\circ\psi_j)\nonumber \\
&\leq &-C\Psi_l(r)^{-1}\sum_{i\in  \{1,\dots,N_{d,l}\}^{m}}\sum_{x\in \Gamma_i}\int_{B(x,r)\cap\psi_i(\mathrm{SG}(l))}\left(u_t(y)-\bar{u}_t^{x,i}\right)^2\mu^l(dy),\nonumber
\end{eqnarray}
where $\bar{u}_t^{x,i}$ is the average of $u_t$ on the set $B(x,r)\cap\psi_i(\mathrm{SG}(l))$ (defined similarly to the definition of $\bar{f}^{l,i}_{B(x,r)}$ in the statement of Lemma \ref{thm:lem4.5}) and, to deduce the final equality, we have applied Lemma \ref{thm:lem4.5}. Now, we observe that the inner integral can be written
\[\int_{B(x,r)\cap\psi_i(\mathrm{SG}(l))}u_t(y)^2\mu^l(dy)-\mu^l\left(B(x,r)\cap\psi_i(\mathrm{SG}(l))\right)\left(\bar{u}_t^{x,i}\right)^2.\]
Since the sets $B(x,r)$, $x\in\Gamma_i$, $i\in   \{1,\dots,N_{d,l}\}^{m}$ form a cover of $\mathrm{SG}(l)$, it holds that
\[\sum_{i\in  \{1,\dots,N_{d,l}\}^{m}}\sum_{x\in \Gamma_i}\int_{B(x,r)\cap\psi_i(\mathrm{SG}(l))}u_t(y)^2\mu^l(dy)\geq \int_{\mathrm{SG}(l)}u_t(y)^2\mu^l(dy)=g(t).\]
Moreover, as the semigroup is conservative, $\|u_t\|_1 = 1$, which readily implies
\[\bar{u}_t^{x,i}\leq \frac{1}{\mu^l\left(B(x,r)\cap\psi_i(\mathrm{SG}(l))\right)}\leq \frac{C}{V_l(r)}.\]
Putting these estimates together yields
\[\frac{d}{dt} g(t)\leq -c_1\Psi_l(r)^{-1}\left(g(t)-c_2V_l(r)^{-1}\right),\]
for any $r\leq \tfrac14$. Therefore, if $g(t)> c_2V_l(r)^{-1}$,
\begin{equation}
 -\frac{d}{dt} \log{\left(g(t)-c_2V_l(r)^{-1}\right)} \geq 2c_1 \Psi_l(r)^{-1}. \label{eq:dineq}
\end{equation}
Next, fix  $\alpha\in (0,1)$, set $r_n:=\alpha^n$ and define $s_n = \inf \{ t\geq 0 : g(t) \leq c_2V_l(r_n)^{-1} \}$ for $n\in \mathbb{N}$. Then (\ref{eq:dineq}) holds with $r=r_n$ for $0 < t < s_n$. With $r=r_n$, integrating (\ref{eq:dineq}) over $t$ from $s_{n+2}$ to $s_{n+1}$, we thus obtain
\begin{eqnarray*}
 2c_1 \Psi_l(r_n)^{-1} \left(s_{n+1}-s_{n+2}\right) &\leq& -\log{\left(g(s_{n+1})-c_2V_l(r_n)^{-1}\right)} +
    \log{\left(g(s_{n+2})-c_2V_l(r_n)^{-1}\right)}   \\
  &=& \log{\left(\frac{c_2V_l(r_{n+2})^{-1}- c_2V_l(r_n)^{-1}}{c_2V_l(r_{n+1})^{-1}-
 c_2V_l(r_n)^{-1}}\right)}\\
 & =&\log\left({\frac{(V_l(r_n)/V_l(r_{n+2}))- 1}{(V_l(r_n)/V_l(r_{n+1}))-1}}\right)\\
 &\leq &c_3,
\end{eqnarray*}
for some $c_3>0$ independent of $l$, where the last inequality is due to the fact that
\[\alpha^{-1}\le \alpha^{-d_f(l)}\le V_l(r_n)/V_l(r_{n+1})\le \alpha^{-d},\]
which can easily be obtained from the definition of $V_l(r)$ and the observation that $d_f(l)\ge 1$. Hence we have shown that $s_{n+1} - s_{n+2} \leq c_4  \Psi_l(r_n)$, and iterating this gives
\[s_n \leq c_4 \sum_{k=n-1}^{\infty}  \Psi_l(r_k) \leq c_5  \Psi_l(r_n).\]
This implies that $g(c_5  \Psi_l(r_n)) \leq g(s_n) =c_2V_l(r_n)^{-1}$. It follows in turn that if $c_5  \Psi_l(r_{n+1}) \leq t < c_5  \Psi_l(r_n)$, then
\[ g(t) \leq  c_2V_l(r_n)^{-1} \leq \left\{\begin{array}{ll}
c_6(t/\tau^l)^{-d/2}, & \forall t\in[l^{-d_w(l)},\tau^l]; \\
 c_6l^{d-d_f(l)}t^{-d_f(l)/d_w(l)}, & \forall t\in(0,l^{-d_w(l)}].
  \end{array}\right.\]
(Here, we note that by Remark \ref{HKcomp}(a), it holds that $c_7\Psi_l(r_{n})\le  \Psi_l(r_{n+1})$ for some $c_7>0$ which is independent of $l$.) Using the fact that $\|P_t^l\|_{1\rightarrow\infty} = \|P_t^l\|_{1\rightarrow2}^2$, we deduce the desired heat kernel upper bound.
\end{proof}

Now the basic preparations are in place, we are ready to start developing continuity results. We proceed by applying the elliptic Harnack inequality of Lemma \ref{L.R2} to give a uniform H\"{o}lder continuity property for harmonic functions.

\begin{theorem}\label{T.R1}
There exist constants $c_1$ and $\gamma'>0$, independent of $l$, such that if $h_l$ is bounded and harmonic for $({\cal E}_{l}, {\cal F}_{l})$ in a ball $B(x_0,2r)$ for some $x_0\in \mathrm{SG}(l)$ and $r\in (8l^{-1},\tfrac14)$, then
\begin{equation}\label{C03}
\left|h_l(x)-h_l(y)\right|\leq c_1\left(\frac{|x-y|\vee l^{-1}}{r}\right)^{\gamma'}\|h_l\|_\infty, \qquad \forall x,y\in B(x_0,r)\cap \mathrm{SG}(l).
\end{equation}
\end{theorem}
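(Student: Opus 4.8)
The plan is to run the classical oscillation-reduction (De Giorgi--Nash--Moser-type) argument, deriving a geometric decay of oscillation from the elliptic Harnack inequality of Lemma \ref{L.R2}(b), and then iterating it down to the scale $l^{-1}$ below which that inequality ceases to hold uniformly in $l$. This is the argument alluded to in the proof of Proposition \ref{p:yscaling}.

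The first step is the oscillation reduction. Fix $s\in(8l^{-1},\tfrac14)$ and suppose $h_l$ is harmonic on $B(z,2s)$. Writing $M:=\sup_{B(z,2s)\cap\mathrm{SG}(l)}h_l$ and $m:=\inf_{B(z,2s)\cap\mathrm{SG}(l)}h_l$, the functions $M-h_l$ and $h_l-m$ are non-negative and harmonic on $B(z,2s)$, so applying Lemma \ref{L.R2}(b) to each yields
\[M-\inf_{B(z,s)}h_l\le \theta\left(M-\sup_{B(z,s)}h_l\right),\qquad \sup_{B(z,s)}h_l-m\le\theta\left(\inf_{B(z,s)}h_l-m\right).\]
Adding these and rearranging, with the shorthand $\mathrm{osc}_Ah_l:=\sup_Ah_l-\inf_Ah_l$, gives
\[\mathrm{osc}_{B(z,s)}h_l\le\delta\,\mathrm{osc}_{B(z,2s)}h_l,\qquad \delta:=\frac{\theta-1}{\theta+1}\in[0,1),\]
where $\delta<1$ since $\theta\ge1$, and $\delta$ is independent of $l$ because $\theta$ is.

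Next I would re-centre and iterate. Given $x,y\in B(x_0,r)\cap\mathrm{SG}(l)$, set $\rho:=|x-y|\vee l^{-1}$. Since $B(x,r)\subseteq B(x_0,2r)$, the function $h_l$ is harmonic on $B(x,r)$, so the reduction may be applied on the concentric balls $B(x,2^{-j}r)$ for $j=1,2,\dots$, provided the radii remain in $(8l^{-1},\tfrac14)$. Iterating down to the largest $k$ with $2^{-k}r\ge 8\rho$ (which keeps all radii above $8l^{-1}$, as $\rho\ge l^{-1}$), we obtain
\[\mathrm{osc}_{B(x,2^{-k}r)}h_l\le\delta^{k}\,\mathrm{osc}_{B(x,r)}h_l\le 2\delta^{k}\|h_l\|_\infty,\]
with $2^{-k}r\in[8\rho,16\rho)$ and $k=\log_2(r/\rho)+O(1)$, so that $\delta^{k}\le c\,(\rho/r)^{\gamma'}$ for $\gamma':=\log(1/\delta)/\log2>0$. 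As $|x-y|\le\rho$ ensures $y\in B(x,2^{-k}r)$, we have $|h_l(x)-h_l(y)|\le \mathrm{osc}_{B(x,2^{-k}r)}h_l$, and combining the displays yields \eqref{C03}.

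The main subtlety---and the reason the bound features $|x-y|\vee l^{-1}$ rather than $|x-y|$---is the scale cutoff: Lemma \ref{L.R2}(b) provides the Harnack inequality with an $l$-uniform constant only for radii exceeding $8l^{-1}$, below which the fractal microstructure destroys the uniform comparison. Hence the iteration must terminate at a scale $\asymp\rho\ge l^{-1}$, and the care required is purely bookkeeping: verifying that the terminal radius is comparable to $\rho$ (using $\rho\ge l^{-1}$), that both $x$ and $y$ lie in the terminal ball, and that every constant produced ($\theta$, hence $\delta$, $\gamma'$ and the multiplicative $c_1$) is genuinely independent of $l$, the latter being immediate from the $l$-uniformity asserted in Lemma \ref{L.R2}(b).
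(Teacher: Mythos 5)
Your proof is correct and follows essentially the same strategy as the paper's: an oscillation-decay estimate derived from the elliptic Harnack inequality of Lemma \ref{L.R2}(b), iterated over dyadic concentric balls down to the cutoff scale $|x-y|\vee l^{-1}$ below which the Harnack constant is no longer $l$-uniform. The only cosmetic difference is in how the decay factor is extracted: you apply the Harnack inequality to $M-h_l$ and $h_l-m$ to get $\delta=(\theta-1)/(\theta+1)$, whereas the paper applies it to the exit distribution $\mathbf{P}^l_{\cdot}(X^l_T\in A)$ to get $p=1-(4\theta)^{-1}$; both are standard and serve the same purpose.
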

\begin{proof}
We follow the arguments in \cite[Lemma 3.8, Theorem 3.9]{BB1}. Let $h:=h_l$ be a bounded harmonic function for $({\cal E}_{l}, {\cal F}_{l})$ in $B(x_0,2r)$ for some $x_0\in \mathrm{SG}(l)$ and $r\in (8l^{-1},\tfrac14)$. For $m\in {\mathbb N}\cup\{0\}$ and $x\in B(x_0,r)\cap \mathrm{SG}(l)$, set
\[O_m (x,h):=\sup_{y\in B(x,2^{-m}r)\cap \mathrm{SG}(l)}h(y)-\inf_{y\in B(x,2^{-m}r)\cap \mathrm{SG}(l)}h(y).\]
We claim that there exists a constant $p\in (0,1)$, independent of $l$, $x$, $x_0$, $r$, $m$ and $h$, such that
\begin{equation}\label{osc-ineqa}
O_{m+1} (x,h)\le p O_m (x,h),
\end{equation}
at least whenever $2^{-(m+1)}r\geq 8l^{-1}$. When proving \eqref{osc-ineqa}, by rescaling, we may assume $-1\le h \le 1$ and $O_m (x,h)=2$. Define
\begin{eqnarray*}
 T&:=&\inf\left\{t\ge 0: X^{l}_t\in \partial B(x,2^{-m}r)\cap \mathrm{SG}(l)\right\},\\
 A&:=&\left\{y\in \partial B(x,2^{-m}r)\cap \mathrm{SG}(l): h(y)\le 0\right\}.
\end{eqnarray*}
Clearly either ${\mathbf P}^l_x( X^{l}_T\in A)\ge 1/2$ or ${\mathbf P}^l_x( X^{l}_T\in A^c)\ge 1/2$ holds; without loss of generality we may assume (by multiplying $h$ by $-1$ if needed) the former option holds. Then, for $y\in B(x,2^{-m-1}r)\cap \mathrm{SG}(l)$,
\begin{eqnarray*}
 h(y)&=&{\mathbf E}^l_y\left(h(X^{l}_T)\right)\\
 &=&{\mathbf E}^l_y\left(h(X^{l}_T): X^{l}_T\in A^c\right)+{\mathbf E}^l_y\left(h(X^{l}_T): X^{l}_T\in A\right)\\
 &\le & {\mathbf P}^l_y\left( X^{l}_T\in A^c\right)\\
 &=&1-{\mathbf P}^l_y\left( X^{l}_T\in A\right).
\end{eqnarray*}
Now, since ${\mathbf P}^l_\cdot( X^{l}_T\in A)$ is harmonic on $B(x,2^{-m}r)\cap \mathrm{SG}(l)$, when $2^{-(m+1)}r\geq 8l^{-1}$, we can apply Lemma \ref{L.R2}(ii) to obtain that
\[{\mathbf P}^l_y\left( X^{l}_T\in A\right)\ge \theta^{-1} {\mathbf P}^l_x\left( X^{l}_T\in A\right)\ge (2\theta)^{-1}\]
for some $\theta\geq 1$. This yields that $h(y)\le 1- (2\theta)^{-1}$, and hence
\[ O_{m+1} (x,h)\le 1- \frac 1{2\theta}-(-1)=2- \frac 1{2\theta}=pO_{m} (x,h),\]
where $p:=1- (4\theta)^{-1}\in (0,1)$. This proves \eqref{osc-ineqa}.

We now prove \eqref{C03}. Let $x,y\in B(x_0,r)\cap \mathrm{SG}(l)$. Suppose that there exists an $m\in {\mathbb N}\cup\{0\}$ so that
$8l^{-1}\leq 2^{-(m+1)}r\le |x-y|< 2^{-m}r$ (which is possible if $y\in B(x,r)\backslash B(x,16l^{-1})$). Then, using \eqref{osc-ineqa},
\[|h(x)-h(y)|\le  O_{m} (x,h)\le p^m  O_{0} (x,h)\le 2p^m\|h\|_\infty\le c_1\left(\frac {|x-y|}r\right)^\gamma\|h\|_\infty,\]
where $\gamma'=\log p^{-1}/\log 2$ and $c_1=2^{1+\gamma}$. Moreover, if $y\in B(x,16l^{-1})\cap \mathrm{SG}(l)$, then one can simply note that $h_l$ takes its maximum and minimum values in $B(x,16l^{-1})\cap \mathrm{SG}(l)$ on the boundary of this set, and thus the difference between $h(x)$ and $h(y)$ is also obtained there. Together with the above bound, this observation implies
\[|h(x)-h(y)|\le  c_1\left(\frac {|x-y|\vee l^{-1}}r\right)^\gamma\|h\|_\infty.\]
For $y\not\in B(x,r)$, we apply the triangle inequality to deduce that $|h(x)-h(y)|\leq |h(x)-h(x_0)|+|h(y)-h(x_0)|$, and then apply the above estimate to each part.
\end{proof}

Following the arguments in \cite[Section 3]{BKK}, our subsequent goal is to show the $\lambda$-potentials associated with $X^l$, $l\geq 2$, are uniformly H\"older continuous. Specifically, we define
\[U^l_\lambda f(x)={\mathbf E}^l_x\int_0^\infty e^{-\lambda t} f(X^{l}_t)\, dt.\]

\begin{propn}\label{PR3}
There exist constants $c_1$ and $\gamma$, independent of $l$ and $\lambda$, such that if $f_l:\mathrm{SG}(l)\rightarrow\mathbb{R}$ is a bounded, measurable function, then
\[\left|U^l_\lambda f_l(x)-U^l_\lambda f_l(y)\right|\leq c_1 \left(\tau^l+\lambda^{-1}\right) \left(\left|x-y\right|\vee l^{-1}\right)^{\gamma} \|f_l\|_\infty,\qquad\forall x,y\in\mathrm{SG}(l).\]
\end{propn}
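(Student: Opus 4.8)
The plan is to follow the now-standard route (as in \cite[Section 3]{BKK}) of writing $u:=U^l_\lambda f_l$ as a harmonic function plus a controllable error, and then invoking the uniform H\"{o}lder estimate for harmonic functions from Theorem \ref{T.R1}. First I would record the two a priori bounds that drive everything: since $f_l$ is bounded, $\|u\|_\infty\leq \lambda^{-1}\|f_l\|_\infty$; and, fixing $x\in\mathrm{SG}(l)$ and a radius $r$, the strong Markov property of $X^l$ at the exit time $\bar\sigma:=\bar\sigma^l(x,2r)$ of $B(x,2r)$ gives the resolvent decomposition
\[u(z)=\mathbf{E}^l_z\left(\int_0^{\bar\sigma}e^{-\lambda t}f_l(X^l_t)\,dt\right)+\mathbf{E}^l_z\left(e^{-\lambda\bar\sigma}u(X^l_{\bar\sigma})\right),\qquad z\in\mathrm{SG}(l).\]
I would then set $h(z):=\mathbf{E}^l_z(u(X^l_{\bar\sigma}))$, which is bounded by $\|u\|_\infty$ and harmonic for $(\mathcal{E}_l,\mathcal{F}_l)$ on $B(x,2r)$, so that Theorem \ref{T.R1} is applicable to it.

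The second step is to estimate $u-h$ uniformly on $B(x,2r)$. Subtracting the two expressions and using $|1-e^{-\lambda\bar\sigma}|\leq \lambda\bar\sigma$ together with $\|u\|_\infty\leq\lambda^{-1}\|f_l\|_\infty$, each of the two resulting terms is at most $\|f_l\|_\infty\,\mathbf{E}^l_z(\bar\sigma)$, whence
\[\sup_{z\in B(x,2r)}\left|u(z)-h(z)\right|\leq 2\|f_l\|_\infty\sup_{z}\mathbf{E}^l_z\bar\sigma^l(x,2r)\leq C\|f_l\|_\infty\,\Psi_l(2r),\]
by the exit-time bound of Lemma \ref{L.R2}(a). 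For $r>\tfrac12 l^{-1}$ (the only regime I will need) one has $\Psi_l(2r)=2\tau^l(2r)^2$, so this error is of order $\tau^l r^2\|f_l\|_\infty$ — exactly the place where the time scale $\tau^l$ enters the final constant.

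Finally I would combine the pieces. For $x,y$ with $s:=|x-y|\vee l^{-1}$ small, apply the triangle inequality through $h$: the error term is controlled twice by $C\tau^l r^2\|f_l\|_\infty$, while Theorem \ref{T.R1} (centred at $x$, valid since $y\in B(x,r)$) bounds $|h(x)-h(y)|$ by $C(s/r)^{\gamma'}\|h\|_\infty\leq C\lambda^{-1}(s/r)^{\gamma'}\|f_l\|_\infty$. Choosing $r=s^{1/2}$ balances the two regimes and yields $|u(x)-u(y)|\leq C\tau^l s\|f_l\|_\infty+C\lambda^{-1}s^{\gamma'/2}\|f_l\|_\infty\leq C(\tau^l+\lambda^{-1})s^{\gamma}\|f_l\|_\infty$ with $\gamma=\min\{1,\gamma'/2\}$, using $s\leq 1$. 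The admissibility constraints $8l^{-1}<r<\tfrac18$ translate into $64l^{-2}<s<\tfrac1{64}$, which holds automatically once $l$ is large (since $s\geq l^{-1}$); the complementary cases — large $s$, and the finitely many small $l$ — are disposed of trivially using $\|u\|_\infty\leq\lambda^{-1}\|f_l\|_\infty$ and the fact that $s^\gamma$ is then bounded below by a positive constant.

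The heavy analytic inputs (the exit-time estimate and the Harnack-based H\"{o}lder regularity) are already established, so the main thing to get right is the bookkeeping: tracking the two scaling regimes encoded in $\Psi_l$, ensuring the choice $r=s^{1/2}$ produces the claimed clean bound $(\tau^l+\lambda^{-1})s^\gamma$ with constants independent of both $l$ and $\lambda$, and checking that $r$ stays in the range where both Lemma \ref{L.R2}(a) and Theorem \ref{T.R1} apply. I expect the edge cases around the lower cutoff $8l^{-1}$ (equivalently, the finitely many small $l$) to be the only genuinely fiddly point.
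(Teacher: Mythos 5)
Your argument is correct and follows essentially the same route as the paper's proof: the strong Markov decomposition of $U^l_\lambda f_l$ at the exit time of $B(\cdot,2r)$, the bound $c\Psi_l(r)\|f_l\|_\infty$ on the non-harmonic terms via Lemma \ref{L.R2}(a) and $\|U^l_\lambda f_l\|_\infty\le\lambda^{-1}\|f_l\|_\infty$, and Theorem \ref{T.R1} applied to the harmonic remainder, followed by optimising over $r$. The only (immaterial) differences are that you take $r=s^{1/2}$ rather than the balanced choice $r=s^{\gamma'/(2+\gamma')}$, which yields a slightly smaller but still admissible exponent $\gamma$, and you dispose of the out-of-range cases by the trivial bound $\|U^l_\lambda f_l\|_\infty\le\lambda^{-1}\|f_l\|_\infty$ instead of the paper's triangle-inequality extension of the range of $r$.
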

\begin{proof}
Fix $x_0\in \mathrm{SG}(l)$  and $r\in (8l^{-1},\tfrac18)$. Suppose $x,y\in B(x_0,r)\cap \mathrm{SG}(l)$. Define $\bar{\sigma}_r:= \bar{\sigma}^l(x_0,2r)$ and $f:=f_l$. By applying the strong Markov property at time $\bar{\sigma}_r$,
\begin{align*}
U^l_\lambda f(x)&={\mathbf E}^l_x\int_0^{\bar{\sigma}_r} e^{-\lambda t} f(X^{l}_t) \, dt
+{\mathbf E}^l_x \left((e^{-\lambda \bar{\sigma}_r}-1)U^l_\lambda f(X^{l}_{\bar{\sigma}_r})\right) + {\mathbf E}^l_x U^l_\lambda f(X^{l}_{\bar{\sigma}_r})\\
&=I_1+I_2+I_3,
\end{align*}
and similarly when $x$ is replaced by $y$. By Lemma \ref{L.R2}, we have
\[|I_1|\leq \|f\|_\infty {\mathbf E}^l_x \bar{\sigma}_r\leq c\Psi_l(r)\|f\|_\infty.\]
Moreover, by the observation that $|1-e^{-u}|\leq u$ for all $u\geq 0$ and Lemma \ref{L.R2},
\[|I_2|\leq \lambda {\mathbf E}^l_x \bar{\sigma}_r \|U^l_\lambda f\|_\infty\leq c\Psi_l(r) \|f\|_\infty.\]
Similar bounds hold when $x$ is replaced by $y$, and so we deduce that
\begin{equation}\label{C05}
\left|U^l_\lambda f(x)-U^l_\lambda f(y)\right|\leq c\Psi_l(r) \| f\|_\infty+\left|{\mathbf E}^l_x U^l_\lambda f(X^{l}_{\bar{\sigma}_r})-{\mathbf E}^l_y U^l_\lambda f(X^{l}_{\bar{\sigma}_r})\right|.
\end{equation}
Now, the function $z\to {\mathbf E}^l_z U^l_\lambda f(X^{l}_{\bar{\sigma}_r})$ is bounded and harmonic for $({\cal E}_{l}, {\cal F}_{l})$
in $B(x_0,2r)\cap \mathrm{SG}(l)$, and thus, by Theorem \ref{T.R1}, the second term in \eqref{C05} is bounded by
$$c\left(\frac{|x-y|\vee l^{-1}}{r}\right)^{\gamma'} \|U^l_\lambda f\|_\infty\leq c\lambda^{-1}\left(\frac{|x-y|\vee l^{-1}}{r}\right)^{\gamma'} \| f\|_\infty.$$
Summarising, we have shown that
\[\left|U^l_\lambda f(x)-U^l_\lambda f(y)\right|\leq c\left(\Psi_l(r)+\lambda^{-1}\left(\frac{|x-y|\vee l^{-1}}{r}\right)^{\gamma'}\right)\]
for any $x,y\in B(x_0,r)\cap \mathrm{SG}(l)$, where $x_0\in \mathrm{SG}(l)$ and $r\in (8l^{-1},\tfrac18)$. Moreover, by applying the triangle inequality, it is straightforward to extend the range of $r$ from $(8l^{-1},\tfrac18)$ to $(8l^{-1},r_0]$ for any $r_0>0$; in particular, we take $r_0=8$. Hence, for $x,y\in\mathrm{SG}(l)$, we can take $x_0=x$ and $r=(|x-y|\vee 8^{(2+\gamma')/\gamma'}l^{-1})^{\gamma'/(2+\gamma')}\in(8l^{-1},r_0)$ to conclude
\begin{eqnarray*}
\lefteqn{\left|U^l_\lambda f(x)-U^l_\lambda f(y)\right|}\\&\leq &c\left(\Psi_l\left(\left(|x-y|\vee 8^{(2+\gamma')/\gamma'}l^{-1}\right)^{\gamma'/(2+\gamma')}\right)+\lambda^{-1}\left(|x-y|\vee l^{-1}\right)^{2\gamma'/(2+\gamma')}\right)\|f\|_\infty\\
&\leq & c\left(\tau^l+\lambda^{-1}\right) \left(|x-y|\vee l^{-1}\right)^{2'\gamma/(2+\gamma')}\|f\|_\infty,
\end{eqnarray*}
which completes the proof.
\end{proof}

We transfer Proposition \ref{PR3} to the semigroup by appealing to spectral theory. In particular, for each $l\geq 2$, let $(P_t^{l})_{t\geq 0}$ be the semigroup of $X^l$. By the standard spectral theorem, there exist projection operators $(E_\nu^{l})_{\nu\geq 0}$ on the space $L^2(\mathrm{SG}(l), \mu^l)$ such that: for $f\in L^2(\mathrm{SG}(l), \mu^l)$,
\begin{align}
f&=\int_0^\infty \, dE^{l}_\nu(f),\nonumber\\
P^{l}_tf&=\int_0^\infty e^{-\nu t} \, dE^{l}_\nu(f),\nonumber\\
 U^l_\lambda f&=\int_0^\infty \frac{1}{\lambda+\nu} \, dE^{l}_\nu(f).\label{C07}
\end{align}
(Note that the integration above is with respect to $\nu$.)

\begin{propn}\label{PR4}
If $f_l$ is in $L^2(\mathrm{SG}(l), \mu^l)$, then $P^{l}_{\tau^l t}f_l$ is almost-surely equal to a function that is H\"older continuous. Specifically, for this version of $P^{l}_{\tau^l t}f_l$, it holds that
\[\left|P^{l}_{\tau^l t}f_l (x)-P^{l}_{\tau^l t}f_l (y)\right|\leq Ct^{-(1+d/2)}\left(|x-y|\vee l^{-1}\right)^{\gamma} \|f_l\|_2,\qquad\forall x,y\in\mathrm{SG}(l),\:t\in[2l^{-2},1],\]
where $\gamma$ is the constant of Proposition \ref{PR3} and $C$ is a constant independent of all the other variables.
\end{propn}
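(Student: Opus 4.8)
The plan is to use the spectral calculus of \eqref{C07} to realise the semigroup as the resolvent $U^l_\lambda$ composed with a smoothing operator, and then feed the latter into the resolvent Hölder estimate of Proposition \ref{PR3}. Fix $t\in[2l^{-2},1]$, write $s:=\tau^lt$, and take $\lambda:=s^{-1}=(\tau^lt)^{-1}$. Using the spectral projections $(E^l_\nu)_{\nu\ge0}$, I would define a bounded self-adjoint operator $A_\lambda$ on $L^2(\mathrm{SG}(l),\mu^l)$ by the functional calculus with multiplier $m_\lambda(\nu):=(\lambda+\nu)e^{-\nu s}$, i.e.\ $A_\lambda g=\int_0^\infty(\lambda+\nu)e^{-\nu s}\,dE^l_\nu(g)$. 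Since $\tfrac{1}{\lambda+\nu}\cdot(\lambda+\nu)e^{-\nu s}=e^{-\nu s}$, comparison with \eqref{C07} shows that $U^l_\lambda A_\lambda=P^l_s$ as operators on $L^2$; in particular $P^l_{\tau^lt}f_l=U^l_\lambda(A_\lambda f_l)$ in $L^2$.

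Next I would check that $A_\lambda f_l$ has a bounded (indeed continuous) version, so that Proposition \ref{PR3} applies. To this end, factorise $A_\lambda=P^l_{s/2}\circ B_\lambda$, where $B_\lambda$ is the functional-calculus operator with multiplier $(\lambda+\nu)e^{-\nu s/2}$ (the product of the two multipliers is $m_\lambda$). Then $\|A_\lambda f_l\|_\infty\le\|P^l_{s/2}\|_{2\to\infty}\,\|B_\lambda\|_{2\to2}\,\|f_l\|_2$. An elementary maximisation gives $\|B_\lambda\|_{2\to2}=\sup_{\nu\ge0}(\lambda+\nu)e^{-\nu s/2}\le 2/s$ for the choice $\lambda=s^{-1}$, while Cauchy--Schwarz together with the Chapman--Kolmogorov identity $\int p^l_{s/2}(x,z)^2\mu^l(dz)=p^l_{s}(x,x)$ yields $\|P^l_{s/2}\|_{2\to\infty}\le(\sup_x p^l_{s}(x,x))^{1/2}$. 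The on-diagonal bound \eqref{hkub} of Lemma \ref{hkbound} then gives $\sup_x p^l_{\tau^lt}(x,x)\le ct^{-d/2}$ for $t\in[l^{-2},1]$ (the restriction $t\ge2l^{-2}$ ensures every time at which \eqref{hkub} is invoked lies in this range), so $\|P^l_{s/2}\|_{2\to\infty}\le c^{1/2}t^{-d/4}$. Combining, $\|A_\lambda f_l\|_\infty\le 2c^{1/2}(\tau^lt)^{-1}t^{-d/4}\|f_l\|_2$.

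Because $P^l_{\tau^lt}f_l=U^l_\lambda(A_\lambda f_l)$ agrees in $L^2$ with a continuous function on each side (the left-hand side by joint continuity of the heat kernel, the right-hand side by Proposition \ref{PR3}), the two continuous versions coincide pointwise, and I can apply Proposition \ref{PR3} to the bounded function $A_\lambda f_l$ to get
\[
\bigl|P^l_{\tau^lt}f_l(x)-P^l_{\tau^lt}f_l(y)\bigr|\le c_1(\tau^l+\lambda^{-1})\bigl(|x-y|\vee l^{-1}\bigr)^{\gamma}\|A_\lambda f_l\|_\infty .
\]
With $\lambda^{-1}=\tau^lt$ one has $\tau^l+\lambda^{-1}=\tau^l(1+t)\le2\tau^l$ for $t\le1$, and inserting the bound on $\|A_\lambda f_l\|_\infty$ the factors of $\tau^l$ cancel, leaving $Ct^{-(1+d/4)}(|x-y|\vee l^{-1})^{\gamma}\|f_l\|_2$. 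Since $t\le1$ this is bounded by $Ct^{-(1+d/2)}(|x-y|\vee l^{-1})^{\gamma}\|f_l\|_2$, which is the claim; all constants are $l$-independent because $c_1,\gamma$ come from Proposition \ref{PR3} and $c$ from Lemma \ref{hkbound}.

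The main obstacle is the second step: converting the merely $L^2$ datum $f_l$ into an $L^\infty$ input for the resolvent estimate while (i) keeping every constant uniform in $l$ and (ii) producing the precise cancellation of $\tau^l$. This is exactly where the on-diagonal heat-kernel estimate of Lemma \ref{hkbound}---itself the payoff of the level-adapted Poincar\'e inequality of Lemma \ref{thm:lem4.5}---is needed, and where the scale-matched choice $\lambda\asymp(\tau^lt)^{-1}$ is forced.
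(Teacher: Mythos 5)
Your argument is correct and follows essentially the same route as the paper: both realise $P^{l}_{\tau^l t}f_l=U^l_\lambda h$ for an auxiliary function $h$ defined by the spectral multiplier $(\lambda+\nu)e^{-\nu\tau^l t}$, bound $\|h\|_\infty$ using the on-diagonal estimate of Lemma \ref{hkbound}, and then apply Proposition \ref{PR3}. The only differences are cosmetic: the paper takes $\lambda=(\tau^l)^{-1}$ and controls $\|h\|_\infty$ by an $L^1$--$L^\infty$ duality argument, whereas you take $\lambda=(\tau^l t)^{-1}$ and factor through the $L^2\to L^\infty$ norm of $P^{l}_{\tau^l t/2}$, which in fact yields the marginally sharper exponent $t^{-(1+d/4)}$.
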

\begin{proof}
Write $\langle f,g \rangle$ for the inner product in $L^2(\mathrm{SG}(l), \mu^l)$. Note that in
what follows $t$ is fixed. As before, we write $f=f_l$. Define
\[h=\int_0^\infty \left((\tau^l)^{-1}+\nu\right)e^{-\nu \tau^l t}\, dE^{l}_\nu(f).\]
Since $\sup_\nu ((\tau^l)^{-1}+\nu)^2 e^{-2\nu \tau^l t}\leq 1+(\tau^l t)^{-2}=:c_{1,t}(l)$, then
\[\int_0^\infty \left((\tau^l)^{-1}+\nu\right)^2 e^{-2\nu\tau^l t} \, d\langle E^{l}_\nu(f), E^{l}_\nu(f)\rangle
\leq c_{1,t}(l)\int_0^\infty \, d\langle E^{(l)}_\nu(f), E^{(l)}_\nu(f)\rangle=c_{1,t}(l)\|f\|_2^2,\]
and so we see that $h$ is an element of $L^2(\mathrm{SG}(l), \mu^l)$.

Next, suppose $g\in L^1(\mathrm{SG}(l), \mu^l)$. Then $\|P^{l}_{\tau^l t} g\|_1\leq \|g\|_1$, and
\[\left|P^{l}_{\tau^l t}g(x)\right|=\left|\int p_{\tau^l t}^l(x,y)g(y)\, \mu_l(dy)\right|\leq c_{2,t}(l)\|g\|_1,\]
where
\[c_{2,t}(l):= \sup_{x,y\in\mathrm{SG}(l)} p_{\tau^l t}^{l}(x,y).\]
Note that this constant is finite by the continuity of the heat kernel. It follows that
\[\|P^{l}_{\tau^l t}g\|_2\leq \|P^{l}_{\tau^l t}g\|_\infty\leq c_{2,t}(l)\|g\|_1.\]
Using Cauchy-Schwarz and the fact that $\sup_\nu ((\tau^l)^{-1}+\nu) e^{-\nu \tau^l t}=c_{1,t}(l)^{1/2}<\infty,$ we have
\begin{align*}
\langle h,g \rangle&=\int_0^\infty \left((\tau^l)^{-1}+\nu\right)e^{-\nu \tau^l t}\, d\langle E^{l}_\nu(f), E^{l}_\nu(g)\rangle\\
&\leq \left(\int_0^\infty \left((\tau^l)^{-1}+\nu\right)^2e^{-\nu \tau^l t}\, d\langle E^{l}_\nu(f), E^{l}_\nu(f)\rangle\right)^{1/2}\times\left(\int_0^\infty e^{-\nu \tau^l t} \, d\langle E^{l}_\nu(g), E^{l}_\nu(g)\rangle\right)^{1/2}\\
&\leq  \left(c_{1,t/2}(l)\int_0^\infty  \, d\langle E^{l}_\nu(f), E^{l}_\nu(f)\rangle\right)^{1/2}\times\left(\int_0^\infty  e^{-\nu\tau^l t}  \, d\langle E^{l}_\nu(g), E^{l}_\nu(g)\rangle\right)^{1/2}\\
&=c_{1,t/2}(l)^{1/2} \|f\|_2\|P^{l}_{\tau^l t/2}g\|_2\\
&\leq {c}'_t(l) \|f\|_2 \|g\|_1,
\end{align*}
where ${c}'_t(l):=c_{1,t/2}^{1/2}c_{2,t/2}(l)$. By considering the supremum over $g\in L^1(\mathrm{SG}(l), \mu^l)$ with $L^1$-norm less than $1$, we obtain that $\|h\|_\infty\leq c_{t}'(l)\|f\|_2$.
Additionally, by \eqref{C07},
 \[U^l_{(\tau^l)^{-1}} h=
\int_0^\infty e^{-\nu \tau^l t} \, dE^{l}_\nu(f)=P^{l}_{\tau^lt}f,\]
and so the H\"older continuity of $P^{l}_{\tau^lt}f$ follows from Proposition \ref{PR3}. In particular, we have that the constant of the result arises from the following bound: for $t\in [2l^{-2},1]$,
\[2c_1\tau^l{c}'_t(l)\leq 2c_1\left(1+t^{-1}\right)\sup_{x,y\in\mathrm{SG}(l)} p_{\tau^l t/2}^{l}(x,y)\leq Ct^{-(1+d/2)},\]
where $c_1$ is constant of Proposition \ref{PR3} and we have applied Lemma \ref{hkbound} to deduce the final inequality.
\end{proof}

We are now in a position to prove Theorems \ref{PT6} and \ref{PT7}.

\begin{proof}[Proof of Theorems \ref{PT6} and \ref{PT7}]
Fix $y\in\mathrm{SG}(l)$ and let $f_l(z)=p_{\tau^lt/2}^l(z,y)$. Since
\[P^{l}_{\tau^lt/2}f_l(x)=\int p^l_{\tau^lt/2}(x,z) f_l(z)\, \mu^l(dz)=\int p^l_{\tau^lt/2}(x,z)p^l_{\tau^lt/2}(z,y)\, \mu^l(dz)
=p^l_{\tau^lt}(x,y),\]
we obtain from Proposition \ref{PR4} that
\begin{align*}
\left|p_{\tau^lt}^l(x,y)-p_{\tau^lt}^l(x',y)\right|&=\left|P^{l}_{\tau^lt/2}f_l(x)-P^{l}_{\tau^lt/2}f_l(x')\right|\\
&\leq Ct^{-(1+d/2)}\left(|x-x'|\vee l^{-1}\right)^{\gamma} \|f_l\|_2,\\
&=Ct^{-(1+d/2)}\left(|x-x'|\vee l^{-1}\right)^{\gamma} p_{\tau^lt}^l(y,y)^{1/2},
\end{align*}
for all $x,y\in\mathrm{SG}(l)$, $t\in[4l^{-2},1]$. By Lemma \ref{hkbound}, the term $p_{\tau^lt}^l(y,y)^{1/2}$ is bounded above by $Ct^{-d/4}$, and thus the result of Theorem \ref{PT7} follows for the range $t\in [4l^{-2},1]$. By simple modifications of the previous arguments, the constant 1 is readily replaced by an arbitrary constant $t_0$.

Finally, let $I=[t_1,t_0]$, where $0<t_1<t_0$. Assume that $l$ is large enough so that $4l^{-2}\leq t_1$. Then, from Theorem \ref{PT7},
\[\sup_{\substack{x,x',y,y' \in \mathrm{SG}(l):\\|x-x'|,|y-y'|\leq \delta}}\sup_{t\in I}\left|p^l_{\tau^l t}(x,y)-p^l_{\tau^l t}(x',y')\right|\leq
C\sup_{t\in I}t^{-(1+3d/4)}\left(\delta\vee l^{-1}\right)^{\gamma}.\]
Taking the limit as $l\rightarrow\infty$ and then $\delta\rightarrow0$ yields Theorem \ref{PT6}, as desired.
\end{proof}

\section{Other examples}\label{sec5}

\subsection{Triangles}

We have worked with the family $\mathrm{SG}(l)$, which has a part of the triangular lattice as the initial structure in each fractal. This could be modified to, for example, a sequence of triangles attached to the outer edge of the unit triangle, see Figure \ref{triangle}. This would have a mass of $3(n-1)$ triangles of side length $1/n$ at stage $n$. In this case, the fractal dimension of the limit would be 1 and we should see that the associated diffusions converge to a one-dimensional Brownian motion on the boundary of a unit triangle. (Actually, for this example, the limiting process is described by a resistance form, and so we could alternatively deduce this conclusion by applying the convergence result of \cite[Theorem 1.3]{CHK}. Moreover, the corresponding local limit theorem would follow by checking the equicontinuity of the heat kernel using the resistance-based approach of \cite[Section 4]{CrHPA}.) Other higher-dimensional analogues of the construction would also be possible. For instance, in three-dimensions, taking those first-stage tetrahedra touching the outer boundary of the initial one would yield a two-dimensional Brownian motion on the boundary of the tetrahedron. Or, taking only those first-stage tetrahedra intersecting the edges of the initial one would yield a one-dimensional Brownian motion on the edges of the tetrahedron. We believe that straightforward modifications to the arguments of this paper would yield the corresponding results in these cases.

\begin{figure}[t]
  \centering
  \includegraphics[width=0.5\textwidth]{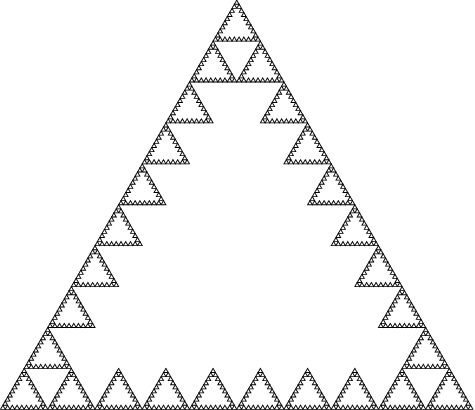}
  \caption{A variation on the $\mathrm{SG}(l)$ containing only the triangles touching the outer boundary of $\Delta$, shown at the second stage of construction for $l=10$.}\label{triangle}
\end{figure}

\subsection{Vicsek sets}

We discuss here how our analysis can also be transferred to a family of finitely ramified fractals based on the square and the cube. These we call Vicsek sets and label them VS$(l)$, where $l=2n+1$ is the number of divisions of the side, see Figure \ref{vsfig}. There is much that is the same as in the case of the Sierpinski gasket, but we need to be a little more careful in certain places.

\begin{figure}[t]
  \centering
  \includegraphics[width=0.3\textwidth]{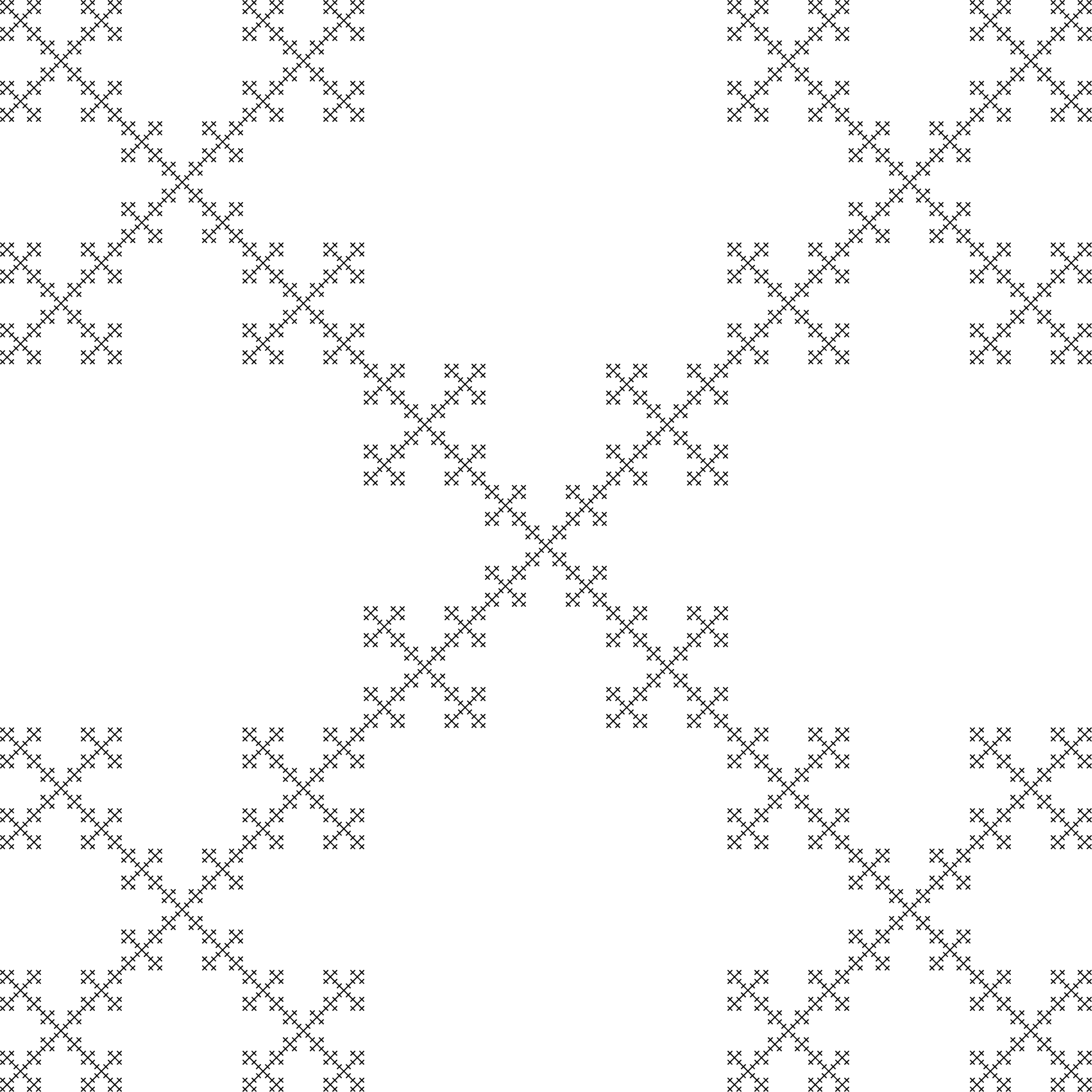}\hspace{5pt}\includegraphics[width=0.3\textwidth]{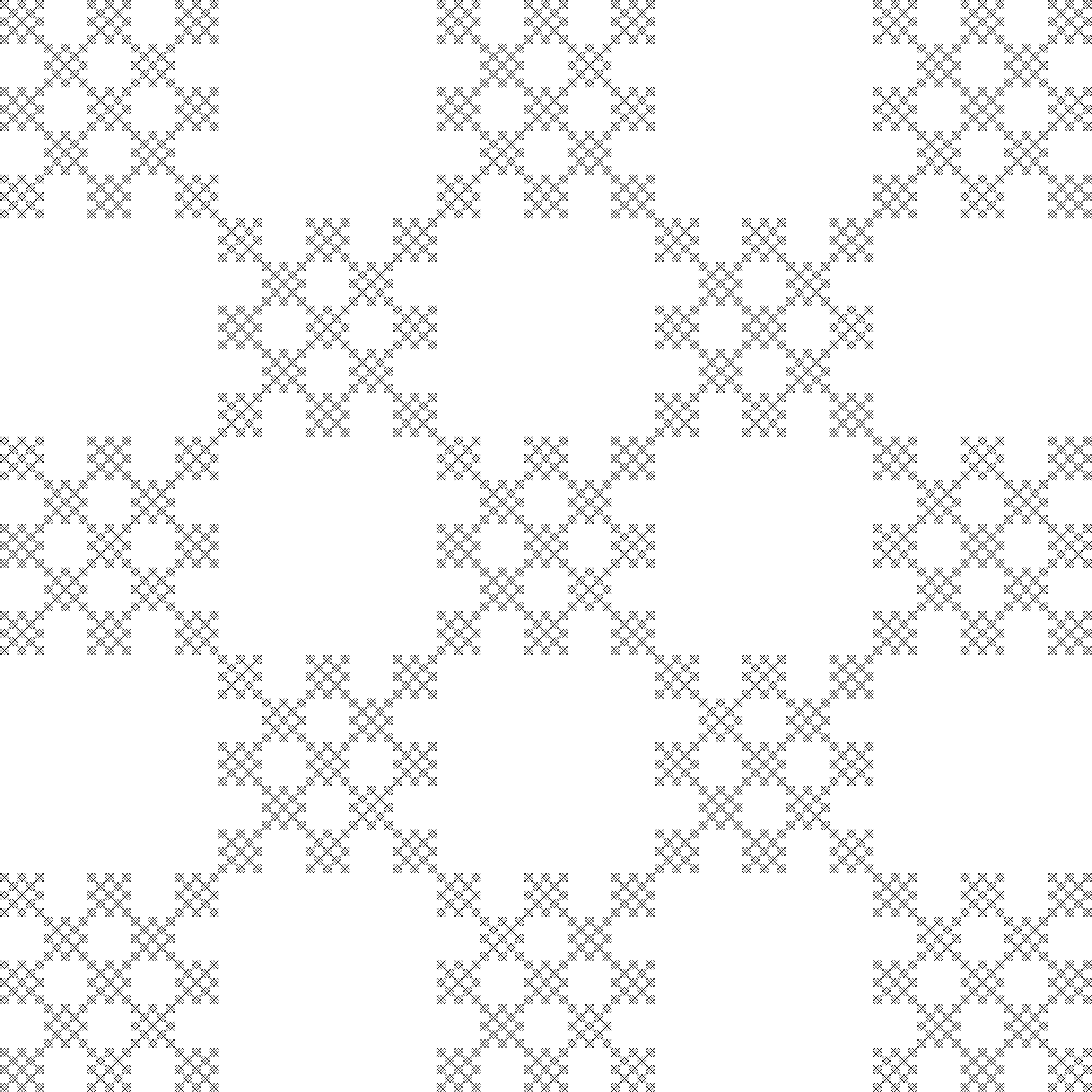}\hspace{5pt}\includegraphics[width=0.3\textwidth]{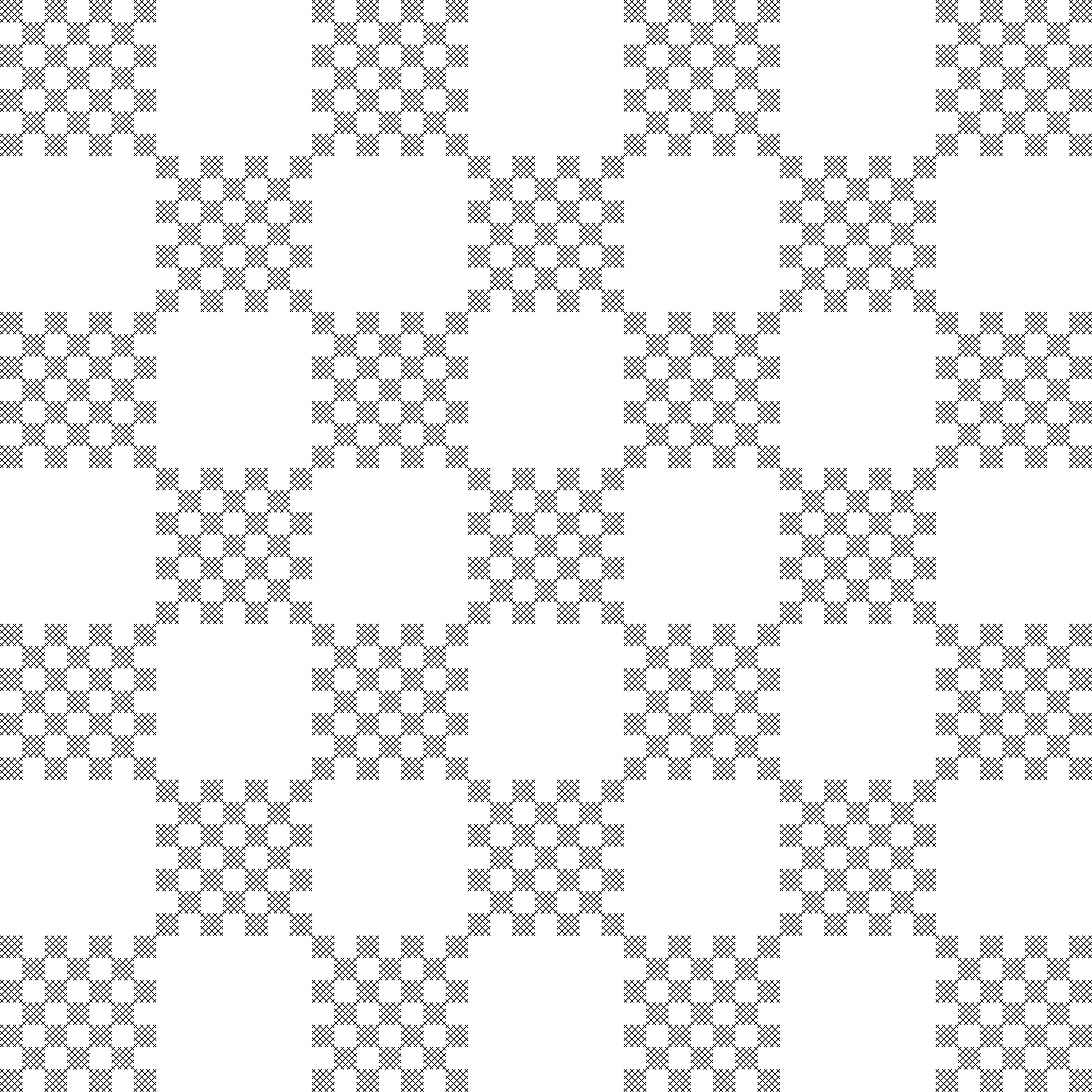}
  \caption{The sets $\mathrm{VS}(2)$, $\mathrm{VS}(3)$ and $\mathrm{VS}(4)$ in two dimensions, shown at the 5th, 4th and 3rd stages of construction, respectively.}\label{vsfig}
\end{figure}

We start by recalling the resistance estimates from \cite{HK}.

\begin{lemma}[{\cite[Section~4.1]{HK}}] If $d=2$, then $\rho_l \asymp \log l$. If $d\geq 3$, then $\rho_l \asymp 1$.
\end{lemma}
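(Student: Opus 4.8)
The plan is to follow the template developed for $\mathrm{SG}(l)$ in Lemmas \ref{hklem}, \ref{rhobound} and \ref{diambound}, reducing the estimate on $\rho_l$ to an effective-resistance computation on a finite piece of the square lattice (when $d=2$) or cubic lattice (when $d\geq3$), and then appealing to the classical recurrence/transience dichotomy for such lattices. Writing $R_{V_1^l}$ for the effective resistance on the level-$1$ vertex graph of $\mathrm{VS}(l)$, equipped with unit resistors along its nearest-neighbour edges, the first step is to use the self-similar scaling \eqref{edef} together with the compatibility relation \eqref{consist}, exactly as in the derivation of $R_{V_1^l}(0,x)=\tfrac{2\rho_l}{d+1}$ in the proof of Lemma \ref{rhobound}, to express $\rho_l$ as a fixed dimensional multiple of $R_{V_1^l}(x,y)$ for boundary vertices $x,y\in V_0$. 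This reduces the claim to showing $R_{V_1^l}\asymp\log l$ for $d=2$ and $R_{V_1^l}\asymp1$ for $d\geq3$, with implicit constants uniform in $l$.

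For the lower bound I would short together the vertices of $V_1^l$ lying at each fixed graph distance from a boundary vertex and apply the resulting Nash--Williams cutset estimate, as in Lemma \ref{rhobound}. Since the cutset separating distance $i$ from distance $i+1$ has cardinality of order $i^{d-1}$ (the analogue of the quantity $dN_{d-1,i}$ appearing there, now for the square or cubic shells), this produces $R_{V_1^l}\geq c\sum_{i=1}^{\lfloor l/2\rfloor}i^{-(d-1)}$, which is of order $\log l$ when $d=2$ and bounded below by a positive constant when $d\geq3$.

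The matching upper bound is the point at which the Vicsek geometry requires the additional care alluded to above. Here one must verify that $\mathrm{VS}(l)$ retains enough genuinely $d$-dimensional connectivity that its level-$1$ resistance does not exceed the lattice order; were the retained cells to form only a one-dimensional chain, the resistance would instead be of order $l$. The plan is to exhibit a unit flow between the boundary vertices supported on the retained cells whose energy is of the correct order, or equivalently to construct a comparable family of nearest-neighbour paths through those cells and bound the resulting resistance by the method of \cite[Chapter 9]{LPW}; this reproduces the bounds $R_{V_1^l}\leq C\log l$ for $d=2$ and $R_{V_1^l}\leq C_d$ for $d\geq3$, mirroring the triangular-lattice estimate invoked in the proof of Lemma \ref{diambound}. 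Combining the two bounds and feeding the result back through the reduction yields the stated behaviour of $\rho_l$.

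The main obstacle is precisely this upper bound. Unlike $\mathrm{SG}(l)$, whose level-$1$ graph is a full triangular-lattice block, $\mathrm{VS}(l)$ discards many cells, so the key is to rule out the creation of a true bottleneck and to produce, uniformly in $l$, a flow or path system adapted to the specific retained-cell pattern that realises the lattice-order energy. The finite ramification of $\mathrm{VS}(l)$ enters here as well: because distinct cells meet only at vertices of $V_1^l$, one must check that the graph resistance $R_{V_1^l}$ faithfully encodes $\rho_l$, so that the lattice comparison can be transferred back to the resistance form without loss.
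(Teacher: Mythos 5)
First, a point of comparison: the paper does not actually prove this lemma --- it is imported verbatim from \cite[Section~4.1]{HK} --- so the only in-paper arguments to measure your proposal against are Lemma \ref{rhobound} (the analogous lower bound for $\mathrm{SG}(l)$) and Lemma \ref{vsrholem} (the bound $\inf_l\rho_l>1$ for the three-dimensional Vicsek sets). Your overall strategy --- express $\rho_l$ as a fixed multiple of a level-one effective resistance, lower-bound that by Nash--Williams shorting of shells, upper-bound it by a flow --- is the right shape and is how those lemmas proceed. But there is a genuine gap in your reduction step. For $\mathrm{VS}(l)$ the level-one network is \emph{not} a piece of the square or cubic lattice with unit resistors on nearest-neighbour edges: it is a checkerboard of retained cells glued only at corners, and each cell carries the \emph{fixed-point} conductances of the renormalization map, so that (after normalising the edge resistance to $1$) the face- and body-diagonal resistances $a,b$ are unknown, $l$-dependent quantities. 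Your shorting lower bound requires an upper bound on the total conductance of each shell, i.e.\ a lower bound on $a$ and $b$; this is precisely the input $b\geq a\geq 1$ that the proof of Lemma \ref{vsrholem} extracts from the ordering $p_1\geq p_2\geq p_3$ of the fixed point in $\mathcal{P}$, and which your argument never secures. Without it, the constant in $R_{V_1^l}\geq c\sum_i i^{-(d-1)}$ could degenerate with $l$. (For $\mathrm{SG}(l)$ this issue is invisible because all vertex pairs of the simplex are equivalent by symmetry, so the fixed point is a single number and unit resistors can be assumed.)

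Second, the upper bound --- the half of the statement that carries the real content of $\rho_l\asymp 1$ for $d\geq3$ and $\rho_l\leq C\log l$ for $d=2$, and which this paper nowhere establishes --- is left entirely as a plan. For the checkerboard geometry the retained cells do form a rotated (respectively BCC-type) copy of a $d$-dimensional lattice of side $\asymp l$, each coarse bond being a single-cell traversal of resistance at most a dimensional constant using only the unit-resistance side edges; routing a standard lattice flow through this coarse graph then gives $R_{V_1^l}\leq C\log l$ for $d=2$ and $R_{V_1^l}\leq C_d$ for $d\geq 3$. That identification of the retained-cell adjacency structure, the boundedness of each coarse bond, and the energy computation for the flow are exactly what needs to be written down, and none of it appears beyond the assertion that it should be done. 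Finally, even granting both bounds on $R_{V_1^l}$, the passage back to $\rho_l$ divides by the single-cell resistance $R_{V_0}(0,x)$, which again depends on $a,b$; one must check it is bounded above and below uniformly in $l$, as is done via the explicit formula and the function $\eta(a,b)$ in the proof of Lemma \ref{vsrholem}.
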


\subsubsection{Two-dimensional case}

The construction is to take a unit square and divide it into a $l \times l$ grid. We then construct a checkerboard of squares of side $1/l$. Now remove all the squares which are connected diagonally, but not to one of the outer corners. Repeat this process within each remaining square to build a fractal with fractal dimension $\log((l^2+1)/2)/\log(l)$. We see that as $l\to\infty$, the fractal dimension converges to 2 as expected.

The first step is to show the existence of diffusions on the fractals in the sequence. For the case of the square, this is a consequence of the fact that it is a family of nested fractals. By the argument of Lindstrom \cite{Lin}, for each $n$, there exists a fixed point $w_l$ such that the network with conductance $1$ on the horizontal and vertical edges and $w_l$ on the diagonal edges, will satisfy the fixed point problem for the Dirichlet form
\[ \mathcal{E}^{(0)}_l(f,f) = \sum_{i=1}^{(l^2+1)/2} \rho_l \mathcal{E}^{(0)}_{l,i} (f \circ \phi_i, f \circ \phi_i). \]
(The graph upon which the Dirichlet form $\mathcal{E}^{(0)}_l$ is based is shown in Figure \ref{vgfig}.) From this fixed point, we can construct a sequence of Dirichlet forms $(\mathcal{E}_l, \mathcal{F}_l)$ each defined on VS$(l)$. For each $l$, the corresponding Hunt process $(X_t^l)_{t\geq 0}$ is the Brownian motion on the VS$(l)$. We now let $l\to\infty$. It is shown in \cite{HK} that
\[ \rho_l \asymp \log{l}. \]
This makes it straightforward to follow the same steps as for the Sierpinski gasket case to show that there will be a sequence of scaling parameters $\tau^l$ such that as $l\to\infty$
\[ (X^l_{\tau^lt})_{t\geq 0} \to (B_t)_{t\geq 0},\]
where $B$ is reflected Brownian motion on the unit square.

\begin{figure}[t]
  \centering
  \includegraphics[width=0.2\textwidth]{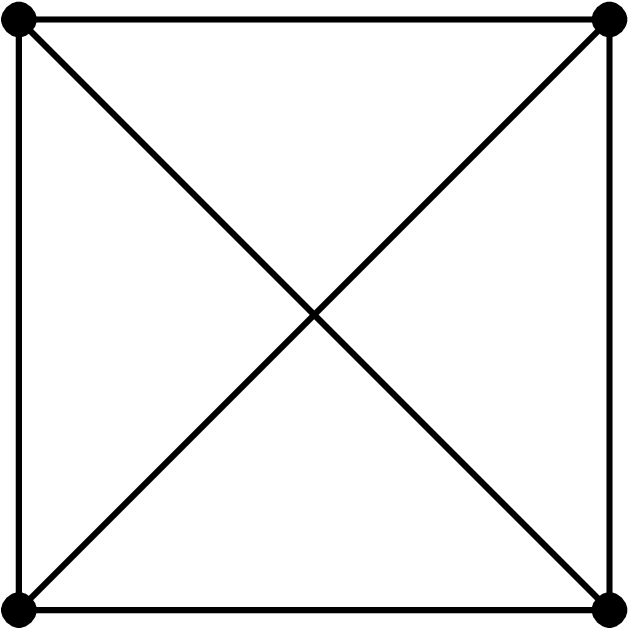}
  \caption{The graph upon which $\mathcal{E}^{(0)}_l$ is built for the two-dimensional Vicsek set.}\label{vgfig}
\end{figure}

\subsubsection{Three-dimensional case}

We consider the case of cubes where we break an initial cube up into a collection of $(2n+1)^3$ cubes of side length $1/(2n+1)$. The cubes that are connected diagonally, but not to an outer corner, are removed. The dimension of the resulting fractal is easily seen to be $\log((n+1)^3+n^3)/\log(2n+1)$, which converges to $3$ as expected.

The first issue is the existence of a Brownian motion on these sets as they are not nested fractals in dimension 3 (or indeed higher dimensions), see the lecture notes \cite{Barlow} for a discussion of the class of nested fractals. However, the structure of these sets is such that we can apply the same arguments as used by \cite{Lin} to show existence.

Let $\mathcal{P} = \{(p_1,p_2,p_3): p_1\geq p_2\geq p_3\geq 0, 3p_1+3p_2+p_3=1\}$
be the set of probabilities for a random walk on the unit cube where $p_1$ is the probability of crossing an edge, $p_2$ the probability of crossing a diagonal on a face and $p_3$ is the probability of crossing the main diagonal across the cube. Note that these probabilities are proportional to the conductivities $(c_1,c_2,c_3)$ between the vertices. That is, if we fix the edge conductivity $c_1=1$, then for the diagonal across a face $c_2=p_2/p_1$ and, for the main diagonal across the cube, $c_3=p_3/p_1$.

\begin{propn}\label{renorm}
For each $l$, there is a fixed point for the renormalization map in the set $\mathcal{P}$. (In particular, one can construct a sequence of consistent resistance metrics on the different levels of the fractal, analogous to \eqref{consist}.)
\end{propn}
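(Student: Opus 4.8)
The plan is to present the renormalization map concretely as a self-map of $\mathcal{P}$ built from effective conductances, and then invoke a topological fixed-point theorem. Given $(p_1,p_2,p_3)\in\mathcal{P}$, I would place on each retained sub-cube of the first-level approximation of $\mathrm{VS}(l)$ the conductances $(1,p_2/p_1,p_3/p_1)$ on its edges, face diagonals and main diagonal, respectively, and let $(C_1,C_2,C_3)$ be the effective conductances of this level-one network between the eight corners of the unit cube, for pairs of edge-, face-diagonal- and main-diagonal-type. The full isometry group of the cube preserves the collection of retained sub-cubes, so the effective conductance between two corners depends only on the type of the pair; this is what guarantees that the decimated network is again described by just three parameters. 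Renormalising by the total crossing weight, I define
\[R(p_1,p_2,p_3):=\frac{1}{3C_1+3C_2+C_3}\,(C_1,C_2,C_3),\]
so that a fixed point of $R$ is exactly a solution of the fixed-point equation for the first-level form, as in \eqref{edef}; the associated scaling factor $\rho_l$ is then recovered as the proportionality constant between $(C_1,C_2,C_3)$ and the input conductances at the fixed point.

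Next I would set up the fixed-point argument. The set $\mathcal{P}$ is the intersection of the hyperplane $\{3p_1+3p_2+p_3=1\}$ with the half-spaces $p_1\ge p_2\ge p_3\ge 0$, hence a compact, convex, two-dimensional polytope. Since $p_1>0$ throughout $\mathcal{P}$, every sub-cube is internally connected through its edges, and as the retained sub-cubes of $\mathrm{VS}(l)$ are glued along shared corners the whole level-one network is connected for every $p\in\mathcal{P}$; consequently the effective conductances are positive, finite, and rational in the input conductances, so $R$ is continuous on all of $\mathcal{P}$. Granting the self-map property $R(\mathcal{P})\subseteq\mathcal{P}$, Brouwer's fixed-point theorem then produces a fixed point $p^\star\in\mathcal{P}$. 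Because connectivity holds for any $p\in\mathcal{P}$, the form determined by $p^\star$ is a genuine, irreducible Dirichlet form, and the fixed-point identity is precisely the self-similar scaling \eqref{edef}; iterating it yields the compatible family $(\mathcal{E}^l_i)_{i\ge 0}$ whose effective resistances satisfy the consistency relation \eqref{consist}, which establishes the parenthetical claim.

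The crux, and the step I expect to be the main obstacle, is verifying the self-map property, namely that the input ordering $c_1\ge c_2\ge c_3$ is transmitted to the output as $C_1\ge C_2\ge C_3$ (the nonnegativity $C_3\ge 0$ being automatic). In resistance terms this amounts to showing that effective resistance is monotone in the Hamming distance of the corner pair, edge $\le$ face-diagonal $\le$ main-diagonal, and my approach would be a symmetrization/Rayleigh-monotonicity argument: exploiting the reflection and rotation symmetries of the cube to short together symmetric vertices — which can only decrease effective resistance — and then comparing the reduced networks for the three pair-types, paralleling the construction of Lindstrom \cite{Lin}. Here the finitely ramified structure of $\mathrm{VS}(l)$ is an advantage, since the sub-cubes meet only at corners, so the computation of $(C_1,C_2,C_3)$ reduces to composing, in series and parallel, the single-cube effective conductances along the corner-contact skeleton; I would use this reduction to carry out the comparison for each $l$. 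Should a direct symmetrization prove delicate for large $l$, an alternative is to verify the inequalities on the finite corner-contact network by a monotone-coupling argument for the associated random walk, in either case leaning on the cube symmetry that makes the three conductance types the only invariants.
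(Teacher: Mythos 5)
Your overall architecture matches the paper's: both proofs realise the renormalization as a continuous self-map of the compact convex simplex $\mathcal{P}$ and invoke Brouwer, and both correctly locate the entire difficulty in the self-map property, i.e.\ that the ordering of the three parameters (edge $\geq$ face diagonal $\geq$ main diagonal) is preserved by decimation. The preliminary points you make — that the cube's symmetry group acts transitively on each pair-type so the traced network is again described by three parameters, that $p_1\geq 1/7>0$ on all of $\mathcal{P}$ so the level-one network stays connected and the map is continuous, and that Markovianity of the trace gives $C_3\geq 0$ — are all sound and consistent with the paper (which phrases the map via the hitting distribution $f_i(p)=\mathbb{P}^0(X^{l,1}_{T_1}=A_i)$ rather than via effective conductances, an inessential difference).

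The gap is in your proposed resolution of the crux. Your primary tool — shorting symmetric vertices and appealing to Rayleigh monotonicity — gives only one-sided bounds (shorting can only increase effective conductance, cutting can only decrease it), and what is needed is a \emph{two-sided comparison between effective conductances of the same network across different vertex pairs}; there is no reason the slack in a short/cut bound for the pair $(0,A_1)$ should be comparable to that for $(0,A_2)$, and I do not see how this line closes. The paper's argument (following Lindstrom) is a reflection-of-paths argument: any path of the walk from $0$ that first exits $V_0\setminus\{0\}$ at the face-diagonal vertex $A_2$ must cross the hyperplane bisecting $A_1$ and $A_2$ (which is a symmetry plane of the level-one network but does \emph{not} contain $0$); reflecting the portion of the path after the first crossing produces a path ending at the edge vertex $A_1$ with at least the same probability weight, and since there are additional paths to $A_1$ that never cross the bisector, $f_1(p)>f_2(p)$. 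The same reflection in the plane bisecting $A_2$ and $A_3$ gives $f_2(p)>f_3(p)$. Your mooted alternative — a ``monotone-coupling argument for the associated random walk'' — is the right instinct and is essentially this reflection coupling, but as stated it is a placeholder rather than a proof: the choice of the bisecting hyperplane, the fact that it is a symmetry of the retained-sub-cube configuration, and the first-crossing decomposition are precisely the content of the step, and they need to be written out.
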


\begin{proof}
This follows the same argument as \cite{Lin}. We show that the map on transition probabilities induced by considering the trace of the random walk on $V_1^l$ onto the vertex set $V_0$ is a continuous map from $\mathcal{P}$ to $\mathcal{P}$ and hence, by Brouwer's fixed point theorem,
the fixed point exists. (The map on the probabilities is clearly continuous in $p$.)

Let $X^{l,1}$ denote the Markov chain on $V^l_1$ moving according to the transition probabilities given by an element of $\mathcal{P}$. The renormalization map is equivalent to the map on transition probabilities given by $f_i(p) = \mathbb{P}^0(X^{l,1}_{T_1}=A_i)$, where $T_1 = \inf\{n\geq 0: X^{l,1}\in V_0\backslash\{0\}\}$ and the vertices $A_i$ are as in Figure~\ref{fig:cube}. Now, by considering the paths the random walk can take between the vertices and using the reflection symmetry, it is possible to check that, if the vertex is further away from 0, the probability of reaching it first will be smaller. To see this for the edge and the diagonal across a face, we consider the bisector of the face. To move from 0 to $A_2$, we must cross the bisector, shown as the dotted line in the figure, so taking a reflection in this plane we can map any path from 0 to $A_2$ to a path from 0 to $A_1$. Hence, not only are there are more paths in $V^l_1$ to $A_1$, those that hit the bisector have a higher probability than the corresponding reflected path that goes from $0$ to $A_2$. Thus the probability $f_1(p)$ of arriving at $A_1$ must be greater then the probability $f_2(p)$ of arriving at $A_2$. Similarly, for the long diagonal, we can take the plane bisecting $A_2$ and $A_3$, as indicated on the figure by the dashed line, and any path on $V^l_1$ from 0 to $A_3$ can be reflected into a path from 0 to $A_2$, which therefore has a higher probability of being reached, i.e.\ $f_3(p)<f_2(p)$.
\end{proof}

\begin{figure}
\centering
\includegraphics[width=0.3\textwidth]{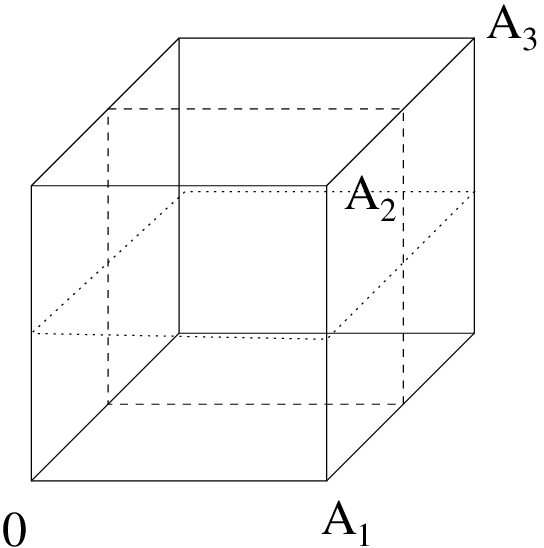}
\caption{The cube with corners labelled as in the proof of Proposition \ref{renorm}.}\label{fig:cube}
\end{figure}

Given Proposition \ref{renorm}, we have the existence of the processes exactly as for nested fractals. The next task is to prove that the resistance scale factor $\rho_l$ is strictly larger than one.

\begin{lemma} \label{vsrholem} For $d=3$,
\[\inf_{l\geq 2}\rho_l>1.\]
\end{lemma}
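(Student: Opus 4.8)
The plan is to mirror the argument for the Sierpinski gasket in Lemma \ref{rhobound}, reducing the claim to a lower bound on an effective resistance across the level-one network and exploiting the constraints on the fixed-point conductances supplied by Proposition \ref{renorm}. First I would translate the statement into one about resistances. By the renormalization/consistency relation (the analogue of \eqref{consist} furnished by Proposition \ref{renorm}), tracing onto $V_0$ the level-one network built by placing the fixed-point conductances $(1,c_2,c_3)$ on each of the retained small cells reproduces the single-cell network up to the factor $\rho_l$. Consequently, for any pair of corners $x,y\in V_0$,
\[ \rho_l=\frac{R_{V_1^l}(x,y)}{R_{\mathrm{cell}}(x,y)}, \]
where $R_{V_1^l}$ and $R_{\mathrm{cell}}$ denote the effective resistances computed with the \emph{common} conductances $(1,c_2,c_3)$ on, respectively, the level-one network and a single cube. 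Since we already know $\rho_l>1$ for each fixed $l$, it suffices to prove $\liminf_{l\to\infty}\rho_l>1$, and for this I would take $x=0$ and $y=\bar 0$ the opposite corner of the cube.

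For the lower bound on $R_{V_1^l}(0,\bar 0)$ I would follow the shorting strategy of Lemma \ref{rhobound}. Shorting together all vertices of $V_1^l$ at each fixed graph distance from $0$ produces a series network, so that
\[ R_{V_1^l}(0,\bar 0)\geq \sum_{i} \frac{1}{\kappa_i}, \]
where $\kappa_i$ is the total conductance of the edges of $V_1^l$ crossing between consecutive shells. The key simplification is that, by Proposition \ref{renorm}, the fixed-point conductances satisfy $0\le c_3\le c_2\le 1$, so each is at most the edge conductance $1$; hence $\kappa_i\le m_i$, the number of crossing edges, and the resulting bound $\sum_i m_i^{-1}$ is \emph{independent of $c_2,c_3$ and of $l$}. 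Because the shells near a corner grow like $m_i\asymp i^{\,d-1}=i^2$, the two ends of the diagonal each contribute a convergent series while the bulk contributes a vanishing amount, so $\liminf_{l\to\infty}R_{V_1^l}(0,\bar 0)$ is bounded below by an explicit positive constant. On the denominator side, deleting the face- and body-diagonal conductances only increases resistance, giving the $l$-independent upper bound $R_{\mathrm{cell}}(0,\bar 0)\le R^{(1,0,0)}_{\mathrm{cube}}(0,\bar 0)=\tfrac56$, the antipodal resistance of the unit-conductance cube graph. Dividing the two estimates yields $\liminf_{l\to\infty}\rho_l\ge c$ for an explicit $c$, and combined with $\rho_l>1$ for each fixed $l$ this gives $\inf_{l\ge2}\rho_l>1$.

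The hard part will be twofold. First, the geometry of $\mathrm{VS}(l)$ is more delicate than that of $\mathrm{SG}(l)$: the retained cells meet only at single points, the kept sub-cubes split into two interpenetrating families, and for $l\ge 5$ genuine cycles appear in the cell-adjacency graph. I would therefore need to set up the shells (most naturally by the summed coordinate) and count the crossing edges $m_i$ carefully enough to justify $m_i\le Ci^2$ near each corner. Second, and most importantly, unlike the comfortable margin available for $\mathrm{SG}(l)$, the decoupled constant $\bigl(\sum_i m_i^{-1}\bigr)\big/R^{(1,0,0)}_{\mathrm{cube}}(0,\bar 0)$ must be checked to be strictly greater than $1$. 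If the crude bound $\kappa_i\le m_i$ proves too lossy, I would instead treat the first few shells explicitly—where the connections are sparse and directly computable, as in the $l=3$ star configuration giving $\rho_3=3$—and invoke monotonicity only beyond them, trading a sharper small-scale computation for the uniform tail estimate.
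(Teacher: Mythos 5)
Your skeleton matches the paper's: identify $\rho_l$ as the ratio of the level-one corner-to-corner resistance to the single-cube resistance, lower-bound the former by shorting shells around the two ends of the main diagonal, and upper-bound the latter. The genuine gap is in your decoupling step. You bound the numerator below by $\sum_i m_i^{-1}$, which amounts to taking the worst case $c_2=c_3=1$ for the shell conductances, while you bound the denominator above by $R^{(1,0,0)}_{\mathrm{cube}}(0,\bar 0)=5/6$, which is the opposite worst case $c_2=c_3=0$. These two extremes are incompatible, and the resulting constant is \emph{not} greater than $1$: a shell at graph distance $i$ from a corner is crossed by roughly $(i+1)^3-i^3\approx 3i^2$ cells, each contributing up to $4+8c_2+4c_3\le 16$ units of conductance, so $\sum_i m_i^{-1}$ is of order a few tenths at most, against $5/6\approx 0.83$. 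Concretely, at the matched choice $a=b=1$ the shorting bound gives a numerator of about $0.33$ and the exact cell resistance is $1/4$, yielding a ratio $\approx 1.3$; your decoupled denominator $5/6$ turns that same numerator into a ratio of about $0.39$. Your fallback of computing the first few shells explicitly does not repair this unless those shells are computed with the \emph{same} conductances as the denominator, at which point you are doing the coupled computation anyway.

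The paper's proof keeps the fixed-point resistances $a=1/c_2$, $b=1/c_3$ in both estimates: the shorted shell conductance is $((i+1)^3-i^3-1)(4+8/a+4/b)+1+3/a+3/b$, and the single-cube resistance is computed exactly as $R_{V_0}(0,A_3)=\frac12\frac{b(5ab+2a+b)}{(a+2b+ab)(3b+1)}$. The point is that the $(a,b)$-dependence then cancels in the ratio: the quantity $\eta(a,b)=\left(2R_{V_0}(0,A_3)(1+2/a+1/b)\right)^{-1}$ varies only between $1/2$ and $3/5$ over the admissible range $b\ge a\ge 1$ supplied by the fixed-point argument, and multiplying by the explicit shell sum $\frac{2\pi}{\sqrt3}\tanh\left(\frac{\pi}{2\sqrt3}\right)\approx 2.61$ gives $\liminf_l\rho_l\ge 1.3>1$. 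You should restructure your estimate so that the conductance dependence is carried through both the numerator and the denominator and only the ratio is bounded, rather than bounding each side separately at opposite corners of the parameter set.
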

\begin{proof} We follow an approach similar to the Sierpinski gasket case. For each $l$, suppose $V_1^l$ is the vertex set of a graph, equipped with all edges within each 0-cell. Moreover, suppose $(R_{V_1^l}(x,y))_{x,y\in V_1^l}$ is the effective resistance on $V_1^l$ when edges are equipped with the fixed point resistors, and define $R_{V_0}$ similarly on $V_0$. To fix the normalisation, we consider the case when the fixed point resistances on the `edges' (as opposed to the `diagonals') are given by 1. As in the proof of Lemma \ref{rhobound}, we will short all the vertices at a fixed graph distance from 0.

As the resistance scale factor will be independent of which edge is considered, we focus on the main diagonal. Using the labelling from Figure~\ref{fig:cube}, we note that
\[ R_{V_1^l}(0,A_3) = \rho_l R_{V_0}(0,A_3). \]

Firstly, write $a$ for the resistance along the diagonal across a face and $b$ for the resistance along the long diagonal across the cube. (Recall that the resistance along an edge is equal to 1.) By a standard calculation we get
\[ R_{V_0}(0,A_3)= \frac{1}{2}\frac{b(5ab+2a+b)}{(a+2b+ab)(3b+1)}. \]

To prove the result, we need a good lower bound on $R_{V_1^l}(0,A_3)$. Suppose $B^l_0$ and $B^l_1$ are the vertices in $V_1^l$ at graph distance $\lfloor l/2 \rfloor$ from 0 and $A_3$, respectively. (Similar to Figure \ref{vl1fig} for the Sierpinski gasket.) By Rayleigh's monotonicity principle and the parallel law,
\[ R_{V_1^l}(0,A_3)\geq R_{V_1^l}\left(0,B^l_0\right)+R_{V_1^l}\left(B_1^l,A_3\right)=2 R_{V_1^l}\left(0,B_0^l\right).\]
By shorting vertices at each fixed graph distance from 0, we have
\begin{eqnarray*}
R_{V_1^l}\left(0,B_0^l\right)&\geq & \sum_{i=1}^{\lfloor (l-3)/4 \rfloor}\frac{2}{((i+1)^3-i^3-1)(4+8/a+4/b)+1+3/a+3/b}\\
&\geq & \frac{1}{2(1+2/a+1/b)}
\sum_{i=1}^{\lfloor (l-3)/4 \rfloor} \frac{1}{(i+1)^3-i^3} \\
&\to & \frac{1}{2(1+2/a+1/b)} \frac{\pi}{\sqrt{3}}\tanh\left(\frac{\pi}{2\sqrt{3}}\right), \;\; l\to\infty.
\end{eqnarray*}
Writing
\[ \eta(a,b) = \frac{1}{2R_{V_0}(0,A_3)(1+2/a+1/b)}, \]
we have
\[ \liminf_{l\to\infty} \rho_l \geq
\frac{2\pi}{\sqrt{3}}\tanh\left(\frac{\pi}{2\sqrt{3}}\right) \eta(a,b). \]
Note that, for any $a,b\geq 1$, it holds that $1/2 =\eta(1,1) \leq \eta(a,b) \leq \lim_{a',b'\to\infty}\eta(a',b') = 3/5$ and it is also the case that $\frac{\pi}{\sqrt{3}}\tanh\left(\frac{\pi}{2\sqrt{3}}\right)\approx 1.305.$ Combining the above observations, as  $b\geq a \geq 1$ from the fixed point argument, we deduce that
\[\liminf_{l\rightarrow\infty}\rho_l>1.\]
Since $\rho_l>1$ for each fixed $l$, this completes the proof.
\end{proof}

Applying Lemma \ref{vsrholem}, the following is established in exactly the same way as for Lemma \ref{diambound}.

\begin{lemma} For $d=2,3$,
\[\sup_{l\geq 2}\sup_{x,y\in \mathrm{VS}(l)}R^l(x,y)<\infty.\]
\end{lemma}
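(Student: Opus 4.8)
The plan is to run the proof of Lemma \ref{diambound} essentially verbatim, with the tetrahedron $\Delta$ replaced by the unit square (for $d=2$) or cube (for $d=3$), which I denote $Q$, and the similitudes $\psi_j$ replaced by the level-one maps $\phi_i$ of the Vicsek construction. First I would fix the cell decomposition: for $i\in\mathbb{N}$ the sets $\phi_{j_1}\circ\cdots\circ\phi_{j_i}(Q)$ are the $i$-cells, and for each $x\in\mathrm{VS}(l)$ there is a sequence $(x_i)_{i\geq0}$ with $x_i\in V_i^l$ (where $V_0^l:=V_0$) such that $x,x_i,x_{i+1}$ all lie in a common $i$-cell. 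Since $R^l$ is topologically equivalent to the Euclidean metric on $\mathrm{VS}(l)$ (which holds here exactly as in the gasket case), we have $R^l(x_i,x)\to0$, so $R^l(x,y)=\lim_{i\to\infty}R^l(x_i,y_i)$ for any second point $y$ with approximating sequence $(y_i)_{i\geq0}$.

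Next I would apply the triangle inequality along the telescoping chains $x_0,x_1,\ldots$ and $y_0,y_1,\ldots$, together with the self-similarity of the resistance form inherited from \eqref{edef} (which is built into the Vicsek forms by construction, via Proposition \ref{renorm}), to obtain the contraction $R^l(\phi_{j_1}\circ\cdots\circ\phi_{j_m}(x'),\phi_{j_1}\circ\cdots\circ\phi_{j_m}(y'))\leq\rho_l^{-m}R^l(x',y')$ and hence
\[
\sup_{x,y\in\mathrm{VS}(l)}R^l(x,y)\leq 1+2(1-\rho_l^{-1})^{-1}\sup_{x',y'\in V_1^l}R^l(x',y').
\]
By Lemma \ref{vsrholem} (for $d=3$) and the estimate $\rho_l\asymp\log l$ recalled at the start of this subsection (for $d=2$), we have $\inf_{l\geq2}\rho_l>1$, so the prefactor $(1-\rho_l^{-1})^{-1}$ is bounded uniformly in $l$.

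It then remains to bound the one-scale resistance diameter $\sup_{x',y'\in V_1^l}R^l(x',y')$ uniformly in $l$. As in Lemma \ref{diambound}, this quantity is a constant multiple of $\rho_l^{-1}$ times the effective resistance diameter of the level-one graph $V_1^l$ equipped with its fixed-point conductances (unit conductance on edges, and the fixed-point values on the diagonals). I would bound the latter by Rayleigh monotonicity and the parallel law, comparing $V_1^l$ to the corresponding portion of the square (resp.\ cubic) lattice of side $l$, exactly as the gasket diameter was estimated through \cite[Proposition 9.16 and Exercise 9.1]{LPW}; this yields a diameter of order $\log l$ when $d=2$ and of order a constant when $d=3$. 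Multiplying by $\rho_l^{-1}$ and invoking $\rho_l\asymp\log l$ ($d=2$) and $\rho_l\asymp1$ ($d=3$) makes these $l$-dependencies cancel, leaving $\sup_{x',y'\in V_1^l}R^l(x',y')$ bounded uniformly in $l$, which completes the argument.

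The hard part will be this last step, namely controlling the one-scale resistance diameter of the Vicsek graph. Unlike the triangular lattice, $V_1^l$ carries diagonal edges whose conductances are defined only implicitly through the fixed-point equation, so the Rayleigh comparison with a standard lattice requires knowing that these conductances stay uniformly comparable to constants as $l\to\infty$ and that the retained (checkerboard) connectivity does not create effective paths longer than those in the full lattice. Fortunately, the quantitative control already extracted in the proof of Lemma \ref{vsrholem}, in particular $b\geq a\geq1$ and $\eta(a,b)\in[\tfrac12,\tfrac35]$ in the three-dimensional case, provides exactly the bounds on the fixed-point conductances needed to make the comparison uniform in $l$.
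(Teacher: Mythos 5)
Your proposal is correct and is exactly the paper's approach: the paper's entire ``proof'' of this lemma is the one-line remark that it ``is established in exactly the same way as for Lemma \ref{diambound}'' once Lemma \ref{vsrholem} supplies $\inf_l\rho_l>1$, which is precisely the adaptation you carry out. Your additional observations about the fixed-point conductances (e.g.\ that $b\geq a\geq 1$ keeps the level-one resistance diameter comparable to the lattice estimates, with the $\log l$ cancelling against $\rho_l\asymp\log l$ in $d=2$) fill in details the paper leaves implicit, and they are consistent with its argument.
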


Once we have this result, we have the convergence of the processes in the same way as for the Sierpinski gasket.

\begin{remark}
We can approximate the Sierpinski carpet by a sequence of Vicsek sets by taking the side lengths as $3^l$ and removing the appropriate pieces of checkerboard to approximate the carpet. We would hope to be able to recover Brownian motion on the carpet as a limit of these processes. In two-dimensions the existence of fixed points for the sequence holds as the approximating sequence consists of nested fractals.
Establishing convergence would need some proof of the limit $\rho_{3^l} \to \rho_{SC}$, which may be more challenging. (Again, an alternative approach for this example, in which the limiting process is described by a resistance form, would be to apply \cite[Theorem 1.3]{CHK}.)
\end{remark}

\section*{Acknowledgements}

This research was supported by JSPS Grant-in-Aid for Scientific Research (A) 22H00099, 23KK0050, (C) 19K03540, 24K06758, and the Research Institute for Mathematical Sciences, an International Joint Usage/Research Center located in Kyoto University.

\bibliographystyle{amsplain}
\bibliography{biblio}

\end{document}